%% file: main.tex
\documentclass[12pt]{article}

\usepackage{preprint-layout}
\usepackage{algorithm}
\usepackage{algpseudocodex}
\usepackage[notodo]{preprint-tools}
\usepackage{graphicx}
\graphicspath{{figures/}}

\title{Tangent-Space Multiscale Manifold Methods for Nonlinear Elliptic Problems}
\author{Mats G. Larson \mbox{ } Anna Persson}
\date{}

\begin{document}

\maketitle

\begin{abstract}
We introduce a tangent-space multiscale manifold method for nonlinear heterogeneous elliptic problems. The method represents the fine-scale solution by a nonlinear reconstruction of a coarse state. The ideal reconstruction eliminates fine scales through a constrained variational problem, and the computable reconstruction approximates this map by localized nonlinear patch solves blended with a partition of unity. Because the approximation set is a nonlinear manifold, the coarse equation is posed with tangent multiscale test functions. We also formulate a network-interpolated variant in which only the restricted patch outputs used by the partition-of-unity blend, together with their tangent actions, are approximated by local learned maps. For heterogeneous monotone nonlinear diffusion, we record the structural monotonicity, differentiability, patch-map regularity, and conditional perturbation estimates that separate the geometric stability mechanism from localization, residual, and optional learning defects. A rigorous a priori theory for the decay of the localization defect, and the resulting convergence rates in the coarse mesh size, is deferred to a separate analysis; here these defects are controlled conditionally and their decay is demonstrated numerically.
\end{abstract}

\medskip
\noindent\textbf{Keywords:} Multiscale problems; Numerical homogenization; Nonlinear elliptic problems; Neural networks; Rough coefficients

\input{sections/introduction_v7}
\input{sections/global_fine_scale_manifold_v4}
\input{sections/localized_reconstruction_v4}
\input{sections/network_interpolation_v4}
\input{sections/computable_coarse_problems_v4}
\input{sections/heterogeneous_nonlinear_diffusion_v3}
\input{sections/error_analysis_v5}
\input{sections/numerical_experiments_v0}
\appendix
\input{sections/appendix_extended_algorithm_v0}
\input{sections/acknowledgements_v1}
\input{sections/references_v4}

\input{sections/author_addresses_v1}

\end{document}

%% file: sections/introduction_v7.tex

\section{Introduction}
\label{sec:introduction}

\paragraph{Background.}
\label{par:introduction-background}
Multiscale elliptic problems with rapidly varying coefficients arise in porous media, composite materials, heterogeneous diffusion, and nonlinear continuum models. Direct resolution on the finest scale is often too expensive, while standard coarse finite element spaces do not contain the oscillatory response induced by the coefficient field. We focus on applications with general rough coefficient fields, without assumed periodicity, self-similarity, or scale separation. In this setting, classical analytical homogenization, which derives an effective equation from precisely such structural assumptions, does not apply. The method developed in this paper instead belongs to a class of numerical homogenization methods that construct problem-adapted coarse approximations computationally from the actual coefficient field and require no such structural assumptions. For nonlinear constitutive laws the fine-scale response is also state dependent. A useful coarse method must therefore retain the variational structure of the fine-scale problem, localize the fine-scale work, and keep the online unknowns on a low-dimensional coarse space; this coarse--fine viewpoint is closely related to variational multiscale methods, bubble-function enrichments, and residual-based stabilized finite element methods \cite{Hughes1995,HughesFeijooMazzeiQuincy1998,HughesSangalli2007,BrezziBristeauFrancaMalletRoge1992,BaiocchiBrezziFranca1993,BarredaMadureira2019}.

\paragraph{Localized Fine-Scale Elimination.}
\label{par:introduction-localized-fine-scale-elimination}
Classical multiscale finite element and heterogeneous multiscale methods incorporate local fine-scale information through adapted basis functions, cell problems, or effective coefficients \cite{HouWu1997,EfendievHou2009,Abdulle2009}. A complementary principle is to eliminate unresolved scales by a variational splitting of the fine finite element space and to approximate the fine-scale part by local problems. This localization idea appears already in the adaptive variational multiscale method of Larson and M{\aa}lqvist \cite{LarsonMalqvist2007}, where the fine-scale part is approximated by decoupled local problems in a slice space and controlled by a posteriori estimates. For linear elliptic problems, the subsequent work \cite{MP14} gave the localized corrector construction and a priori localization theory that led to the localized orthogonal decomposition framework; see, for example, \cite{AHP21,MaP21book}. Efficient implementations and related localized fine-scale elimination techniques are now well established for linear heterogeneous problems \cite{ENGWER2019123}. Related numerical-homogenization frameworks localize the fine-scale response in other ways: the constraint energy minimizing generalized multiscale finite element method builds exponentially decaying multiscale basis functions by local energy minimization \cite{CEL18}, while operator-adapted wavelets and the associated gamblet decomposition recover the fine-scale solution through an optimal-recovery, game-theoretic formulation \cite{Owhadi2017,OwhadiScovel2019}.

\paragraph{Nonlinear State Dependence.}
\label{par:introduction-nonlinear-state-dependence}
Nonlinear problems change the structure of fine-scale elimination: the unresolved correction is no longer a linear operator applied to a coarse basis function or residual, but depends on the current coarse state. Several multiscale frameworks have been extended to nonlinear problems. Multiscale finite element methods were developed for nonlinear elliptic problems in \cite{EGH04}, and the generalized multiscale finite element method for nonlinear elliptic equations in \cite{EGLP14}. Within the heterogeneous multiscale method, quasilinear elliptic homogenization problems were analyzed in \cite{AV14} and solved by an offline--online strategy in \cite{ABV14}. Within the localized orthogonal decomposition framework, corrector-based constructions have been given for semilinear elliptic \cite{HMP14_b} and parabolic \cite{MP18} problems, and for genuinely nonlinear diffusion, through linearization, in \cite{V21,KV25}. These approaches treat the state dependence in different ways. The nonlinear multiscale finite element method solves nonlinear local problems elementwise, as nonlinear analogues of basis maps with boundary data prescribed by the coarse function, tested against standard coarse functions, with a convergence theory set in the classical homogenization framework. The heterogeneous multiscale method solves linear cell problems with the macroscopic state frozen at quadrature points and relies on scale separation for the micro--macro coupling. The generalized multiscale finite element method employs state-parametrized linear spectral spaces computed offline. The nonlinear localized orthogonal decomposition methods compute linear correctors from a linearization of the equation in each iteration. The method developed here instead represents the complete eliminated fine scale by a single nonlinear map from coarse states to fine-scale corrections and solves the resulting coarse problem on the associated nonlinear manifold. As in the localized orthogonal decomposition framework, it requires neither periodicity nor scale separation of the coefficient.

\paragraph{Manifold Viewpoint.}
\label{par:introduction-manifold-viewpoint}
Let \(V_h\) be the fine finite element space, let \(V_H\subset V_h\) be a coarse finite element space, let \(\Pi_H:V_h\to V_H\) be a projection, and set \(V_f=\operatorname{ker}\Pi_H\). For a coarse state \(v_H\in V_H\), the ideal fine-scale correction \(v_f(v_H)\in V_f\) is defined by the constrained residual equation,
\begin{equation}
A(v_H+v_f(v_H);w_f)=L(w_f) \quad \forall w_f\in V_f \label{eq:introduction-global-correction}
\end{equation}
and the corresponding reconstruction map and manifold are defined by,
\begin{equation}
M_h(v_H)=v_H+v_f(v_H),\qquad \mathcal M_h=M_h(V_H)\subset V_h \label{eq:introduction-global-manifold}
\end{equation}
The fine finite element solution is represented exactly by this manifold: if \(u_h\) solves the fine problem, then \(u_h=M_h(\Pi_Hu_h)\). The approximation enters only when the global map \(M_h\) is replaced by a localized computable map.

\paragraph{Localized Complete Fine-Scale Reconstruction.}
\label{par:introduction-localized-reconstruction}
The practical method approximates \(M_h\) by localized patch maps. The use of local patches and a partition-of-unity split is reminiscent of the partition-of-unity finite element method (PUFEM) and generalized finite element method (GFEM), where local approximation spaces are combined into a global trial space \cite{MelenkBabuska1996,BabuskaMelenk1997}. Here, however, the split is used primarily to localize the fine-scale equations for the eliminated correction, rather than to prescribe an enriched global trial space; this viewpoint is also related to multiscale GFEM constructions based on local approximation spaces \cite{BabuskaLipton2011}. For each partition-of-unity function \(\varphi_{H,i}\), we solve a nonlinear fine-scale problem on an oversampled patch \(\omega_i^{\ell}\). The local input is the vector \(P_i v_H\) of coarse degrees of freedom used on that patch. Since the global blend only sees the support of \(\varphi_{H,i}\), only the restricted output \(R_i v_{f,i}^{\ell}(P_i v_H)\) on \(\omega_i^0=\operatorname{int}(\operatorname{supp}\varphi_{H,i})\) is needed. The localized reconstruction is defined by,
\begin{align}
M_h^{\ell}(v_H)
&=v_H+(I-\Pi_H)I_h\Bigl(
  \sum_{i\in\mathcal I}\varphi_{H,i}E_{i,0}^{\operatorname{ext}}
  R_i v_{f,i}^{\ell}(P_i v_H)
  \Bigr) \label{eq:introduction-localized-reconstruction}
\end{align}
where \(I_h\) is fine nodal interpolation and \(E_{i,0}^{\operatorname{ext}}\) is extension by zero from \(\omega_i^0\) to the global fine grid. The final projection \(I-\Pi_H\) returns the blended correction to the fine-scale space. Thus the patch solutions are local representatives of the complete nonlinear fine-scale correction, not nonlinear analogues of individual basis correctors.

\paragraph{Tangent Coarse Equation.}
\label{par:introduction-tangent-coarse-equation}
Because \(M_h^{\ell}(V_H)\) is a nonlinear manifold, the coarse residual must be tested with tangent multiscale functions. The patch-solved method seeks \(u_H^{\ell}\in V_H\) such that,
\begin{align}
A\bigl(M_h^{\ell}(u_H^{\ell});D M_h^{\ell}(u_H^{\ell})[\delta v_H]\bigr)
&=L\bigl(D M_h^{\ell}(u_H^{\ell})[\delta v_H]\bigr)
  \quad \forall \delta v_H\in V_H \label{eq:introduction-tangent-coarse-problem}
\end{align}
and then sets \(u_{\mathrm{ms}}^{\ell}=M_h^{\ell}(u_H^{\ell})\). The same tangent formulation applies at the ideal level with \(M_h\) in place of \(M_h^{\ell}\). The tangent test space is not an implementation detail; it is the natural variational test space on the nonlinear approximation manifold.

\paragraph{Local Interpolation Layer.}
\label{par:introduction-local-interpolation-layer}
The localized patch maps are finite-dimensional nonlinear solution operators. They may be evaluated by direct patch solves, or approximated offline and reused online. We suggest an optional network-interpolated method that replaces only the restricted local output maps and their tangent actions by learned maps, giving a reconstruction \(M_{h,\theta}^{\ell}\). This differs from global operator-learning approaches, which approximate maps between full function spaces \cite{LuJinKarniadakis2021,KovachkiLanthalerMishra2023}, from model-reduction strategies in which neural networks parametrize a global low-dimensional solution manifold \cite{BHKS21}, and from classical reduced-basis methodology, which builds a global reduced space for a full parameter family \cite{RozzaHuynhPatera2008,HesthavenRozzaStamm2016}. It is closer to learned localized numerical-homogenization surrogates \cite{KMP22,KMP23,KPU25}, but here the learned quantities are the support-restricted patch outputs that enter the partition-of-unity reconstruction. The deterministic localized method remains the primary object; learning is an optional interpolation layer.

\paragraph{Contributions.}
\label{par:introduction-contributions}
The contributions are as follows. First, we formulate an ideal nonlinear fine-scale elimination in which the coarse variable parametrizes the fine finite element solution through the manifold map \(M_h\). Second, we derive the tangent-space coarse equation associated with this nonlinear manifold. Third, we introduce the localized reconstruction \(M_h^{\ell}\), including the support-restricted patch outputs used in the partition-of-unity blend. Fourth, we give patch-solved and network-interpolated computable coarse problems in a common residual form. Fifth, for heterogeneous monotone nonlinear diffusion, we verify monotonicity, local Lipschitz continuity, tangent coercivity, finite-dimensional patch-map regularity, and establish a conditional perturbation estimate in which the chord curvature is absorbed into the stability constant while localization and reduced-residual defects remain on the right-hand side.

We emphasize that the present paper is deliberately methodological. In the error analysis, the localization defect \(\eta_{\mathrm{loc}}(H,\ell)\) and the learning defect enter only as quantities on the right-hand side, and the estimates are conditional on their smallness. A rigorous a priori theory for these defects---in particular the exponential decay of the localization error in the oversampling radius \(\ell\), and the resulting convergence rates in the coarse mesh size \(H\)---is \emph{not} established here; it is the subject of a separate, more theoretical companion manuscript. In the present paper the decay of the localization defect is instead demonstrated numerically. Accordingly, the contribution of this paper is the tangent-space manifold formulation, the computable localized and network-interpolated reconstructions, and the conditional perturbation mechanism that transfers reconstruction and residual estimates to the final multiscale error, rather than the localization theory itself.

\paragraph{Outline.}
\label{par:introduction-outline}
Section \ref{sec:global-fine-scale-manifold} introduces the ideal nonlinear reconstruction map, the fine-scale manifold, and the tangent-space coarse equation. Section \ref{sec:localized-reconstruction} defines the localized patch construction and the partition-of-unity reconstruction. Section \ref{sec:network-interpolation} formulates the network interpolation of restricted local patch outputs and tangent actions. Section \ref{sec:computable-coarse-problems} states the patch-solved and network-interpolated coarse problems in a common algebraic form. Section \ref{sec:heterogeneous-nonlinear-diffusion} verifies the structural assumptions for heterogeneous monotone nonlinear diffusion. Section \ref{sec:error-analysis} gives the geometric error split, the chord-curvature kickback, and the conditional localization and residual estimates that organize the error analysis. Section \ref{sec:numerical-experiments} presents numerical experiments for heterogeneous monotone nonlinear diffusion, covering the localization and coarse-mesh convergence of the patch-solved method, the network-interpolated solve, and a backward-Euler parabolic test.

%% file: sections/global_fine_scale_manifold_v4.tex

\section{Global Fine-Scale Manifold}
\label{sec:global-fine-scale-manifold}

This section defines the ideal nonlinear reconstruction map. It is global and therefore not intended for computation, but it identifies the fine-scale object that the localized method approximates. In the error analysis we later specialize to monotone equations; for the formulation of the method we keep only the assumptions needed to define the maps and tangent spaces below.

\paragraph{Meshes and Finite Element Spaces.}
\label{par:meshes-and-finite-element-spaces}
Let \(\Omega\) be a bounded polygonal or polyhedral domain. We are given two nested, shape-regular and quasi-uniform triangulations of \(\Omega\): a coarse triangulation \(\mathcal T_H\) with mesh-width parameter \(H\coloneqq\max_{T\in\mathcal T_H}\operatorname{diam}(T)\), and a fine triangulation \(\mathcal T_h\) with mesh-width parameter \(h\coloneqq\max_{K\in\mathcal T_h}\operatorname{diam}(K)\), where \(\mathcal T_h\) is a refinement of \(\mathcal T_H\) so that every coarse element is a union of fine elements and \(h\le H\). On these triangulations we use continuous piecewise-linear \(P_1\) Lagrange finite elements with homogeneous Dirichlet boundary conditions, and denote the resulting finite element spaces by \(V_H\) and \(V_h\). The nestedness of the triangulations yields the conforming inclusion \(V_H\subset V_h\) used throughout.

\paragraph{Fine-Scale Reference Problem.}
\label{par:fine-scale-reference-problem}
Let \(\|\cdot\|_V\) denote the energy norm used in the analysis. The reference problem is to find \(u_h\in V_h\) such that
\begin{equation}
A(u_h;v_h)=L(v_h) \quad \forall v_h\in V_h \label{eq:fine-scale-reference-problem}
\end{equation}
where \(A(\cdot;\cdot)\) is nonlinear in the first argument and linear in the second, while \(L\) is linear. We assume that \eqref{eq:fine-scale-reference-problem} is uniquely solvable and that the constrained fine-scale problems below are well posed. The structural conditions used for the model diffusion problem are stated in Section \ref{sec:heterogeneous-nonlinear-diffusion}.

\paragraph{Coarse--Fine Splitting.}
\label{par:coarse-fine-splitting}
Let \(\Pi_H:V_h\to V_H\) be a projection onto the coarse space \(V_H\), so that
\begin{equation}
\Pi_H v_H=v_H \quad \forall v_H\in V_H \label{eq:projection-identity-on-coarse-space}
\end{equation}
We assume that \(\Pi_H\) is stable in the energy norm,
\begin{equation}
\|\Pi_H v_h\|_V\le C_{\Pi}\|v_h\|_V \quad \forall v_h\in V_h \label{eq:projection-stability}
\end{equation}
with \(C_{\Pi}\) independent of the mesh sizes, and compatible with the local patch restrictions used in Section \ref{sec:localized-reconstruction}. Concrete admissible choices of \(\Pi_H\) are discussed in the remark below. The local stability constants needed by the perturbation argument are stated in Section \ref{sec:error-analysis}. We set
\begin{equation}
V_f=\operatorname{ker}\Pi_H \label{eq:fine-scale-space}
\end{equation}
as the fine-scale space. Then every \(v_h\in V_h\) has the decomposition
\begin{equation}
v_h=\Pi_H v_h +(v_h-\Pi_H v_h) \quad \Pi_H v_h\in V_H \quad v_h-\Pi_H v_h\in V_f \label{eq:coarse-fine-decomposition}
\end{equation}
where the first term is coarse and the second term belongs to \(V_f\). Thus \(V_f\) is the space of fine-scale functions that are invisible to the coarse projection.

\paragraph{Admissible Projections and Partitions of Unity.}
\label{par:admissible-choices}
The construction requires \(\Pi_H\) to be a projection satisfying the energy stability \eqref{eq:projection-stability}; the specific operator is otherwise unconstrained. Possible choices are, e.g., the \(L^2\)-orthogonal projection onto \(V_H\), whose \(H^1\)-stability on shape-regular, quasi-uniform meshes is classical \cite{BPS02}, or the Scott--Zhang quasi-interpolation operator \cite{ScottZhang1990}, which is a projection onto \(V_H\) that is both energy-stable and local. Similarly, the localized reconstruction of Section \ref{sec:localized-reconstruction} uses only a partition of unity subordinate to the coarse mesh; natural choices are the \(P_1\) coarse vertex hat functions \(\varphi_{H,z}=\Lambda_z\), whose active regions are the nodal stars, or piecewise-constant element indicators. The numerical experiments in Section \ref{sec:numerical-experiments} use the \(L^2\)-projection together with vertex-patch hats.

\paragraph{Global Fine-Scale Correction.}
\label{par:global-fine-scale-correction}
For each \(v_H\in V_H\), define \(v_f(v_H)\in V_f\) as the solution of
\begin{equation}
A(v_H+v_f(v_H);w_f)=L(w_f) \quad \forall w_f\in V_f \label{eq:global-fine-scale-correction}
\end{equation}
which enforces the residual equation on \(V_f\). The corresponding reconstruction map \(M_h:V_H\to V_h\) is
\begin{equation}
M_h(v_H)=v_H+v_f(v_H) \label{eq:global-reconstruction-map}
\end{equation}
and its image is
\begin{equation}
\mathcal{M}_h=M_h(V_H)\subset V_h \label{eq:global-nonlinear-manifold}
\end{equation}
which is the nonlinear fine-scale manifold. By construction, the residual of \(M_h(v_H)\) vanishes on \(V_f\) for every coarse state \(v_H\).

\begin{lem}[Coarse parametrization and tangent complement]
\label{lem:global-coarse-parametrization}
For every \(v_H\in V_H\), the global reconstruction satisfies
\begin{equation}
\Pi_HM_h(v_H)=v_H \label{eq:global-coarse-parametrization}
\end{equation}
If \(M_h\) is differentiable at \(v_H\), then
\begin{equation}
\Pi_HD M_h(v_H)[\delta v_H]=\delta v_H \quad \forall \delta v_H\in V_H \label{eq:global-tangent-coarse-parametrization}
\end{equation}
Consequently, the tangent space is a complement of \(V_f\),
\begin{equation}
V_h=V_f\oplus T_{M_h(v_H)}\mathcal M_h \label{eq:global-tangent-direct-sum}
\end{equation}
where \(T_{M_h(v_H)}\mathcal M_h\) is defined in \eqref{eq:global-tangent-space}.
\end{lem}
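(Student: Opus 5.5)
The plan is to read off the three claims directly from the definitions, in the order they are stated. No structural assumptions on \(A\) are needed beyond well-posedness of \eqref{eq:global-fine-scale-correction}. I will take \(T_{M_h(v_H)}\mathcal M_h\), as defined in \eqref{eq:global-tangent-space}, to be the range \(DM_h(v_H)[V_H]\) of the derivative.

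\emph{Coarse parametrization.} Fix \(v_H\in V_H\). By \eqref{eq:global-reconstruction-map}, \(M_h(v_H)=v_H+v_f(v_H)\) with \(v_f(v_H)\in V_f=\ker\Pi_H\) by construction. Linearity of \(\Pi_H\) together with \eqref{eq:projection-identity-on-coarse-space} gives
\[
\Pi_HM_h(v_H)=\Pi_Hv_H+\Pi_Hv_f(v_H)=v_H+0 ,
\]
which is \eqref{eq:global-coarse-parametrization}.

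\emph{Tangent identity.} Since \eqref{eq:global-coarse-parametrization} holds for all coarse states, the map \(\Pi_H\circ M_h\) is the identity on \(V_H\). Because \(\Pi_H\) is linear and bounded (all spaces are finite dimensional), the chain rule gives \(D(\Pi_H\circ M_h)(v_H)=\Pi_H\,DM_h(v_H)\). Differentiating the identity map in the direction \(\delta v_H\) then yields \eqref{eq:global-tangent-coarse-parametrization}. Equivalently, one can differentiate the difference quotient \(\Pi_H\bigl(M_h(v_H+t\delta v_H)-M_h(v_H)\bigr)/t=\delta v_H\) and pass to the limit \(t\to0\), using continuity of \(\Pi_H\).

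\emph{Direct sum.} Set \(T=DM_h(v_H)[V_H]\).
\begin{itemize}
\item \emph{Trivial intersection.} If \(w\in V_f\cap T\), write \(w=DM_h(v_H)[\delta v_H]\). Then \eqref{eq:global-tangent-coarse-parametrization} gives \(\delta v_H=\Pi_Hw=0\), so \(w=DM_h(v_H)[0]=0\). This step also shows that \(DM_h(v_H)\) is injective, so \(\dim T=\dim V_H\).
\item \emph{Spanning.} For \(v_h\in V_h\), let \(\delta v_H=\Pi_Hv_h\) and decompose
\[
v_h=DM_h(v_H)[\delta v_H]+\bigl(v_h-DM_h(v_H)[\delta v_H]\bigr).
\]
The second term lies in \(V_f\), because applying \(\Pi_H\) and using \eqref{eq:global-tangent-coarse-parametrization} gives \(\Pi_Hv_h-\delta v_H=0\).
\end{itemize}
Together these give \eqref{eq:global-tangent-direct-sum}. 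The same construction shows that \(I-DM_h(v_H)\Pi_H\) is the projection onto \(V_f\) along the tangent space.

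None of the steps is a real obstacle. The only point needing care is matching the tangent space with the paper's definition \eqref{eq:global-tangent-space}, which appears after the statement. If it is defined as the range of \(DM_h(v_H)\), the argument above applies verbatim. If it is instead defined via tangent vectors of curves in \(\mathcal M_h\), I would first check that these coincide with the range, using injectivity of \(DM_h(v_H)\) and the fact that \(\Pi_H\) restricted to \(\mathcal M_h\) inverts \(M_h\).
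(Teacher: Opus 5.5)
Your proof is correct and follows the paper's argument: the section property comes from \(v_f(v_H)\in\ker\Pi_H\), the tangent identity from differentiating it through the linear map \(\Pi_H\), and the trivial intersection from \(\delta v_H=\Pi_H z_h=0\); the paper also defines the tangent space as the range of \(DM_h(v_H)\), as you assumed. The only difference is the spanning step: the paper uses a dimension count from \(V_h=V_H\oplus V_f\) and the injectivity of \(DM_h(v_H)\), while your explicit splitting \(v_h=DM_h(v_H)[\Pi_Hv_h]+\bigl(v_h-DM_h(v_H)[\Pi_Hv_h]\bigr)\) needs no counting and directly exhibits \(I-DM_h(v_H)\Pi_H\) as the projection onto \(V_f\) along the tangent space.
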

\begin{proof}
Since \(v_f(v_H)\in V_f=\operatorname{ker}\Pi_H\),
\begin{equation}
\Pi_HM_h(v_H)=\Pi_Hv_H+\Pi_Hv_f(v_H)=v_H \label{eq:global-coarse-parametrization-proof}
\end{equation}
Differentiating \eqref{eq:global-coarse-parametrization} gives \eqref{eq:global-tangent-coarse-parametrization}. If \(z_h\in V_f\cap T_{M_h(v_H)}\mathcal M_h\), then \(z_h=D M_h(v_H)[\delta v_H]\) for some \(\delta v_H\in V_H\), and \eqref{eq:global-tangent-coarse-parametrization} gives
\begin{equation}
\delta v_H=\Pi_H z_h=0 \label{eq:global-direct-sum-injectivity-proof}
\end{equation}
Thus the intersection is trivial. Since \(V_h=V_H\oplus V_f\) by \eqref{eq:coarse-fine-decomposition} and \(D M_h(v_H)\) is injective by \eqref{eq:global-tangent-coarse-parametrization}, the dimensions add up and \eqref{eq:global-tangent-direct-sum} follows.
\end{proof}

\paragraph{Exact Representation.}
\label{par:exact-representation}
Let \(u_h\) solve \eqref{eq:fine-scale-reference-problem} and set \(u_H=\Pi_H u_h\). Then \(u_h-u_H\in V_f\), and testing \eqref{eq:fine-scale-reference-problem} with \(w_f\in V_f\) gives
\begin{equation}
A(u_H+(u_h-u_H);w_f)=L(w_f) \quad \forall w_f\in V_f \label{eq:exact-representation-fine-scale-equation}
\end{equation}
which is the constrained correction problem with coarse state \(u_H\). By uniqueness of that problem,
\begin{equation}
u_h-u_H=v_f(u_H) \quad u_h=M_h(u_H) \label{eq:exact-global-manifold-representation}
\end{equation}
so the exact fine-scale solution belongs to \(\mathcal{M}_h\). The localized method in Section \ref{sec:localized-reconstruction} approximates this map rather than the solution itself.

\paragraph{Tangent Reconstruction.}
\label{par:tangent-reconstruction}
Assume that \(v_f\) is differentiable at \(v_H\). For a coarse direction \(\delta v_H\in V_H\), the tangent reconstruction is
\begin{equation}
D M_h(v_H)[\delta v_H]=\delta v_H+D v_f(v_H)[\delta v_H] \label{eq:global-tangent-reconstruction}
\end{equation}
where \(D v_f(v_H)[\delta v_H]\in V_f\) solves the linearized fine-scale problem
\begin{equation}
A'(v_H+v_f(v_H))[\delta v_H+D v_f(v_H)[\delta v_H],w_f]=0 \quad \forall w_f\in V_f \label{eq:global-tangent-correction}
\end{equation}
and \(A'(u)[z,w]\) denotes the derivative of \(A(\cdot;w)\) at \(u\) in the direction \(z\). The tangent space to \(\mathcal{M}_h\) at \(M_h(v_H)\) is
\begin{equation}
T_{M_h(v_H)}\mathcal{M}_h=\{D M_h(v_H)[\delta v_H]:\delta v_H\in V_H\} \label{eq:global-tangent-space}
\end{equation}
By Lemma \ref{lem:global-coarse-parametrization}, this tangent space contains exactly one lift of each coarse direction and is complementary to \(V_f\).

\paragraph{Graph Geometry.}
\label{par:global-graph-geometry}
The map \(M_h(v_H)=v_H+v_f(v_H)\) identifies the ideal manifold as a nonlinear graph over the coarse space. Equivalently, it is a section of the affine fibration induced by \(\Pi_H\), selecting one representative in each fine-scale fiber \(v_H+V_f\). Figure \ref{fig:graph-manifold-tangent} illustrates this geometry and the associated tangent lift.

\begin{figure}[htbp]
\centering
\includegraphics[width=0.90\textwidth]{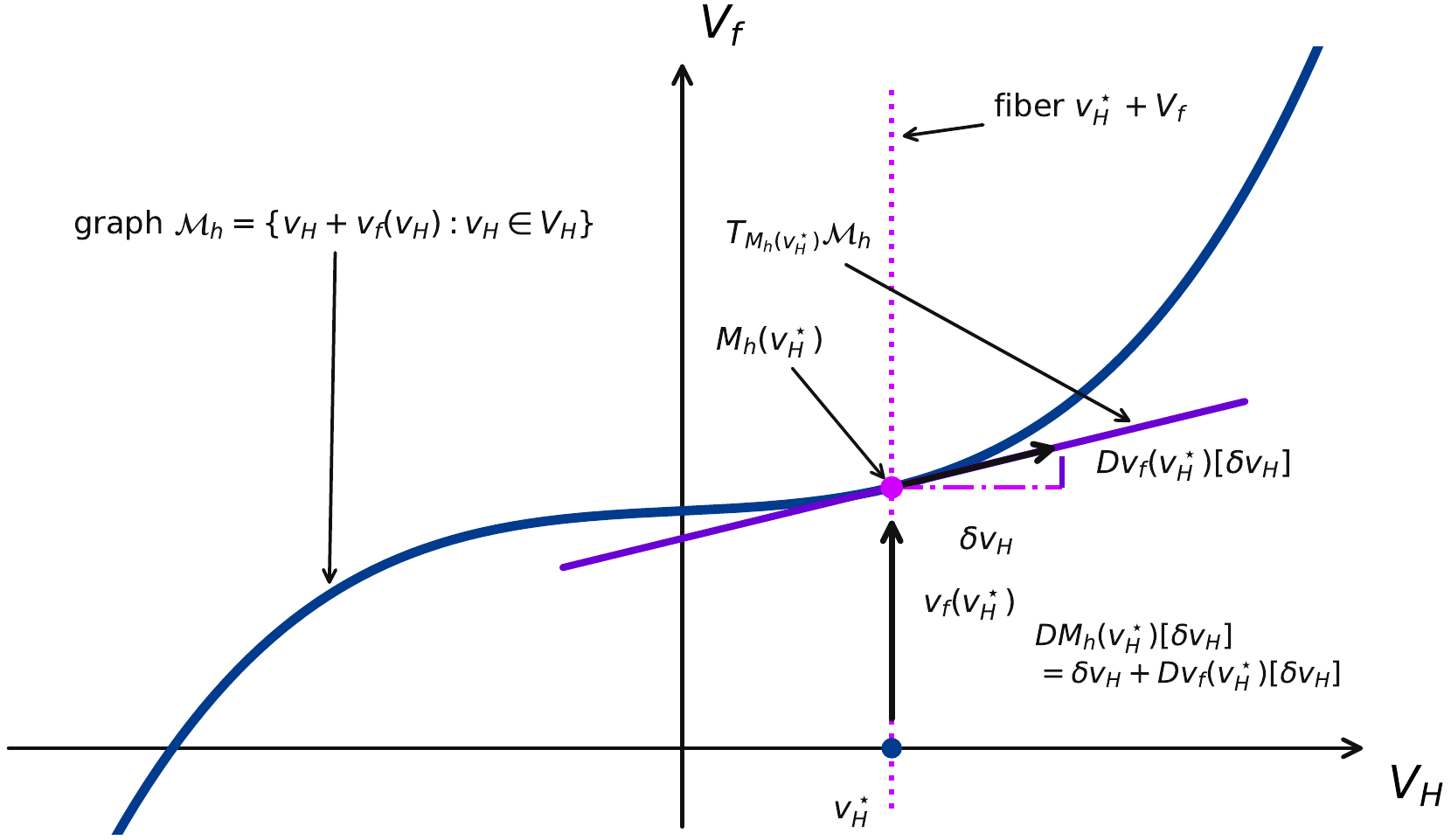}
\caption{Schematic illustration of the ideal multiscale manifold as a nonlinear graph over the coarse space \(V_H\). Each coarse state \(v_H\) determines a fine-scale fiber \(v_H+V_f\), and \(M_h(v_H)=v_H+v_f(v_H)\) selects one point on this fiber. The tangent space at \(M_h(v_H^\star)\) is generated by vectors \(\delta v_H+Dv_f(v_H^\star)[\delta v_H]\).}
\label{fig:graph-manifold-tangent}
\end{figure}

\paragraph{Ideal Manifold Equation.}
\label{par:ideal-manifold-equation}
The ideal coarse problem is to find \(u_H\in V_H\) such that
\begin{equation}
A(M_h(u_H);D M_h(u_H)[\delta v_H])=L(D M_h(u_H)[\delta v_H]) \quad \forall \delta v_H\in V_H \label{eq:global-manifold-problem}
\end{equation}
or equivalently, with \(u_h=M_h(u_H)\),
\begin{equation}
A(u_h;z_h)=L(z_h) \quad \forall z_h\in T_{u_h}\mathcal{M}_h \label{eq:global-tangent-residual-equation}
\end{equation}

\begin{prop}[Equivalence with the fine-scale problem]
\label{prop:ideal-manifold-equivalence}
Assume that the constrained correction problem \eqref{eq:global-fine-scale-correction} is uniquely solvable and that \(M_h\) is differentiable at the relevant coarse states. If \(\widehat u_H\in V_H\) solves \eqref{eq:global-manifold-problem}, then \(\widehat u_h=M_h(\widehat u_H)\) solves the fine-scale problem \eqref{eq:fine-scale-reference-problem}. Conversely, if \(u_h\) solves \eqref{eq:fine-scale-reference-problem} and \(u_H=\Pi_Hu_h\), then \(u_h=M_h(u_H)\) and \(u_H\) solves \eqref{eq:global-manifold-problem}.
\end{prop}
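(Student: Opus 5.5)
The plan is to use Lemma \ref{lem:global-coarse-parametrization} and the defining property of \(v_f\): the residual of \(M_h(v_H)\) vanishes on \(V_f\), and the tangent space complements \(V_f\). So a residual that vanishes on both pieces vanishes on all of \(V_h\).

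\emph{Forward direction.} Let \(\widehat u_H\) solve \eqref{eq:global-manifold-problem}, set \(\widehat u_h=M_h(\widehat u_H)\), and define the linear functional \(r(z_h)=L(z_h)-A(\widehat u_h;z_h)\) on \(V_h\). This is linear because \(A\) is linear in its second argument.
\begin{itemize}
\item By \eqref{eq:global-fine-scale-correction}, \(r(w_f)=0\) for all \(w_f\in V_f\).
\item By \eqref{eq:global-manifold-problem}, equivalently \eqref{eq:global-tangent-residual-equation}, \(r(z_h)=0\) for all \(z_h\in T_{\widehat u_h}\mathcal M_h\).
\end{itemize}
The direct sum \eqref{eq:global-tangent-direct-sum} lets us write any \(v_h\in V_h\) as \(v_h=w_f+z_h\) with \(w_f\in V_f\) and \(z_h\in T_{\widehat u_h}\mathcal M_h\). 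Linearity then gives \(r(v_h)=0\), so \(\widehat u_h\) solves \eqref{eq:fine-scale-reference-problem}.

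This is the only step that needs care: we need \(V_h=V_f\oplus T\mathcal M_h\), not just a trivial intersection. That comes from the injectivity of \(DM_h(\widehat u_H)\) and the dimension count in the lemma, which uses the assumed differentiability at \(\widehat u_H\).

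An explicit decomposition is also available. Given \(v_h\), set \(\delta v_H=\Pi_H v_h\) and \(z_h=DM_h(\widehat u_H)[\delta v_H]\). Then \(\Pi_H z_h=\delta v_H\) by \eqref{eq:global-tangent-coarse-parametrization}, so \(v_h-z_h\in V_f\). This avoids the dimension argument altogether.

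\emph{Converse direction.} The paragraph on exact representation already gives \(u_h=M_h(u_H)\) via \eqref{eq:exact-global-manifold-representation}, using unique solvability of the constrained problem. Since \(u_h\) satisfies \eqref{eq:fine-scale-reference-problem} for every test function in \(V_h\), it does so in particular for \(DM_h(u_H)[\delta v_H]\in V_h\). Substituting \(u_h=M_h(u_H)\) then yields \eqref{eq:global-manifold-problem}.

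No obstacle is expected here beyond invoking differentiability of \(M_h\) at \(u_H\), which is needed so that the test functions are defined.
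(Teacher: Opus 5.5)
Your proposal is correct and follows the paper's proof essentially verbatim. The forward direction is the same: the residual at \(\widehat u_h=M_h(\widehat u_H)\) vanishes on \(V_f\) by construction and on \(T_{\widehat u_h}\mathcal M_h\) by the manifold equation, hence on all of \(V_h=V_f\oplus T_{\widehat u_h}\mathcal M_h\). The converse is likewise the exact representation plus testing the fine-scale equation with the tangent functions. Your explicit splitting \(v_h=\bigl(v_h-DM_h(\widehat u_H)[\Pi_Hv_h]\bigr)+DM_h(\widehat u_H)[\Pi_Hv_h]\) is a small but genuine simplification, since it gives the needed decomposition directly from \eqref{eq:global-tangent-coarse-parametrization} without the dimension count.
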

\begin{proof}
Let \(\widehat u_H\) solve \eqref{eq:global-manifold-problem} and set \(\widehat u_h=M_h(\widehat u_H)\). The definition of \(M_h\) gives
\begin{equation}
A(\widehat u_h;w_f)=L(w_f) \quad \forall w_f\in V_f \label{eq:ideal-equivalence-fine-residual}
\end{equation}
and \eqref{eq:global-manifold-problem} gives
\begin{equation}
A(\widehat u_h;z_h)=L(z_h) \quad \forall z_h\in T_{\widehat u_h}\mathcal M_h \label{eq:ideal-equivalence-tangent-residual}
\end{equation}
By \eqref{eq:global-tangent-direct-sum}, every \(v_h\in V_h\) can be written uniquely as \(v_h=w_f+z_h\) with \(w_f\in V_f\) and \(z_h\in T_{\widehat u_h}\mathcal M_h\). Since the residual is linear in the test function,
\begin{equation}
A(\widehat u_h;v_h)=L(v_h) \quad \forall v_h\in V_h \label{eq:ideal-equivalence-full-residual}
\end{equation}
Thus \(\widehat u_h\) solves \eqref{eq:fine-scale-reference-problem}. The converse follows from \eqref{eq:exact-global-manifold-representation} and by testing the fine-scale equation with the tangent functions \(D M_h(u_H)[\delta v_H]\).
\end{proof}

\paragraph{Mean-Value Identity.}
\label{par:mean-value-identity}
The tangent formulation is needed because \(\mathcal{M}_h\) is not a linear space. If \(M_h\) is differentiable along the segment from \(w_H\) to \(v_H\), then
\begin{equation}
M_h(v_H)-M_h(w_H)=\int_0^1 D M_h(w_H+s(v_H-w_H))[v_H-w_H] \,ds \label{eq:mean-value-identity}
\end{equation}
which replaces the linear-space difference argument used in standard Galerkin orthogonality.

%% file: sections/localized_reconstruction_v4.tex

\section{Localized Reconstruction}
\label{sec:localized-reconstruction}

This section replaces the global correction map \(M_h\) from Section \ref{sec:global-fine-scale-manifold} by a computable localized map. The construction solves nonlinear residual equations on oversampled patches, blends only the active parts of the local corrections by a coarse partition of unity, and projects the blended correction back to \(V_f\). At this stage the only required assumptions are well-posedness and differentiability of the local patch maps at the states where they are evaluated.

\paragraph{Patch Spaces and Local Coarse Data.}
\label{par:patch-spaces}
Let \(\{\varphi_{H,i}\}_{i\in\mathcal I}\) be a partition of unity subordinate to the coarse mesh \(\mathcal T_H\), with each support \(\operatorname{supp}\varphi_{H,i}\) a union of coarse elements (see the remark in Section \ref{sec:global-fine-scale-manifold} for admissible choices). Patches are built from coarse-element neighborhoods. For a subdomain \(\omega\subset\Omega\) that is a union of coarse elements, one coarse-element layer is added by
\begin{subequations}\label{eq:element-neighborhood}
\begin{align}
\mathsf{N}(\omega)&=\operatorname{int}\Bigl(\bigcup\{\overline{T}: T\in\mathcal T_H,\ \overline{T}\cap\overline{\omega}\ne\emptyset\}\Bigr),\\ \mathsf{N}^{0}(\omega)&=\omega,\\ \mathsf{N}^{\ell}(\omega)&=\mathsf{N}\bigl(\mathsf{N}^{\ell-1}(\omega)\bigr)\ \ (\ell\ge1) 
\end{align}
\end{subequations}
The active region of patch \(i\) is
\begin{equation}
\omega_i^{0}=\operatorname{int}\bigl(\operatorname{supp}\varphi_{H,i}\bigr) \label{eq:partition-support-region}
\end{equation}
and the oversampled patch of order \(\ell\ge0\) is its \(\ell\)-layer coarse-element neighborhood,
\begin{equation}
\omega_i^{\ell}=\mathsf{N}^{\ell}(\omega_i^{0}) \label{eq:oversampled-patch}
\end{equation}
Each \(\omega_i^{\ell}\) is a union of coarse elements with \(\omega_i^{0}\subset\omega_i^{\ell}\subset\omega_i^{\ell+1}\), and \(\ell\) counts the coarse-element layers added beyond the active region. We fix a basis \(\{\Phi_j\}_{j=1}^{N_H}\) of \(V_H\) and define the local coarse index set by
\begin{equation}
\mathcal I_i^{\ell}=\{j\in\{1,\ldots,N_H\}:\operatorname{supp}\Phi_j\cap\omega_i^{\ell}\ne\emptyset\} \label{eq:local-coarse-index-set}
\end{equation}
If \(v_H=\sum_{j=1}^{N_H}c_j\Phi_j\), then the local restriction operator is
\begin{equation}
P_i v_H=(c_j)_{j\in\mathcal I_i^{\ell}}\in\mathbb R^{n_{H,i}} \quad n_{H,i}=|\mathcal I_i^{\ell}| \label{eq:local-coarse-restriction-operator}
\end{equation}
Thus \(P_i v_H\) determines \(v_H\) on \(\omega_i^{\ell}\). The local fine-scale space is
\begin{equation}
V_{f,i}^{\ell}=\bigl\{v_f\in V_f: \operatorname{supp} v_f\subset \overline{\omega_i^{\ell}},\ v_f=0\ \text{on}\ \partial\omega_i^{\ell}\bigr\} \label{eq:local-fine-scale-space}
\end{equation}
so that the local corrections satisfy homogeneous Dirichlet conditions on the artificial patch boundary. We write \(E_i^{\operatorname{ext}}\) for extension by zero from \(\omega_i^{\ell}\) to the global fine grid. The partition of unity is active only on the star \(\omega_i^{0}\subset\omega_i^{\ell}\) defined in \eqref{eq:partition-support-region}. Let \(R_i\) denote restriction to \(\omega_i^{0}\), and let \(W_{f,i}^{0}\) be the corresponding restricted output space,
\begin{equation}
R_i\colon V_{f,i}^{\ell}\to W_{f,i}^{0} \quad W_{f,i}^{0}=R_i V_{f,i}^{\ell} \label{eq:restricted-output-space}
\end{equation}
We write \(E_{i,0}^{\operatorname{ext}}\) for extension by zero from \(\omega_i^{0}\) to the global fine grid.

\paragraph{Local Patch Problems.}
\label{par:local-patch-problems}
For \(v_H\in V_H\), the local correction \(v_{f,i}^{\ell}(P_i v_H)\in V_{f,i}^{\ell}\) is defined by
\begin{align}
A_{\omega_i^{\ell}}\bigl(v_H+v_{f,i}^{\ell}(P_i v_H);w_{f,i}\bigr)
&=L_{\omega_i^{\ell}}(w_{f,i}) \quad \forall w_{f,i}\in V_{f,i}^{\ell} \label{eq:patch-correction-problem}
\end{align}
where \(A_{\omega_i^{\ell}}\) and \(L_{\omega_i^{\ell}}\) are the restrictions of the global forms to the patch. Although \(v_H\) appears in the residual, only its patch restriction is used, and this restriction is determined by \(P_i v_H\) through \eqref{eq:local-coarse-restriction-operator}. This is why the local map can be viewed as a finite-dimensional map of \(n_{H,i}\) coarse variables.

\paragraph{Localized Manifold Map.}
\label{par:localized-manifold-map}
Only the restriction of the patch correction to \(\omega_i^{0}\) enters the global blend. Indeed,
\begin{align}
\varphi_{H,i}E_i^{\operatorname{ext}}v_{f,i}^{\ell}(P_i v_H)
&=\varphi_{H,i}E_{i,0}^{\operatorname{ext}}R_i v_{f,i}^{\ell}(P_i v_H) \quad \forall v_H\in V_H \label{eq:partition-support-restriction}
\end{align}
The localized reconstruction map is therefore
\begin{align}
M_h^{\ell}(v_H)
&=v_H+(I-\Pi_H)I_h\Bigl(\sum_{i\in\mathcal I}\varphi_{H,i}E_{i,0}^{\operatorname{ext}}R_i v_{f,i}^{\ell}(P_i v_H)\Bigr) \label{eq:localized-reconstruction-map}
\end{align}
where \(I_h\) denotes a stable fine-grid interpolation or projection. The product \(\varphi_{H,i}E_{i,0}^{\operatorname{ext}}R_i v_{f,i}^{\ell}\) is generally not an element of \(V_h\), and \(I_h\) places the partition-of-unity blend in the fine space. The projection \(I-\Pi_H\) is essential because the blended correction need not belong to \(V_f\). The localized nonlinear trial manifold is
\begin{equation}
\mathcal{M}_h^{\ell}=M_h^{\ell}(V_H)\subset V_h \label{eq:localized-trial-manifold}
\end{equation}
which is the computable counterpart of the ideal manifold \(\mathcal{M}_h\).

\paragraph{Localized Tangent Map.}
\label{par:localized-tangent-map}
For a coarse direction \(\delta v_H\in V_H\), differentiation of \eqref{eq:localized-reconstruction-map} gives
\begin{align}
D M_h^{\ell}(v_H)[\delta v_H]
&=\delta v_H+(I-\Pi_H)I_h\Bigl(\sum_{i\in\mathcal I}\varphi_{H,i}E_{i,0}^{\operatorname{ext}}R_i D v_{f,i}^{\ell}(P_i v_H)[P_i\delta v_H]\Bigr) \label{eq:localized-tangent-reconstruction}
\end{align}
The local tangent correction \(D v_{f,i}^{\ell}(P_i v_H)[P_i\delta v_H]\in V_{f,i}^{\ell}\) is obtained by differentiating \eqref{eq:patch-correction-problem}. It solves
\begin{align}
A_{\omega_i^{\ell}}'\bigl(v_H+v_{f,i}^{\ell}(P_i v_H)\bigr)\bigl[\delta v_H+D v_{f,i}^{\ell}(P_i v_H)[P_i\delta v_H],w_{f,i}\bigr]
&=0 \quad \forall w_{f,i}\in V_{f,i}^{\ell} \label{eq:localized-tangent-patch-problem}
\end{align}
where the derivative is taken with respect to the first argument of \(A_{\omega_i^{\ell}}\). As for the state correction, only the restriction of this tangent correction to \(\omega_i^{0}\) is used in the global tangent reconstruction.

\begin{lem}[Localized coarse parametrization]
\label{lem:localized-coarse-parametrization}
For every \(v_H\in V_H\), the localized reconstruction satisfies
\begin{equation}
\Pi_HM_h^{\ell}(v_H)=v_H \label{eq:localized-coarse-parametrization}
\end{equation}
If \(M_h^{\ell}\) is differentiable at \(v_H\), then
\begin{equation}
\Pi_HD M_h^{\ell}(v_H)[\delta v_H]=\delta v_H \quad \forall \delta v_H\in V_H \label{eq:localized-tangent-coarse-parametrization}
\end{equation}
Consequently,
\begin{equation}
V_h=V_f\oplus T_{M_h^{\ell}(v_H)}\mathcal M_h^{\ell} \label{eq:localized-tangent-direct-sum}
\end{equation}
whenever the tangent space is defined.
\end{lem}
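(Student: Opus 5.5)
The plan mirrors the proof of Lemma \ref{lem:global-coarse-parametrization}, with the graph structure now supplied by the outer factor \(I-\Pi_H\) in \eqref{eq:localized-reconstruction-map} instead of by the constraint \(v_f(v_H)\in V_f\).

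\emph{Step 1: the coarse parametrization.} Write \(g(v_H)=I_h\bigl(\sum_{i\in\mathcal I}\varphi_{H,i}E_{i,0}^{\operatorname{ext}}R_i v_{f,i}^{\ell}(P_i v_H)\bigr)\in V_h\). Since \(\Pi_H\) is a projection onto \(V_H\), \eqref{eq:projection-identity-on-coarse-space} gives \(\Pi_H^2=\Pi_H\). Hence
\[
\Pi_H(I-\Pi_H)g(v_H)=\Pi_Hg(v_H)-\Pi_H^2g(v_H)=0 ,
\]
so the correction term lies in \(V_f\). Applying \(\Pi_H\) to \eqref{eq:localized-reconstruction-map} and using \(\Pi_Hv_H=v_H\) then yields \eqref{eq:localized-coarse-parametrization}. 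This step uses neither the patch problems nor any property of \(I_h\) other than that its range is \(V_h\).

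\emph{Step 2: the tangent parametrization.} I would argue directly from the tangent formula \eqref{eq:localized-tangent-reconstruction}, which has the same structure: \(\delta v_H\) plus \((I-\Pi_H)\) applied to an element of \(V_h\). The same idempotency computation gives \eqref{eq:localized-tangent-coarse-parametrization}. Alternatively, one can differentiate the identity \eqref{eq:localized-coarse-parametrization}. This is legitimate because \(\Pi_H\) is linear and bounded on the finite-dimensional space \(V_h\), so it commutes with the derivative.

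\emph{Step 3: the direct sum.} Suppose \(z_h\in V_f\cap T_{M_h^{\ell}(v_H)}\mathcal M_h^{\ell}\), with the tangent space defined as the range of \(DM_h^{\ell}(v_H)\) by analogy with \eqref{eq:global-tangent-space}. Write \(z_h=DM_h^{\ell}(v_H)[\delta v_H]\). Then Step 2 gives \(\delta v_H=\Pi_Hz_h=0\), so \(z_h=0\) and the intersection is trivial. Step 2 also shows that \(DM_h^{\ell}(v_H)\) is injective, so the tangent space has dimension \(\dim V_H\). By \eqref{eq:coarse-fine-decomposition}, \(\dim V_h=\dim V_H+\dim V_f\), and \eqref{eq:localized-tangent-direct-sum} follows by counting dimensions.

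\emph{Main obstacle.} There is no real analytical obstacle. The one point needing care is Step 1: one must recognize that \(g(v_H)\) itself is generally not in \(V_f\), because the blending and the interpolation \(I_h\) destroy membership in \(\ker\Pi_H\). It is only the explicit factor \(I-\Pi_H\) that makes the parametrization exact, and the argument must rest on that factor.
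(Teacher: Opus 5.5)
Your proof is correct and follows the paper's argument. The factor \(I-\Pi_H\) places the blended correction in \(V_f=\ker\Pi_H\), the tangent identity is obtained by differentiating \eqref{eq:localized-coarse-parametrization}, and the direct sum follows from the same injectivity and dimension count as in Lemma \ref{lem:global-coarse-parametrization}. Note that the paper uses your alternative route for the tangent identity, which needs only differentiability of \(M_h^{\ell}\) at \(v_H\); reading it off \eqref{eq:localized-tangent-reconstruction} additionally presumes the chain-rule formula through the patch maps.
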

\begin{proof}
The added correction in \eqref{eq:localized-reconstruction-map} lies in \(V_f\) because of the factor \(I-\Pi_H\). Therefore,
\begin{equation}
\Pi_HM_h^{\ell}(v_H)=\Pi_Hv_H=v_H \label{eq:localized-coarse-parametrization-proof}
\end{equation}
Differentiating \eqref{eq:localized-coarse-parametrization} gives \eqref{eq:localized-tangent-coarse-parametrization}. The direct-sum statement follows as in Lemma \ref{lem:global-coarse-parametrization}.
\end{proof}

\paragraph{Evaluation Procedure.}
\label{par:localized-evaluation-procedure}
The deterministic localized reconstruction can be evaluated patchwise. For a given coarse state \(v_H\), one solves \eqref{eq:patch-correction-problem} independently on all patches, restricts the local outputs to \(\omega_i^0\), blends the restricted outputs with \(\varphi_{H,i}\), applies \(I_h\), and finally projects with \(I-\Pi_H\). For tangent actions, one reuses the local state solutions, solves the linearized patch problems \eqref{eq:localized-tangent-patch-problem} for the required local coarse directions, and assembles the result by \eqref{eq:localized-tangent-reconstruction}.

\paragraph{Use in the Coarse Problem.}
\label{par:localized-use-in-coarse-problem}
The localized map \(M_h^{\ell}\) and its tangent action \(D M_h^{\ell}\) define the deterministic computable method. The corresponding tangent-space coarse equation is stated in Section \ref{sec:computable-coarse-problems}; the only change relative to the ideal equation in Section \ref{sec:global-fine-scale-manifold} is the replacement of \(M_h\) by \(M_h^{\ell}\).

%% file: sections/network_interpolation_v4.tex

\section{Network Interpolation of Local Patch Maps}
\label{sec:network-interpolation}

The localized patch-solved method from Section \ref{sec:localized-reconstruction} is already computable. This section describes an optional acceleration in which the restricted local patch-output operators are replaced by learned interpolants. The restriction is important: since the global reconstruction only uses the correction on the active support \(\omega_i^0\), no network output is required on the oversampling layer \(\omega_i^{\ell}\setminus\omega_i^0\). Because the coarse equation uses tangent test functions, the relevant approximation target is the patch map together with its derivative.

\paragraph{Restricted Patch Operators.}
\label{par:restricted-patch-operators}
For patch \(i\), let \(R_i\) and \(W_{f,i}^{0}\) be defined as in Section \ref{sec:localized-reconstruction}. After choosing a local basis \(\{\psi_{i,r}^{0}\}_{r=1}^{n_{0,i}}\) of \(W_{f,i}^{0}\), we identify the restricted output space with coefficient vectors by
\begin{equation}
w_i^0=\sum_{r=1}^{n_{0,i}}\gamma_{i,r}\psi_{i,r}^{0} \quad \longleftrightarrow \quad \gamma_i=(\gamma_{i,1},\ldots,\gamma_{i,n_{0,i}})\in\mathbb R^{n_{0,i}} \label{eq:restricted-output-coordinates}
\end{equation}
The operator needed by the global reconstruction is
\begin{align}
\mathcal G_{f,i}^{\ell}&\colon \mathbb R^{n_{H,i}}\to W_{f,i}^{0} \label{eq:local-patch-map-domain}\\
\mathcal G_{f,i}^{\ell}(P_i v_H)&=R_i v_{f,i}^{\ell}(P_i v_H) \quad \forall v_H\in V_H \label{eq:local-patch-map}
\end{align}
Thus the full patch problem on \(\omega_i^{\ell}\) may be used to generate the local output, but values outside \(\omega_i^{0}\) are not part of the interpolated data.

\paragraph{Network Interpolants.}
\label{par:network-interpolants}
For each patch, we replace \(\mathcal G_{f,i}^{\ell}\) by a learned interpolant with parameter vector \(\theta_i\),
\begin{equation}
\mathcal N_{f,i}^{\ell,\theta_i}\colon \mathbb R^{n_{H,i}}\to \mathbb R^{n_{0,i}} \label{eq:network-patch-map}
\end{equation}
where the output is interpreted as nodal values of a fine finite element function on \(\omega_i^{0}\). We define the learned restricted correction by
\begin{equation}
g_{f,i}^{\ell,\theta_i}(P_i v_H)=\mathcal N_{f,i}^{\ell,\theta_i}(P_i v_H) \label{eq:learned-patch-correction}
\end{equation}
With \(\theta=(\theta_i)_{i\in\mathcal I}\), the learned global reconstruction is
\begin{align}
M_{h,\theta}^{\ell}(v_H)
&=v_H+(I-\Pi_H)I_h
\Bigl(\sum_{i\in\mathcal I}\varphi_{H,i}E_{i,0}^{\operatorname{ext}}
       g_{f,i}^{\ell,\theta_i}(P_i v_H)\Bigr) \label{eq:learned-reconstruction-map}
\end{align}
It is a network-interpolated approximation of \(M_h^{\ell}\), while \(M_h^{\ell}\) is the localized approximation of the ideal map \(M_h\). As in Lemma \ref{lem:localized-coarse-parametrization}, the projection \(I-\Pi_H\) gives
\begin{equation}
\Pi_HM_{h,\theta}^{\ell}(v_H)=v_H \quad \forall v_H\in V_H \label{eq:learned-coarse-parametrization}
\end{equation}
In particular, the network output is not required to lie in \(W_{f,i}^{0}=R_iV_{f,i}^{\ell}\): arbitrary nodal values on \(\omega_i^{0}\) are admissible, since the projection \(I-\Pi_H\) restores membership in \(V_f\) after the partition-of-unity blend.

\paragraph{Learned Tangent Reconstruction.}
\label{par:learned-tangent-reconstruction}
The tangent map is obtained by differentiating the learned restricted patch maps. For \(\delta v_H\in V_H\),
\begin{align}
D M_{h,\theta}^{\ell}(v_H)[\delta v_H]
&=\delta v_H+(I-\Pi_H)I_h
\Bigl(\sum_{i\in\mathcal I}\varphi_{H,i}E_{i,0}^{\operatorname{ext}}
       D\mathcal N_{f,i}^{\ell,\theta_i}(P_i v_H)[P_i\delta v_H]\Bigr) \label{eq:learned-tangent-reconstruction}
\end{align}
This gives tangent test functions without solving linearized patch problems online. The derivatives of the network maps can be evaluated by automatic differentiation. The derivative is not merely an implementation detail: tangent-space consistency requires accuracy of both \(\mathcal N_{f,i}^{\ell,\theta_i}\) and \(D\mathcal N_{f,i}^{\ell,\theta_i}\) on the admissible local parameter set.

\paragraph{Training Data and Loss.}
\label{par:training-data-and-loss}
A minimal local data set consists of local coarse states and the corresponding restricted patch corrections,
\begin{equation}
\mathcal D_i^{\ell}=\bigl\{(P_i v_H^m,R_i v_{f,i}^{\ell}(P_i v_H^m))\bigr\}_{m=1}^{N_{\operatorname{train}}} \label{eq:patch-training-data}
\end{equation}
When tangent data are available, let \(\rho_{i,q}\in\mathbb R^{n_{H,i}}\), \(q=1,\ldots,Q_i\), denote local coarse directions used to sample derivative information. A derivative-enhanced local loss is
\begin{align}
\mathcal L_i(\theta_i)
&=\sum_{m=1}^{N_{\operatorname{train}}}\bigl\|\mathcal N_{f,i}^{\ell,\theta_i}(P_i v_H^m)-\mathcal G_{f,i}^{\ell}(P_i v_H^m)\bigr\|_{H^1(\omega_i^{0})}^2 \label{eq:network-training-loss-state}\\
&\quad+\lambda\sum_{m=1}^{N_{\operatorname{train}}}\sum_{q=1}^{Q_i}\bigl\|D\mathcal N_{f,i}^{\ell,\theta_i}(P_i v_H^m)[\rho_{i,q}]-D\mathcal G_{f,i}^{\ell}(P_i v_H^m)[\rho_{i,q}]\bigr\|_{H^1(\omega_i^{0})}^2 \label{eq:network-training-loss-tangent}
\end{align}
with \(\lambda\ge0\). Choosing \(\lambda=0\) gives the state-only loss. Section \ref{sec:error-analysis} does not depend on a particular training algorithm. It uses either the realized reconstruction error of the learned map or the reduced residual obtained after solving the learned coarse problem; tangent approximation errors enter when one compares learned and deterministic residuals or uses a tangent surrogate that is not the derivative of the same learned reconstruction.

%% file: sections/computable_coarse_problems_v4.tex

\section{Computable Coarse Problems}
\label{sec:computable-coarse-problems}

This section gives the finite-dimensional problems solved online. Sections \ref{sec:localized-reconstruction} and \ref{sec:network-interpolation} define the reconstruction maps; here we state the tangent-space coarse equation, the residual used in computation, and the work required for one residual evaluation.

\paragraph{Generic Reconstruction.}
\label{par:computable-generic-reconstruction}
We use \(\widetilde M_h^{\ell}\) for either the patch-solved or the network-interpolated reconstruction,
\begin{equation}
\widetilde M_h^{\ell}=M_h^{\ell} \quad \text{or} \quad \widetilde M_h^{\ell}=M_{h,\theta}^{\ell} \label{eq:generic-reconstruction-choices}
\end{equation}
The corresponding restricted local output is denoted by \(\widetilde g_i^{\ell}\). For \(\mu\in\mathbb R^{n_{H,i}}\),
\begin{align}
\widetilde g_i^{\ell}(\mu)
&=\begin{cases}
R_i v_{f,i}^{\ell}(\mu), & \text{for the patch-solved method}\\
\mathcal N_{f,i}^{\ell,\theta_i}(\mu), & \text{for the network-interpolated method}
\end{cases} \label{eq:generic-restricted-output}
\end{align}
With this notation both reconstructions have the common form
\begin{align}
\widetilde M_h^{\ell}(v_H)
&=v_H+(I-\Pi_H)I_h\Bigl(\sum_{i\in\mathcal I}\varphi_{H,i}E_{i,0}^{\operatorname{ext}}\widetilde g_i^{\ell}(P_i v_H)\Bigr) \label{eq:generic-reconstruction-map}
\end{align}
and the tangent action is
\begin{align}
D\widetilde M_h^{\ell}(v_H)[\delta v_H]
&=\delta v_H+(I-\Pi_H)I_h\Bigl(\sum_{i\in\mathcal I}\varphi_{H,i}E_{i,0}^{\operatorname{ext}}D\widetilde g_i^{\ell}(P_i v_H)[P_i\delta v_H]\Bigr) \label{eq:generic-tangent-map}
\end{align}
For the patch-solved method, \(D\widetilde g_i^{\ell}\) is obtained from the linearized patch problem \eqref{eq:localized-tangent-patch-problem}. For the network-interpolated method, \(D\widetilde g_i^{\ell}\) is the derivative of the learned local map. In both cases,
\begin{equation}
\Pi_H\widetilde M_h^{\ell}(v_H)=v_H \quad \Pi_HD\widetilde M_h^{\ell}(v_H)[\delta v_H]=\delta v_H \label{eq:generic-coarse-parametrization}
\end{equation}
whenever the derivative is defined.

\paragraph{Tangent-Space Coarse Equation.}
\label{par:tangent-space-coarse-equation}
The computable coarse problem is the tangent-space Galerkin equation on the chosen localized manifold. It reads: find \(\widetilde u_H^{\ell}\in V_H\) such that
\begin{align}
A\bigl(\widetilde M_h^{\ell}(\widetilde u_H^{\ell});D\widetilde M_h^{\ell}(\widetilde u_H^{\ell})[\delta v_H]\bigr)
&=L\bigl(D\widetilde M_h^{\ell}(\widetilde u_H^{\ell})[\delta v_H]\bigr) \quad \forall \delta v_H\in V_H \label{eq:computable-coarse-problem}
\end{align}
The final fine-scale approximation is
\begin{equation}
\widetilde u_{\mathrm{ms}}^{\ell}=\widetilde M_h^{\ell}(\widetilde u_H^{\ell}) \label{eq:generic-fine-scale-approximation}
\end{equation}
Thus \(\widetilde M_h^{\ell}=M_h^{\ell}\) gives the deterministic patch-solved method, while \(\widetilde M_h^{\ell}=M_{h,\theta}^{\ell}\) gives the optional network-interpolated method. In the analysis and numerical experiments the nonlinear algebraic problem is considered on an admissible set where the reconstruction and tangent maps are defined.

\paragraph{Reduced Energy Interpretation.}
\label{par:reduced-energy-interpretation}
If the fine-scale problem is the Euler--Lagrange equation of an energy \(\mathcal E_h:V_h\to\mathbb R\), with
\begin{equation}
D\mathcal E_h(u_h)[v_h]=A(u_h;v_h)-L(v_h) \quad \forall u_h,v_h\in V_h \label{eq:energy-derivative-assumption}
\end{equation}
then \eqref{eq:computable-coarse-problem} is the stationarity condition for the reduced energy
\begin{equation}
\mathcal J_H^{\ell}(v_H)=\mathcal E_h(\widetilde M_h^{\ell}(v_H)) \quad v_H\in V_H \label{eq:reduced-energy-functional}
\end{equation}
Indeed, the chain rule gives
\begin{align}
D\mathcal J_H^{\ell}(v_H)[\delta v_H]
&=D\mathcal E_h(\widetilde M_h^{\ell}(v_H))[D\widetilde M_h^{\ell}(v_H)[\delta v_H]] \label{eq:reduced-energy-chain-rule}\\
&=A\bigl(\widetilde M_h^{\ell}(v_H);D\widetilde M_h^{\ell}(v_H)[\delta v_H]\bigr)-L\bigl(D\widetilde M_h^{\ell}(v_H)[\delta v_H]\bigr) \label{eq:reduced-energy-derivative}
\end{align}
This interpretation is used for the model diffusion problem in Section \ref{sec:heterogeneous-nonlinear-diffusion} and is useful for designing nonlinear solvers.

\paragraph{Online Residual Evaluation.}
\label{par:online-residual-evaluation}
Let \(\{\Phi_j\}_{j=1}^{N_H}\) be the basis of \(V_H\) used in \eqref{eq:local-coarse-restriction-operator}. For a coefficient vector \(c\in\mathbb R^{N_H}\), define
\begin{equation}
v_H(c)=\sum_{k=1}^{N_H}c_k\Phi_k \label{eq:coarse-vector-to-function}
\end{equation}
and the local parameter vector
\begin{equation}
\mu_i(c)=P_i v_H(c) \quad \forall i\in\mathcal I \label{eq:local-parameter-vector}
\end{equation}
A residual evaluation consists of four steps. First, compute the restricted local outputs \(\widetilde g_i^{\ell}(\mu_i(c))\) independently for all patches. Second, assemble the global state \(\widetilde M_h^{\ell}(v_H(c))\) by \eqref{eq:generic-reconstruction-map}. Third, for each coarse basis direction \(\Phi_j\), evaluate the local tangent outputs \(D\widetilde g_i^{\ell}(\mu_i(c))[P_i\Phi_j]\). The block locality is
\begin{equation}
P_i\Phi_j=0 \quad \text{and hence} \quad D\widetilde g_i^{\ell}(\mu_i(c))[P_i\Phi_j]=0 \quad \text{if } j\notin\mathcal I_i^{\ell} \label{eq:local-tangent-block-sparsity}
\end{equation}
Fourth, assemble the tangent functions \(D\widetilde M_h^{\ell}(v_H(c))[\Phi_j]\) by \eqref{eq:generic-tangent-map} and test the residual. For the patch-solved method, the linearized patch operator can be assembled once per patch and reused for all local tangent right-hand sides associated with \(j\in\mathcal I_i^{\ell}\).

\paragraph{Algebraic Residual.}
\label{par:algebraic-residual}
The resulting coarse residual has components
\begin{align}
\mathcal R_j(c)
&=A\bigl(\widetilde M_h^{\ell}(v_H(c));D\widetilde M_h^{\ell}(v_H(c))[\Phi_j]\bigr)
  -L\bigl(D\widetilde M_h^{\ell}(v_H(c))[\Phi_j]\bigr),
  \quad j=1,\ldots,N_H \label{eq:coarse-residual-vector}
\end{align}
and the online solve is the nonlinear algebraic system
\begin{equation}
\mathcal R(c)=0 \label{eq:coarse-residual-system}
\end{equation}
This is the form used in implementation and in the error discussion.

Algorithm~\ref{alg:patch-solved-online-solve} summarizes the corresponding patch-solved online procedure. The matrix \(J_H\) denotes the Jacobian, a Jacobian approximation, or a matrix-free linearization used by the chosen nonlinear coarse solver. In the experiments of Section~\ref{sec:numerical-experiments} and in the extended algorithm of Appendix~\ref{sec:extended-patch-solved-algorithm} it is the \emph{Galerkin Jacobian} \(J_H=(D\widetilde M_h^\ell)^{\top}A'(\widetilde u_{\mathrm{ms}}^\ell)\,D\widetilde M_h^\ell\), obtained by dropping the curvature term \(\langle\mathcal R,D^2\widetilde M_h^\ell\rangle\) from the exact outer derivative; this inexact choice is derived and motivated in Section~\ref{sec:numerical-experiments}.

\begin{algorithm}[H]
\caption{Patch-solved online solve for the localized manifold method}
\label{alg:patch-solved-online-solve}
\begin{algorithmic}[1]
\State Precompute the patches \(\omega_i^\ell\), the active regions \(\omega_i^0\), the partition of unity \(\varphi_{H,i}\), the restriction maps \(P_i\) and \(R_i\), and the load contributions
\State Choose an initial coarse state \(u_H^{(0)}\in V_H\) and set \(n\gets0\)
\Repeat \Comment{coarse nonlinear iteration}
\ForAll{patches \(i\in\mathcal I\)} \Comment{independent nonlinear patch solves}
\State Initialize the local correction \(v_{f,i}\gets0\)
\Repeat \Comment{local Newton iteration}
\State Solve the constrained linearized patch system for \(\delta v_{f,i}\)
\State Update \(v_{f,i}\gets v_{f,i}+\delta v_{f,i}\)
\Until{the local patch residual is below tolerance}
\State Store the active output \(R_i v_{f,i}\)
\State Assemble the local tangent solves associated with \eqref{eq:localized-tangent-patch-problem}
\EndFor
\State Assemble \(u_{\operatorname{rec}}=M_h^\ell(u_H^{(n)})\) from \eqref{eq:localized-reconstruction-map}
\State Assemble the tangent actions \(D M_h^\ell(u_H^{(n)})[\Phi_j]\) for the active coarse directions
\State Compute the reduced residual vector \(r_H\) using \eqref{eq:coarse-residual-vector}
\If{\(\|r_H\|\) is below tolerance}
\State \Return \(u_{\mathrm{ms}}^\ell=u_{\operatorname{rec}}\)
\EndIf
\State Solve \(J_H\delta u_H=-r_H\) on the interior coarse degrees of freedom \Comment{Galerkin Jacobian; the curvature term \(\langle\mathcal R,D^2 M_h^\ell\rangle\) is dropped}
\State Update \(u_H^{(n+1)}\gets u_H^{(n)}+\delta u_H\) and \(n\gets n+1\)
\Until{the coarse iteration has converged}
\State \Return \(u_{\mathrm{ms}}^\ell=u_{\operatorname{rec}}\)
\end{algorithmic}
\end{algorithm}

\paragraph{Nonlinear Solver and Work Split.}
\label{par:nonlinear-solver-and-work-split}
The formulation of \(\mathcal R\) requires only first derivatives of the local patch maps. An exact Jacobian for Newton's method would differentiate \eqref{eq:coarse-residual-vector} once more and therefore involves second derivatives of the reconstruction. In practice one may instead use damped Newton with finite-difference Jacobian actions, quasi-Newton updates, or matrix-free Newton--Krylov iterations based on residual evaluations. In the patch-solved method, the online cost is dominated by independent nonlinear and linearized patch solves. In the network-interpolated method, those online solves are replaced by evaluations of \(\mathcal N_{f,i}^{\ell,\theta_i}\) and its derivative; the patch solves are moved to the offline data-generation stage.

%% file: sections/heterogeneous_nonlinear_diffusion_v3.tex

\section{Heterogeneous Nonlinear Diffusion}
\label{sec:heterogeneous-nonlinear-diffusion}

This section fixes the model problem used to verify the structural assumptions and to guide the numerical experiments. The construction in Sections \ref{sec:global-fine-scale-manifold}--\ref{sec:computable-coarse-problems} applies to more general nonlinear elliptic problems; here we specialize to a heterogeneous monotone diffusion law.

\paragraph{Model Setting.}
\label{par:model-setting}
We use the domain \(\Omega\) and the nested \(P_1\) Lagrange spaces \(V_H\subset V_h\) introduced in Section \ref{sec:global-fine-scale-manifold}; nonhomogeneous Dirichlet data can be treated by a standard lifting. These spaces are \(H^1\)-conforming, with \(V_h\subset H_0^1(\Omega)\) and energy norm
\begin{equation}
\|v_h\|_V=\|\nabla v_h\|_{\Omega} \label{eq:model-discrete-spaces}
\end{equation}
All structural statements below are used at the discrete level. Since \(V_h\) and the patch spaces are finite dimensional, differentiability and local Lipschitz continuity are understood on admissible subsets of these discrete spaces. Let \(f\in L^2(\Omega)\), let \(\alpha\ge 0\), and assume that \(a_{\varepsilon}\in L^{\infty}(\Omega)\) satisfies
\begin{equation}
0<a_0\le a_{\varepsilon}(x)\le a_1<\infty \quad \operatorname{a.e.}\ x\in\Omega \label{eq:model-coefficient-bounds}
\end{equation}

\paragraph{Energy and Weak Form.}
\label{par:model-energy}
The discrete energy is
\begin{align}
\mathcal E_h(v_h)
&=\int_{\Omega}
a_{\varepsilon}(x)
\biggl(
\frac{1}{2}|\nabla v_h|^2+
\frac{\alpha}{4}|\nabla v_h|^4
\biggr)\,dx
-\int_{\Omega}fv_h\,dx \label{eq:model-energy}
\end{align}
The nonlinear form and the load functional are
\begin{align}
A_h(u_h;v_h)
&=\int_{\Omega}
a_{\varepsilon}(x)
\bigl(1+\alpha|\nabla u_h|^2\bigr)
\nabla u_h\cdot\nabla v_h\,dx \label{eq:model-nonlinear-form}\\
F_h(v_h)
&=\int_{\Omega}fv_h\,dx \label{eq:model-load-functional}
\end{align}
The fine-scale reference problem is to find \(u_h\in V_h\) such that
\begin{equation}
A_h(u_h;v_h)=F_h(v_h) \quad \forall v_h\in V_h \label{eq:model-fine-scale-problem}
\end{equation}
This problem is the Euler--Lagrange equation for \(\mathcal E_h\), that is,
\begin{equation}
D\mathcal E_h(u_h)[v_h]=A_h(u_h;v_h)-F_h(v_h) \quad \forall u_h,v_h\in V_h \label{eq:model-energy-derivative}
\end{equation}

\paragraph{Structural Properties.}
\label{par:model-structural-properties}
The following proposition records the model-specific properties needed by the formulation and the abstract error estimate.
\begin{prop}[Structural properties]
\label{prop:model-structural-properties}
Assume \eqref{eq:model-coefficient-bounds}. Then \(A_h\) is strongly monotone, locally Lipschitz continuous, and differentiable in its first argument. More precisely, for all \(v_h,w_h\in V_h\),
\begin{equation}
A_h(v_h;v_h-w_h)-A_h(w_h;v_h-w_h)
\ge a_0\|v_h-w_h\|_V^2 \label{eq:model-strong-monotonicity}
\end{equation}
For every \(K>0\), define the admissible set
\begin{equation}
\mathcal B_K=\bigl\{v_h\in V_h:\|\nabla v_h\|_{L^{\infty}(\Omega)}\le K\bigr\} \label{eq:model-admissible-set}
\end{equation}
For all \(v_h,w_h\in\mathcal B_K\),
\begin{equation}
\|A_h(v_h;\cdot)-A_h(w_h;\cdot)\|_{V_h'}
\le a_1\bigl(1+C\alpha K^2\bigr)\|v_h-w_h\|_V \label{eq:model-local-lipschitz-continuity}
\end{equation}
where \(C\) depends only on the dimension and shape regularity of the finite element mesh. The derivative is given by \eqref{eq:model-derivative}--\eqref{eq:model-derivative-quartic-part},
\begin{align}
A_h'(u_h)[z_h,r_h]
&=\int_{\Omega}
a_{\varepsilon}(x)
\bigl(1+\alpha|\nabla u_h|^2\bigr)
\nabla z_h\cdot\nabla r_h\,dx \label{eq:model-derivative}\\
&\quad+2\alpha\int_{\Omega}
a_{\varepsilon}(x)
(\nabla u_h\cdot\nabla z_h)(\nabla u_h\cdot\nabla r_h)\,dx \label{eq:model-derivative-quartic-part}
\end{align}
and it satisfies, for all \(u_h,z_h\in V_h\),
\begin{equation}
A_h'(u_h)[z_h,z_h]
\ge a_0\|z_h\|_V^2 \label{eq:model-tangent-coercivity}
\end{equation}
\end{prop}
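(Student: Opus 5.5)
The plan is to write the flux as \(\sigma(x,\xi)=a_{\varepsilon}(x)(1+\alpha|\xi|^2)\xi\), which is the gradient of the convex potential \(W(x,\xi)=a_{\varepsilon}(x)(\tfrac12|\xi|^2+\tfrac\alpha4|\xi|^4)\). Then \(A_h(u_h;v_h)=\int_\Omega\sigma(x,\nabla u_h)\cdot\nabla v_h\,dx\), and every claim reduces to a pointwise property of \(\sigma\) integrated over \(\Omega\). The pointwise tool is the Jacobian
\begin{equation*}
D_\xi\sigma(x,\xi)=a_{\varepsilon}(x)\bigl((1+\alpha|\xi|^2)I+2\alpha\,\xi\otimes\xi\bigr).
\end{equation*}
This matrix is symmetric. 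Its eigenvalues are \(a_\varepsilon(1+\alpha|\xi|^2)\) on \(\xi^\perp\) and \(a_\varepsilon(1+3\alpha|\xi|^2)\) along \(\xi\). Hence \(a_0|\eta|^2\le D_\xi\sigma(x,\xi)\eta\cdot\eta\le a_1(1+3\alpha|\xi|^2)|\eta|^2\).

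First, the derivative formula. Since \(u_h\mapsto A_h(u_h;r_h)\) is a polynomial in the finitely many nodal values of \(u_h\), it is smooth. Differentiating under the integral in the direction \(z_h\) gives \(\int_\Omega D_\xi\sigma(x,\nabla u_h)\nabla z_h\cdot\nabla r_h\,dx\). Expanding the Jacobian yields exactly the two terms of \eqref{eq:model-derivative}--\eqref{eq:model-derivative-quartic-part}.

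Next, the tangent coercivity \eqref{eq:model-tangent-coercivity}. Setting \(r_h=z_h\) gives a first term bounded below by \(\int a_\varepsilon|\nabla z_h|^2\ge a_0\|z_h\|_V^2\), using \(1+\alpha|\nabla u_h|^2\ge1\). The second term equals \(2\alpha\int a_\varepsilon(\nabla u_h\cdot\nabla z_h)^2\ge0\).

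For the strong monotonicity \eqref{eq:model-strong-monotonicity}, I use the mean-value identity along \(w_h+s(v_h-w_h)\), \(s\in[0,1\)]:
\begin{equation*}
A_h(v_h;v_h-w_h)-A_h(w_h;v_h-w_h)=\int_0^1A_h'(w_h+s(v_h-w_h))[v_h-w_h,v_h-w_h]\,ds .
\end{equation*}
Applying the coercivity just shown at each \(s\) gives the bound \(a_0\|v_h-w_h\|_V^2\).

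For the local Lipschitz bound \eqref{eq:model-local-lipschitz-continuity}, I use the same integral representation with a general test function \(r_h\). On \(\mathcal B_K\), which is convex, the segment stays in \(\mathcal B_K\), so \(|\nabla(w_h+s(v_h-w_h))|\le K\) almost everywhere. The eigenvalue bound then gives \(|D_\xi\sigma\,\eta\cdot\rho|\le a_1(1+3\alpha K^2)|\eta||\rho|\). Cauchy--Schwarz yields \(|A_h(v_h;r_h)-A_h(w_h;r_h)|\le a_1(1+3\alpha K^2)\|v_h-w_h\|_V\|r_h\|_V\). Taking the supremum over \(r_h\) gives the dual-norm estimate with \(C=3\). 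This constant is purely algebraic and so trivially depends at most on dimension and mesh.

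The only point needing care is this Lipschitz step. There I must control the nonlinearity pointwise through \(L^\infty\) bounds on gradients rather than through energy norms. This is exactly why the admissible set \(\mathcal B_K\) is defined via \(\|\nabla v_h\|_{L^\infty}\), and why its convexity is needed to keep the whole segment inside it. If one instead wanted \(K\) measured in the energy norm, an inverse estimate would be needed, bringing in \(h\)-dependence; I avoid that by working directly with \(\mathcal B_K\).
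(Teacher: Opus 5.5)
Your proof is correct and follows essentially the paper's route. Both arguments rest on coercivity of the flux Jacobian, \((1+\alpha|\xi|^2)|z|^2+2\alpha(\xi\cdot z)^2\ge|z|^2\), a mean-value identity for strong monotonicity, and a pointwise flux bound for the Lipschitz estimate. The differences are minor: you apply the mean-value identity to the form \(A_h\) rather than pointwise to the flux, and you derive the Lipschitz bound from the largest Jacobian eigenvalue along the segment, using convexity of \(\mathcal B_K\). That step supplies a proof of the pointwise Lipschitz estimate the paper only asserts, and it makes \(C=3\) explicit.
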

\begin{proof}
Set
\begin{equation}
g(\boldsymbol\xi)=\bigl(1+\alpha|\boldsymbol\xi|^2\bigr)\boldsymbol\xi \label{eq:model-flux-map}
\end{equation}
Then, for all \(\boldsymbol\xi,\boldsymbol z\in\mathbb R^d\),
\begin{equation}
Dg(\boldsymbol\xi)\boldsymbol z\cdot\boldsymbol z
=\bigl(1+\alpha|\boldsymbol\xi|^2\bigr)|\boldsymbol z|^2
+2\alpha(\boldsymbol\xi\cdot\boldsymbol z)^2
\ge |\boldsymbol z|^2 \label{eq:model-flux-jacobian-coercivity}
\end{equation}
Hence, for all \(\boldsymbol\xi,\boldsymbol\eta\in\mathbb R^d\),
\begin{align}
\bigl(g(\boldsymbol\xi)-g(\boldsymbol\eta)\bigr)\cdot(\boldsymbol\xi-\boldsymbol\eta)
&=\int_0^1 Dg\bigl(\boldsymbol\eta+s(\boldsymbol\xi-\boldsymbol\eta)\bigr)(\boldsymbol\xi-\boldsymbol\eta)\cdot(\boldsymbol\xi-\boldsymbol\eta)\,ds \label{eq:model-pointwise-monotonicity-identity}\\
&\ge |\boldsymbol\xi-\boldsymbol\eta|^2 \label{eq:model-pointwise-monotonicity-bound}
\end{align}
After multiplication by \(a_{\varepsilon}\) and integration over \(\Omega\), this gives \eqref{eq:model-strong-monotonicity}. The pointwise estimate
\begin{equation}
|g(\boldsymbol\xi)-g(\boldsymbol\eta)|
\le \bigl(1+C\alpha(|\boldsymbol\xi|^2+|\boldsymbol\eta|^2)\bigr)|\boldsymbol\xi-\boldsymbol\eta|
\quad \forall \boldsymbol\xi,\boldsymbol\eta\in\mathbb R^d \label{eq:model-pointwise-lipschitz}
\end{equation}
gives \eqref{eq:model-local-lipschitz-continuity} on \(\mathcal B_K\). Differentiating \eqref{eq:model-nonlinear-form} with respect to the first argument gives \eqref{eq:model-derivative}--\eqref{eq:model-derivative-quartic-part}, and \eqref{eq:model-tangent-coercivity} follows from \eqref{eq:model-flux-jacobian-coercivity}.
\end{proof}

\paragraph{Model Patch Problems.}
\label{par:model-patch-problems}
For \(v_H\in V_H\) and \(i\in\mathcal I\), the localized correction \(v_{f,i}^{\ell}(P_i v_H)\in V_{f,i}^{\ell}\) from Section \ref{sec:localized-reconstruction} is obtained by the same constitutive law on the patch. Define
\begin{equation}
u_i^{\ell}=v_H+v_{f,i}^{\ell}(P_i v_H) \label{eq:model-local-state-definition}
\end{equation}
Then the nonlinear patch problem is
\begin{align}
\int_{\omega_i^{\ell}}
a_{\varepsilon}(x)
\bigl(1+\alpha|\nabla u_i^{\ell}|^2\bigr)
\nabla u_i^{\ell}\cdot\nabla w_{f,i}\,dx
&=\int_{\omega_i^{\ell}}fw_{f,i}\,dx
\quad \forall w_{f,i}\in V_{f,i}^{\ell} \label{eq:model-local-patch-problem}
\end{align}
Only the restriction of \(v_H\) to \(\omega_i^{\ell}\), represented by \(P_i v_H\), enters \eqref{eq:model-local-patch-problem}. For a direction \(\delta v_H\in V_H\), the patch tangent bilinear form is given by \eqref{eq:model-local-tangent-form-principal}--\eqref{eq:model-local-tangent-form-quartic},
\begin{align}
B_i^{\ell}(z_h,w_{f,i})
&=\int_{\omega_i^{\ell}}
a_{\varepsilon}(x)
\bigl(1+\alpha|\nabla u_i^{\ell}|^2\bigr)
\nabla z_h\cdot\nabla w_{f,i}\,dx \label{eq:model-local-tangent-form-principal}\\
&\quad+2\alpha\int_{\omega_i^{\ell}}
a_{\varepsilon}(x)
(\nabla u_i^{\ell}\cdot\nabla z_h)
(\nabla u_i^{\ell}\cdot\nabla w_{f,i})\,dx \label{eq:model-local-tangent-form-quartic}
\end{align}
The tangent correction \(q_{f,i}^{\ell}\in V_{f,i}^{\ell}\) in the direction \(\delta v_H\) is defined by
\begin{equation}
B_i^{\ell}(\delta v_H+q_{f,i}^{\ell},w_{f,i})=0
\quad \forall w_{f,i}\in V_{f,i}^{\ell} \label{eq:model-local-tangent-patch-problem}
\end{equation}
and uniqueness gives the identity
\begin{equation}
q_{f,i}^{\ell}=D v_{f,i}^{\ell}(P_i v_H)[P_i\delta v_H] \label{eq:model-local-tangent-correction-identity}
\end{equation}
The coercivity bound \eqref{eq:model-tangent-coercivity} holds on each patch with the same lower constant \(a_0\), since functions in \(V_{f,i}^{\ell}\) satisfy homogeneous patch boundary conditions. Thus the tangent patch problems are uniformly solvable.

\paragraph{Finite-Dimensional Consequences.}
\label{par:model-finite-dimensional-consequences}
After choosing a basis of \(V_{f,i}^{\ell}\), \eqref{eq:model-local-patch-problem} is a finite-dimensional nonlinear algebraic system with parameter \(P_i v_H\). The Jacobian of this system with respect to the fine-scale unknown is the tangent patch operator in \eqref{eq:model-local-tangent-patch-problem}. Strong monotonicity gives uniqueness of the nonlinear patch correction, tangent coercivity gives invertibility of the tangent patch operator, and the implicit-function theorem gives differentiability of the local patch map on admissible parameter sets. The restricted maps \(\mathcal G_{f,i}^{\ell}=R_i v_{f,i}^{\ell}\) are therefore well-defined \(C^1\) targets for the optional network interpolation in Section \ref{sec:network-interpolation}.

%% file: sections/error_analysis_v5.tex

\section{Stability and Error Decomposition}
\label{sec:error-analysis}

This section gives a discrete perturbation framework for the localized manifold method. All estimates are relative to the fine-scale Galerkin solution \(u_h\). The finite element discretization error between \(u_h\) and the exact weak solution is not included. The main point is to separate the geometric stability mechanism from the localization and residual defects. In particular, the chord error generated by the curvature of the reconstructed manifold is absorbed into the monotonicity constant, while residual defects from inexact nonlinear solves or learned surrogates remain on the right-hand side.


\subsection{Reference Problem and Structural Assumptions}
\label{subsec:reference-problem-and-structural-assumptions}

\paragraph{Residual Notation.}
\label{par:error-residual-notation}
For \(z_h\in V_h\), define the residual functional by
\begin{equation}
\operatorname{Res}(z_h)(w_h)=L(w_h)-A(z_h;w_h)
\quad \forall w_h\in V_h \label{eq:error-residual-functional}
\end{equation}
The fine-scale solution satisfies \(\operatorname{Res}(u_h)(w_h)=0\) for all \(w_h\in V_h\).

\paragraph{Admissible Sets.}
\label{par:error-admissible-sets}
The argument is local in the nonlinear state. We assume that the relevant fine-scale states belong to an admissible set \(\mathcal U_h\subset V_h\), and that the relevant coarse states belong to an admissible set \(\mathcal U_H\subset V_H\). On \(\mathcal U_h\), the fine-scale operator is strongly monotone and locally Lipschitz continuous,
\begin{equation}
m_A\|v_h-w_h\|_V^2\le A(v_h;v_h-w_h)-A(w_h;v_h-w_h)
\quad \forall v_h,w_h\in\mathcal U_h \label{eq:error-strong-monotonicity-assumption}
\end{equation}
and
\begin{equation}
\|A(v_h;\cdot)-A(w_h;\cdot)\|_{V_h'}
\le L_A\|v_h-w_h\|_V
\quad \forall v_h,w_h\in\mathcal U_h \label{eq:error-lipschitz-continuity-assumption}
\end{equation}
with constants \(m_A>0\) and \(L_A>0\). Section \ref{sec:heterogeneous-nonlinear-diffusion} verifies these assumptions for the heterogeneous monotone diffusion model.

\paragraph{Projection and Section Properties.}
\label{par:error-projection-and-section-properties}
The ideal global reconstruction \(M_h\) and the computable reconstructions used below are sections of the coarse projection. Thus,
\begin{align}
\Pi_H M_h(v_H)&=v_H
\quad \forall v_H\in\mathcal U_H \label{eq:error-ideal-section-property}\\
\Pi_H\widetilde M_h^\ell(v_H)&=v_H
\quad \forall v_H\in\mathcal U_H \label{eq:error-computable-section-property}
\end{align}
The first identity implies the exact representation of the fine-scale Galerkin solution,
\begin{equation}
u_h=M_h(\Pi_Hu_h) \label{eq:error-exact-global-manifold-representation}
\end{equation}
We also assume that \(\Pi_H\) is stable in the \(V\)-norm,
\begin{equation}
\|\Pi_Hv_h\|_V\le C_\Pi\|v_h\|_V
\quad \forall v_h\in V_h \label{eq:error-projection-stability}
\end{equation}

\paragraph{Generic Reconstruction.}
\label{par:error-generic-reconstruction}
Let \(\widetilde M_h^\ell\) denote either the patch-solved reconstruction or the network-interpolated reconstruction,
\begin{equation}
\widetilde M_h^\ell=M_h^\ell
\quad \text{or} \quad
\widetilde M_h^\ell=M_{h,\theta}^\ell \label{eq:error-generic-reconstruction}
\end{equation}
and let \(\widetilde u_H^\ell\in V_H\) be the computed coarse state. We set
\begin{equation}
\widetilde u_{\mathrm{ms}}^\ell=\widetilde M_h^\ell(\widetilde u_H^\ell) \label{eq:error-generic-ms-solution}
\end{equation}
and compare it with the same reconstruction evaluated at the exact coarse coordinate,
\begin{equation}
\overline u_h^\ell=\widetilde M_h^\ell(\Pi_Hu_h) \label{eq:error-comparison-state}
\end{equation}
The coarse displacement and the coarse segment used below are
\begin{align}
\delta u_H^\ell&=\Pi_Hu_h-\widetilde u_H^\ell \label{eq:error-coarse-difference}\\
v_H(s)&=\widetilde u_H^\ell+s\delta u_H^\ell
\quad 0\le s\le 1 \label{eq:error-coarse-segment}
\end{align}
We assume that \(v_H(s)\in\mathcal U_H\) for \(0\le s\le1\), and that \(u_h\), \(\widetilde u_{\mathrm{ms}}^\ell\), and \(\overline u_h^\ell\) belong to \(\mathcal U_h\). We also assume that \(\widetilde M_h^\ell\) is differentiable on \(\mathcal U_H\).

\paragraph{Local-to-Global Stability.}
\label{par:error-local-to-global-stability}
The localized and network-interpolated reconstructions assemble restricted patch outputs on \(\omega_i^0\). We use the following stability bounds. For every family \((w_i)_{i\in\mathcal I}\), with \(w_i\in W_{f,i}^0\),
\begin{equation}
\biggl\|
(I-\Pi_H)I_h
\biggl(
\sum_{i\in\mathcal I}\varphi_{H,i}E_{i,0}^{\operatorname{ext}}w_i
\biggr)
\biggr\|_V
\le C_{\operatorname{pu}}
\biggl(\sum_{i\in\mathcal I}\|w_i\|_{H^1(\omega_i^0)}^2\biggr)^{1/2} \label{eq:partition-of-unity-stability}
\end{equation}
Moreover, for each local coarse restriction \(P_i\), there is a constant \(C_{P,i}\) such that
\begin{equation}
\|P_i\delta v_H\|_{\mathbb R^{n_{H,i}}}
\le C_{P,i}\|\delta v_H\|_V
\quad \forall \delta v_H\in V_H \label{eq:local-coarse-restriction-stability}
\end{equation}
The Euclidean norm in \eqref{eq:local-coarse-restriction-stability} is understood after the local coordinate scaling used for the patch problems and training sets. With this convention, the constants are stable under quasi-uniform refinement.


\subsection{Geometric Error Split}
\label{subsec:geometric-error-split}

The perturbation estimate is obtained by testing the residual at the computed multiscale state with the true error. The key point is that this error can be decomposed into three geometrically distinct contributions. First, we compare the ideal and localized reconstructions at the exact coarse coordinate. Second, we measure the difference between the chord on the localized manifold and its tangent approximation at the computed coarse state. Third, we isolate the tangent vector generated by the coarse-scale error, which is an admissible test function in the coarse problem,
\begin{align}
u_h-\widetilde u_{\mathrm{ms}}^{\ell}
&= M_h(\Pi_Hu_h)- \widetilde M_h^{\ell}(\widetilde u_H^\ell) 
\label{eq:error-direct-split-total}\\
&=M_h(\Pi_Hu_h) - \widetilde M_h^{\ell}(\Pi_Hu_h)
+
\widetilde M_h^{\ell}(\Pi_Hu_h) - \widetilde M_h^{\ell}(\widetilde u_H^\ell) 
\label{eq:error-direct-split-add-reconstruction}\\
&=M_h(\Pi_Hu_h) - \widetilde M_h^{\ell}(\Pi_Hu_h) \label{eq:error-direct-split-state}\\
&\quad+
\widetilde M_h^{\ell}(\Pi_Hu_h)
-
\Big( \widetilde M_h^{\ell}(\widetilde u_H^\ell) 
+ D\widetilde M_h^{\ell}(\widetilde u_H^{\ell})
[\Pi_Hu_h-\widetilde u_H^{\ell}] \Big) \label{eq:error-direct-split-chord-tangent}\\
&\quad+
D\widetilde M_h^{\ell}(\widetilde u_H^{\ell})
[\Pi_Hu_h-\widetilde u_H^{\ell}] \label{eq:error-direct-split-tangent}
\end{align}
We denote the three contributions in the final decomposition by
\begin{align}
e_{\mathrm{loc}}^\ell
&\coloneqq
M_h(\Pi_Hu_h)
-
\widetilde M_h^{\ell}(\Pi_Hu_h)
\label{eq:error-localization-vector}\\
e_{\mathrm{ch}}^\ell
&\coloneqq
\widetilde M_h^{\ell}(\Pi_Hu_h)
-
\Big( \widetilde M_h^{\ell}(\widetilde u_H^\ell)
+
D\widetilde M_h^{\ell}(\widetilde u_H^{\ell})
[\Pi_Hu_h-\widetilde u_H^{\ell}] \Big)
\label{eq:error-chord-vector}\\
z_{\mathrm{tan}}^\ell
&\coloneqq
D\widetilde M_h^{\ell}(\widetilde u_H^{\ell})
[\Pi_Hu_h-\widetilde u_H^{\ell}]
\label{eq:error-tangent-test-vector}
\end{align}
Thus,
\begin{equation}
u_h-\widetilde u_{\mathrm{ms}}^{\ell}
=
e_{\mathrm{loc}}^\ell
+
e_{\mathrm{ch}}^\ell
+
z_{\mathrm{tan}}^\ell
\label{eq:error-split-compact}
\end{equation}

The term \(e_{\mathrm{loc}}^\ell\) is the localization error, evaluated at the exact coarse coordinate \(\Pi_Hu_h\). The term \(e_{\mathrm{ch}}^\ell\) is the chord error, measuring the nonlinear defect between the localized chord and the tangent approximation at \(\widetilde u_H^\ell\). Finally, \(z_{\mathrm{tan}}^\ell\) is the image of the coarse-scale error \(\Pi_Hu_h-\widetilde u_H^\ell\) under the localized tangent map. In particular, \(z_{\mathrm{tan}}^\ell\) belongs to the admissible tangent test space used in the coarse problem.

\begin{figure}[htbp]
\centering
\includegraphics[width=0.92\textwidth]{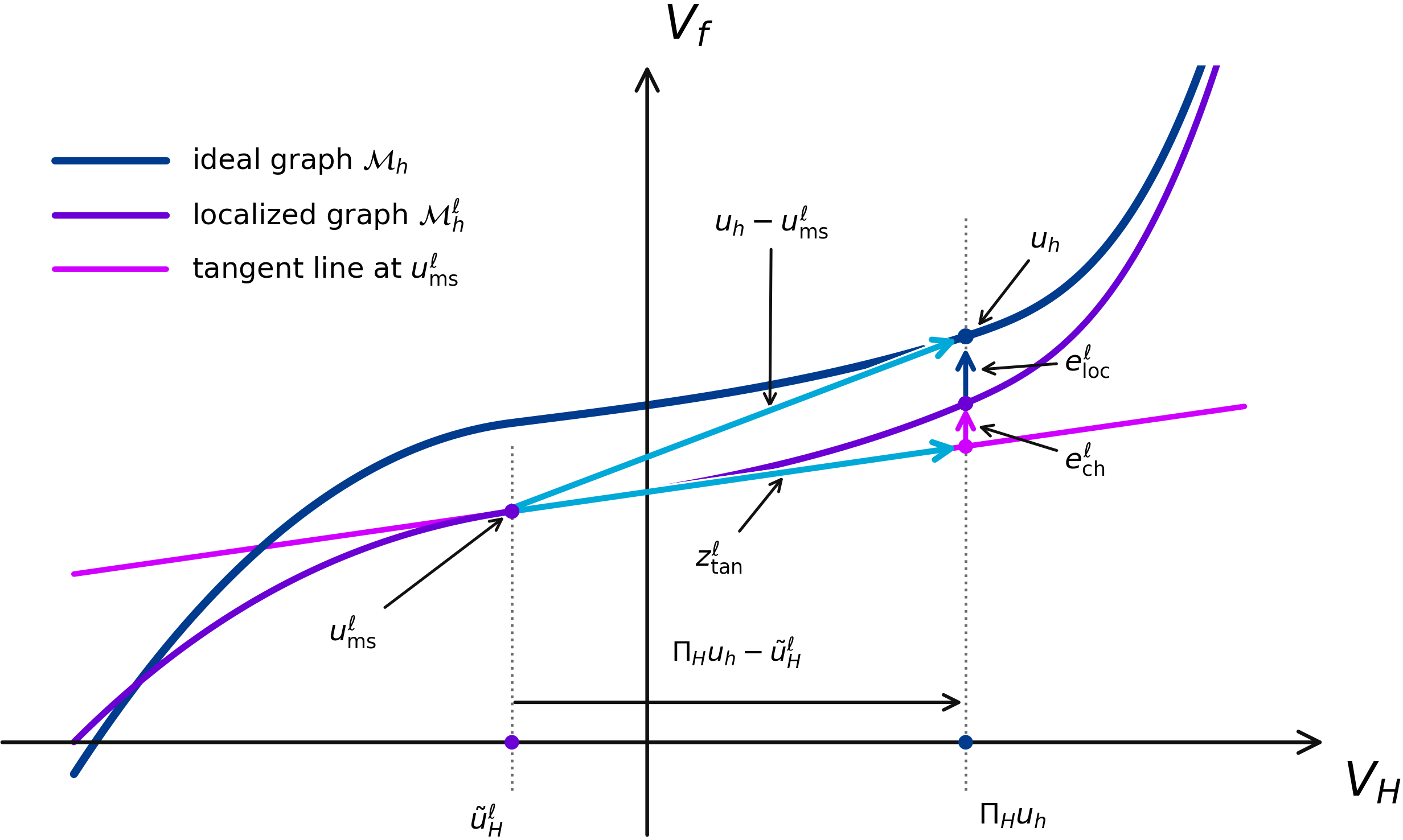}
\caption{Geometric interpretation of the error representation. The total error from \(\widetilde u_{\mathrm{ms}}^\ell\) to \(u_h\) is written as the tangent image \(z_{\mathrm{tan}}^\ell\), followed by the chord error \(e_{\mathrm{ch}}^\ell\), followed by the localization error \(e_{\mathrm{loc}}^\ell\).}
\label{fig:error-splitting-decomposition}
\end{figure}

\paragraph{Coarse Projection of the Graph Error.}
\label{par:coarse-projection-of-the-graph-error}
The estimates close because the coarse displacement is the coarse projection of the full graph error. Set
\begin{equation}
e_h\coloneqq u_h-\widetilde u_{\mathrm{ms}}^\ell \label{eq:error-total-error-vector}
\end{equation}
Using \eqref{eq:error-ideal-section-property} and \eqref{eq:error-computable-section-property}, we obtain
\begin{align}
\delta u_H^\ell
&=\Pi_Hu_h-\Pi_H\widetilde u_{\mathrm{ms}}^\ell \label{eq:error-coarse-error-projection-identity}\\
&=\Pi_H(u_h-\widetilde u_{\mathrm{ms}}^\ell) \label{eq:error-coarse-error-projected-total-error}\\
&=\Pi_He_h \label{eq:error-coarse-error-projected-error-vector}
\end{align}
Therefore, by \eqref{eq:error-projection-stability},
\begin{equation}
\|\delta u_H^\ell\|_V\le C_\Pi\|e_h\|_V \label{eq:error-coarse-error-projection-bound}
\end{equation}
This observation is used twice below. Quadratic dependence on \(\delta u_H^\ell\), as in the chord error, is absorbed into the monotonicity constant. Linear dependence on \(\delta u_H^\ell\), as in a reduced residual or a tangent-surrogate defect, remains as a right-hand-side perturbation.


\subsection{Chord Curvature and Kickback}
\label{subsec:chord-curvature-and-kickback}

The chord error is a curvature term for the reconstruction actually used in the coarse problem. It is not a separate localization defect. Under a local Lipschitz bound for the tangent reconstruction, the chord error is quadratic in the coarse displacement, so the residual it contributes is cubic in the multiscale error and can be kicked back into the monotonicity estimate once that error is small.

\begin{lem}[Chord curvature kickback]
\label{lem:chord-curvature-kickback}
Assume that \(D\widetilde M_h^\ell\) is Lipschitz on \(\mathcal U_H\), in the sense that
\begin{align}
\bigl\|
\bigl(
D\widetilde M_h^\ell(v_H)
-
D\widetilde M_h^\ell(w_H)
\bigr)[\xi_H]
\bigr\|_V
&\le
L_{\widetilde M,1}^\ell
\|v_H-w_H\|_V
\|\xi_H\|_V
\label{eq:error-localized-reconstruction-derivative-lipschitz}\\
&\quad \forall v_H,w_H\in\mathcal U_H
\quad \forall \xi_H\in V_H
\notag
\end{align}
Then the chord residual satisfies
\begin{equation}
\bigl|
\operatorname{Res}(\widetilde u_{\mathrm{ms}}^\ell)(e_{\mathrm{ch}}^\ell)
\bigr|
\le
\gamma_{\mathrm{ch}}^\ell
\|e_h\|_V^3 \label{eq:error-chord-residual-kickback-bound}
\end{equation}
with
\begin{equation}
\gamma_{\mathrm{ch}}^\ell
\coloneqq
\frac12
L_A L_{\widetilde M,1}^\ell C_\Pi^2 \label{eq:error-chord-kickback-constant}
\end{equation}
\end{lem}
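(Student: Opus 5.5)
The plan is to factor the chord residual as a dual pairing, and bound the two factors separately. The residual factor is linear in \(e_h\) because the fine-scale solution has zero residual. The chord factor is quadratic in the coarse displacement by the Lipschitz bound \eqref{eq:error-localized-reconstruction-derivative-lipschitz}. The projection identity \eqref{eq:error-coarse-error-projection-bound} then converts the coarse displacement back into \(e_h\). Concretely, I would first write
\begin{equation*}
\bigl|\operatorname{Res}(\widetilde u_{\mathrm{ms}}^\ell)(e_{\mathrm{ch}}^\ell)\bigr|
\le \|\operatorname{Res}(\widetilde u_{\mathrm{ms}}^\ell)\|_{V_h'}\,\|e_{\mathrm{ch}}^\ell\|_V .
\end{equation*}

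\textbf{Residual factor.} Since \(\operatorname{Res}(u_h)=0\) on \(V_h\), we have \(\operatorname{Res}(\widetilde u_{\mathrm{ms}}^\ell)=\operatorname{Res}(\widetilde u_{\mathrm{ms}}^\ell)-\operatorname{Res}(u_h)=A(u_h;\cdot)-A(\widetilde u_{\mathrm{ms}}^\ell;\cdot)\). Both \(u_h\) and \(\widetilde u_{\mathrm{ms}}^\ell\) are assumed to lie in \(\mathcal U_h\), so the Lipschitz assumption \eqref{eq:error-lipschitz-continuity-assumption} gives \(\|\operatorname{Res}(\widetilde u_{\mathrm{ms}}^\ell)\|_{V_h'}\le L_A\|e_h\|_V\).

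\textbf{Chord factor.} Since \(\widetilde M_h^\ell\) is differentiable on \(\mathcal U_H\) and the segment \(v_H(s)\) from \eqref{eq:error-coarse-segment} lies in \(\mathcal U_H\), the mean-value identity \eqref{eq:mean-value-identity}, applied to \(\widetilde M_h^\ell\) along \(v_H(s)\), gives
\begin{equation*}
e_{\mathrm{ch}}^\ell=\int_0^1\bigl(D\widetilde M_h^\ell(v_H(s))-D\widetilde M_h^\ell(\widetilde u_H^\ell)\bigr)[\delta u_H^\ell]\,ds .
\end{equation*}
We have \(v_H(s)-\widetilde u_H^\ell=s\,\delta u_H^\ell\). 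Applying \eqref{eq:error-localized-reconstruction-derivative-lipschitz} with \(\xi_H=\delta u_H^\ell\) bounds the integrand by \(L_{\widetilde M,1}^\ell s\|\delta u_H^\ell\|_V^2\). Integrating in \(s\) yields \(\|e_{\mathrm{ch}}^\ell\|_V\le\tfrac12 L_{\widetilde M,1}^\ell\|\delta u_H^\ell\|_V^2\). Then \eqref{eq:error-coarse-error-projection-bound}, which rests on the section properties \eqref{eq:error-ideal-section-property}--\eqref{eq:error-computable-section-property}, gives \(\|\delta u_H^\ell\|_V\le C_\Pi\|e_h\|_V\). Hence \(\|e_{\mathrm{ch}}^\ell\|_V\le\tfrac12 L_{\widetilde M,1}^\ell C_\Pi^2\|e_h\|_V^2\).

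Multiplying the two factors gives \eqref{eq:error-chord-residual-kickback-bound} with exactly the constant \eqref{eq:error-chord-kickback-constant}.

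\textbf{Main obstacle.} The only delicate point is justifying the integral representation of the chord. It needs \(s\mapsto \widetilde M_h^\ell(v_H(s))\) to be continuously differentiable on \([0,1]\). This follows from the standing assumptions that the segment stays in \(\mathcal U_H\) and that \(D\widetilde M_h^\ell\) is Lipschitz there; in finite dimensions, Lipschitz continuity of the derivative gives continuity and hence the fundamental theorem of calculus. Everything else is a direct application of the stated assumptions.
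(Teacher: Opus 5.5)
Your proposal is correct and follows the paper's proof essentially step for step. You use the dual-norm factorization, the integral representation of the chord along $v_H(s)$ with the Lipschitz bound giving $\tfrac12 L_{\widetilde M,1}^\ell\|\delta u_H^\ell\|_V^2$, the residual bound $L_A\|e_h\|_V$ from $\operatorname{Res}(u_h)=0$, and the projection bound $\|\delta u_H^\ell\|_V\le C_\Pi\|e_h\|_V$, and your added justification of the fundamental-theorem-of-calculus step is a harmless refinement that the paper leaves implicit.
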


\begin{proof}
By the definition of \(e_{\mathrm{ch}}^\ell\) and the mean-value identity along the segment \eqref{eq:error-coarse-segment},
\begin{align}
e_{\mathrm{ch}}^\ell
&=
\widetilde M_h^\ell(\widetilde u_H^\ell+\delta u_H^\ell)
-
\widetilde M_h^\ell(\widetilde u_H^\ell)
-
D\widetilde M_h^\ell(\widetilde u_H^\ell)[\delta u_H^\ell]
\label{eq:error-chord-error-expanded}\\
&=
\int_0^1
\bigl(
D\widetilde M_h^\ell(v_H(s))
-
D\widetilde M_h^\ell(\widetilde u_H^\ell)
\bigr)
[\delta u_H^\ell] \,ds
\label{eq:error-chord-error-integral-identity}
\end{align}
Using \eqref{eq:error-localized-reconstruction-derivative-lipschitz}, we get
\begin{align}
\|e_{\mathrm{ch}}^\ell\|_V
&\le
\int_0^1
L_{\widetilde M,1}^\ell
\|v_H(s)-\widetilde u_H^\ell\|_V
\|\delta u_H^\ell\|_V \,ds
\label{eq:error-chord-bound-integral}\\
&=
\int_0^1
sL_{\widetilde M,1}^\ell
\|\delta u_H^\ell\|_V^2 \,ds
\label{eq:error-chord-bound-segment}\\
&=
\frac12
L_{\widetilde M,1}^\ell
\|\delta u_H^\ell\|_V^2
\label{eq:error-chord-bound-quadratic}
\end{align}
Since \(\operatorname{Res}(u_h)=0\), the Lipschitz continuity \eqref{eq:error-lipschitz-continuity-assumption} bounds the residual at the computed state by the error,
\begin{equation}
\|\operatorname{Res}(\widetilde u_{\mathrm{ms}}^\ell)\|_{V_h'}
=\|A(u_h;\cdot)-A(\widetilde u_{\mathrm{ms}}^\ell;\cdot)\|_{V_h'}
\le L_A\|e_h\|_V
\label{eq:error-residual-at-computed-state}
\end{equation}
Combining this with the chord bound \eqref{eq:error-chord-bound-quadratic} and the projection bound \eqref{eq:error-coarse-error-projection-bound} gives
\begin{align}
\bigl|
\operatorname{Res}(\widetilde u_{\mathrm{ms}}^\ell)(e_{\mathrm{ch}}^\ell)
\bigr|
&\le
\|\operatorname{Res}(\widetilde u_{\mathrm{ms}}^\ell)\|_{V_h'}\|e_{\mathrm{ch}}^\ell\|_V
\label{eq:error-chord-residual-scale-bound}\\
&\le
\frac12
L_A L_{\widetilde M,1}^\ell
\|e_h\|_V\|\delta u_H^\ell\|_V^2
\label{eq:error-chord-residual-quadratic-coarse}\\
&\le
\frac12
L_A L_{\widetilde M,1}^\ell C_\Pi^2
\|e_h\|_V^3
\label{eq:error-chord-residual-quadratic-total}
\end{align}
This is \eqref{eq:error-chord-residual-kickback-bound} with \(\gamma_{\mathrm{ch}}^\ell\) defined by \eqref{eq:error-chord-kickback-constant}.
\end{proof}

\begin{figure}[htbp]
\centering
\includegraphics[width=0.92\textwidth]{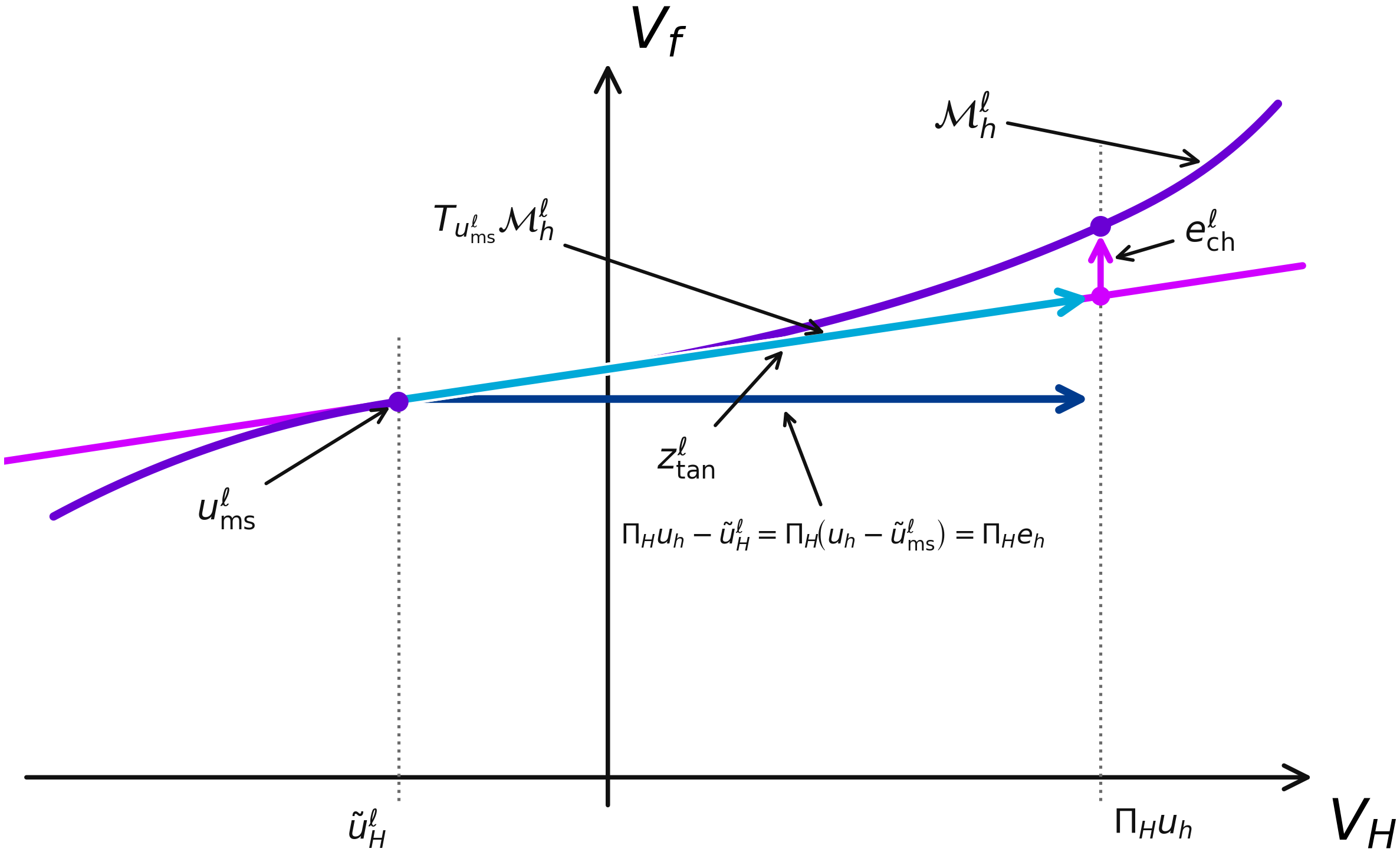}
\caption{Geometric interpretation of the chord error estimate. The tangent correction \(z_{\mathrm{tan}}^\ell\) transports the localized multiscale solution \(u_{\mathrm{ms}}^\ell\) along \(T_{u_{\mathrm{ms}}^\ell}\mathcal M_h^\ell\). The chord error \(e_{\mathrm{ch}}^\ell\) is the remaining vertical discrepancy between this tangent approximation and the nonlinear manifold \(\mathcal M_h^\ell\) at the coarse coordinate \(\Pi_H u_h\), where
\(\Pi_H u_h-\widetilde u_H^\ell=\Pi_H(u_h-\widetilde u_{\mathrm{ms}}^\ell)=\Pi_H e_h\).}
\label{fig:tangent-chord-projection}
\end{figure}


\subsection{Exact Localized Coarse Solve}
\label{subsec:exact-localized-coarse-solve}

We first consider the clean case in which the chosen localized or learned coarse problem is solved exactly. In this case the tangent term \(z_{\mathrm{tan}}^\ell\) in \eqref{eq:error-split-compact} makes no contribution to the residual. This is not because \(z_{\mathrm{tan}}^\ell\) vanishes—in general it does not—but because the residual tested against it vanishes, \(\operatorname{Res}(\widetilde u_{\mathrm{ms}}^\ell)(z_{\mathrm{tan}}^\ell)=0\): the vector \(z_{\mathrm{tan}}^\ell\) is an admissible tangent test function, and the exactly solved coarse equation \eqref{eq:error-exact-coarse-problem} forces the residual to vanish on the entire tangent test space (the nonlinear analogue of Galerkin orthogonality). The only remaining right-hand-side defect is then the localization or reconstruction error \(e_{\mathrm{loc}}^\ell\).

\begin{prop}[Exact localized coarse solve]
\label{prop:exact-localized-coarse-solve}
Let \(\widetilde u_{\mathrm{ms}}^\ell=\widetilde M_h^\ell(\widetilde u_H^\ell)\) and assume \eqref{eq:error-strong-monotonicity-assumption}, \eqref{eq:error-lipschitz-continuity-assumption}, and the hypotheses of Lemma \ref{lem:chord-curvature-kickback}. Assume further that the coarse state solves the tangent-space coarse problem exactly,
\begin{equation}
\operatorname{Res}(\widetilde u_{\mathrm{ms}}^\ell)
\bigl(
D\widetilde M_h^\ell(\widetilde u_H^\ell)[\xi_H]
\bigr)
=0
\quad \forall \xi_H\in V_H \label{eq:error-exact-coarse-problem}
\end{equation}
If the error is sufficiently small that \(\gamma_{\mathrm{ch}}^\ell\|e_h\|_V\le\tfrac12 m_A\), then
\begin{equation}
\|u_h-\widetilde u_{\mathrm{ms}}^\ell\|_V
\le
\frac{2L_A}{m_A}
\|e_{\mathrm{loc}}^\ell\|_V \label{eq:error-exact-coarse-solve-bound}
\end{equation}
\end{prop}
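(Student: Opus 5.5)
We will prove the estimate by testing the monotonicity inequality \eqref{eq:error-strong-monotonicity-assumption} with the total error \(e_h=u_h-\widetilde u_{\mathrm{ms}}^\ell\). Both \(u_h\) and \(\widetilde u_{\mathrm{ms}}^\ell\) lie in \(\mathcal U_h\), and \(\operatorname{Res}(u_h)=0\). Hence
\begin{equation*}
m_A\|e_h\|_V^2\le A(u_h;e_h)-A(\widetilde u_{\mathrm{ms}}^\ell;e_h)=L(e_h)-A(\widetilde u_{\mathrm{ms}}^\ell;e_h)=\operatorname{Res}(\widetilde u_{\mathrm{ms}}^\ell)(e_h).
\end{equation*}
Next we insert the geometric split \eqref{eq:error-split-compact}. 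Since the residual is linear in its argument, the right-hand side becomes the sum of three terms, \(\operatorname{Res}(\widetilde u_{\mathrm{ms}}^\ell)(e_{\mathrm{loc}}^\ell)\), \(\operatorname{Res}(\widetilde u_{\mathrm{ms}}^\ell)(e_{\mathrm{ch}}^\ell)\), and \(\operatorname{Res}(\widetilde u_{\mathrm{ms}}^\ell)(z_{\mathrm{tan}}^\ell)\).

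The tangent term vanishes identically. Indeed, \(z_{\mathrm{tan}}^\ell=D\widetilde M_h^\ell(\widetilde u_H^\ell)[\xi_H]\) with \(\xi_H=\delta u_H^\ell=\Pi_Hu_h-\widetilde u_H^\ell\in V_H\), so the exact coarse equation \eqref{eq:error-exact-coarse-problem} applies directly. The chord term is bounded by Lemma \ref{lem:chord-curvature-kickback}, which gives \(|\operatorname{Res}(\widetilde u_{\mathrm{ms}}^\ell)(e_{\mathrm{ch}}^\ell)|\le\gamma_{\mathrm{ch}}^\ell\|e_h\|_V^3\). Under the smallness hypothesis \(\gamma_{\mathrm{ch}}^\ell\|e_h\|_V\le\tfrac12 m_A\), this is at most \(\tfrac12 m_A\|e_h\|_V^2\).

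The localization term is handled as in \eqref{eq:error-residual-at-computed-state}. Using \(\operatorname{Res}(u_h)=0\) together with the Lipschitz bound \eqref{eq:error-lipschitz-continuity-assumption}, we get
\begin{equation*}
|\operatorname{Res}(\widetilde u_{\mathrm{ms}}^\ell)(e_{\mathrm{loc}}^\ell)|\le\|A(u_h;\cdot)-A(\widetilde u_{\mathrm{ms}}^\ell;\cdot)\|_{V_h'}\|e_{\mathrm{loc}}^\ell\|_V\le L_A\|e_h\|_V\|e_{\mathrm{loc}}^\ell\|_V.
\end{equation*}

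Collecting the three terms gives
\begin{equation*}
m_A\|e_h\|_V^2\le L_A\|e_h\|_V\|e_{\mathrm{loc}}^\ell\|_V+\tfrac12 m_A\|e_h\|_V^2.
\end{equation*}
We absorb the last term into the left-hand side, which leaves \(\tfrac12 m_A\|e_h\|_V^2\le L_A\|e_h\|_V\|e_{\mathrm{loc}}^\ell\|_V\). If \(e_h=0\) the claim is trivial; otherwise we divide by \(\|e_h\|_V\) and obtain \eqref{eq:error-exact-coarse-solve-bound}.

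No step is genuinely hard; the one needing care is the kickback. The chord bound is cubic rather than quadratic in \(\|e_h\|_V\), so it can only be absorbed under the stated smallness assumption. The argument is therefore a conditional a posteriori-type bound and not a bootstrap. We will also check that the hypotheses of Lemma \ref{lem:chord-curvature-kickback} are satisfied, in particular that the segment \(v_H(s)\) stays in \(\mathcal U_H\); these are precisely the standing assumptions made in Section \ref{subsec:reference-problem-and-structural-assumptions}.
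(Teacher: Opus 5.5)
Your proposal is correct and follows the paper's proof essentially step by step: monotonicity tested with \(e_h\), the split \eqref{eq:error-split-compact}, the tangent term eliminated by \eqref{eq:error-exact-coarse-problem} with \(\xi_H=\delta u_H^\ell\), the Lipschitz bound on the localization term, Lemma \ref{lem:chord-curvature-kickback} for the chord term, and kickback under the smallness condition. The only cosmetic difference is that you apply the smallness bound to the chord term before collecting terms, whereas the paper first writes the inequality with the cubic term and then absorbs it.
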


\begin{proof}
Set \(e_h=u_h-\widetilde u_{\mathrm{ms}}^\ell\). Strong monotonicity and the fine-scale equation give
\begin{align}
m_A\|e_h\|_V^2
&\le
A(u_h;e_h)-A(\widetilde u_{\mathrm{ms}}^\ell;e_h)
\label{eq:error-exact-proof-monotonicity}\\
&=
L(e_h)-A(\widetilde u_{\mathrm{ms}}^\ell;e_h)
\label{eq:error-exact-proof-fine-scale-equation}\\
&=
\operatorname{Res}(\widetilde u_{\mathrm{ms}}^\ell)(e_h)
\label{eq:error-exact-proof-residual-error}
\end{align}
Using \eqref{eq:error-split-compact}, the triangle inequality, and \eqref{eq:error-exact-coarse-problem} with \(\xi_H=\delta u_H^\ell\), we obtain
\begin{align}
m_A\|e_h\|_V^2
&\le
\bigl|
\operatorname{Res}(\widetilde u_{\mathrm{ms}}^\ell)(e_{\mathrm{loc}}^\ell)
\bigr|
+
\bigl|
\operatorname{Res}(\widetilde u_{\mathrm{ms}}^\ell)(e_{\mathrm{ch}}^\ell)
\bigr|
\label{eq:error-exact-proof-split-bound}
\end{align}
The localization term is controlled by Lipschitz continuity. Since \(u_h\) solves the fine-scale problem,
\begin{align}
\bigl|
\operatorname{Res}(\widetilde u_{\mathrm{ms}}^\ell)(e_{\mathrm{loc}}^\ell)
\bigr|
&=
\bigl|
A(u_h;e_{\mathrm{loc}}^\ell)
-
A(\widetilde u_{\mathrm{ms}}^\ell;e_{\mathrm{loc}}^\ell)
\bigr|
\label{eq:error-exact-proof-localization-identity}\\
&\le
L_A\|e_h\|_V\|e_{\mathrm{loc}}^\ell\|_V
\label{eq:error-exact-proof-localization-bound}
\end{align}
The chord term is bounded by Lemma \ref{lem:chord-curvature-kickback}. Hence,
\begin{equation}
m_A\|e_h\|_V^2
\le
L_A\|e_{\mathrm{loc}}^\ell\|_V\|e_h\|_V
+
\gamma_{\mathrm{ch}}^\ell\|e_h\|_V^3 \label{eq:error-exact-proof-before-kickback}
\end{equation}
Under the smallness assumption \(\gamma_{\mathrm{ch}}^\ell\|e_h\|_V\le\tfrac12 m_A\), the chord term is at most \(\tfrac12 m_A\|e_h\|_V^2\). Moving it to the left-hand side gives
\begin{equation}
\tfrac12 m_A\|e_h\|_V^2
\le
L_A\|e_{\mathrm{loc}}^\ell\|_V\|e_h\|_V \label{eq:error-exact-proof-after-kickback}
\end{equation}
If \(e_h\neq0\), division by \(\|e_h\|_V\) gives \eqref{eq:error-exact-coarse-solve-bound}. If \(e_h=0\), the estimate is immediate.
\end{proof}


\subsection{Deterministic Localization Error}
\label{subsec:deterministic-localization-error}

For the patch-solved method, the computable map is \(M_h^\ell\). Its deterministic state reconstruction error is measured relative to the ideal global map \(M_h\). On the admissible coarse set \(\mathcal U_H\), define
\begin{equation}
\eta_{\mathrm{loc}}(H,\ell)
\coloneqq
\sup_{v_H\in\mathcal U_H}
\|M_h(v_H)-M_h^\ell(v_H)\|_V \label{eq:deterministic-localization-state-defect}
\end{equation}
Equivalently, one may write \(\eta_{\mathrm{loc}}^0=\eta_{\mathrm{loc}}(H,\ell)\). Since \(u_h=M_h(\Pi_Hu_h)\), the localization vector for the patch-solved method satisfies
\begin{equation}
\|e_{\mathrm{loc}}^\ell\|_V
=
\|M_h(\Pi_Hu_h)-M_h^\ell(\Pi_Hu_h)\|_V
\le
\eta_{\mathrm{loc}}(H,\ell) \label{eq:localized-state-defect-bound}
\end{equation}
Combining \eqref{eq:localized-state-defect-bound} with Proposition \ref{prop:exact-localized-coarse-solve} gives the following deterministic consequence.

\begin{prop}[Patch-solved localization bound]
\label{prop:patch-solved-localization-bound}
Let \(u_{\mathrm{ms}}^\ell=M_h^\ell(u_H^\ell)\) be the patch-solved localized solution, and assume that \(u_H^\ell\) solves the localized tangent-space coarse problem exactly. If the assumptions of Proposition \ref{prop:exact-localized-coarse-solve} hold with \(\widetilde M_h^\ell=M_h^\ell\), then
\begin{equation}
\|u_h-u_{\mathrm{ms}}^\ell\|_V
\le
\frac{2L_A}{m_A}
\eta_{\mathrm{loc}}(H,\ell) \label{eq:patch-solved-localization-bound}
\end{equation}
provided the multiscale error is sufficiently small that \(\gamma_{\mathrm{ch}}^\ell\|u_h-u_{\mathrm{ms}}^\ell\|_V\le\tfrac12 m_A\).
\end{prop}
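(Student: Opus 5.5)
The plan is to obtain the estimate as a direct specialization of Proposition \ref{prop:exact-localized-coarse-solve} to \(\widetilde M_h^\ell=M_h^\ell\), followed by the pointwise localization bound \eqref{eq:localized-state-defect-bound}.

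First, I set \(\widetilde M_h^\ell=M_h^\ell\), \(\widetilde u_H^\ell=u_H^\ell\), and \(\widetilde u_{\mathrm{ms}}^\ell=u_{\mathrm{ms}}^\ell\). The assumption that \(u_H^\ell\) solves the localized tangent-space coarse problem \eqref{eq:computable-coarse-problem} exactly is, in residual notation, precisely \eqref{eq:error-exact-coarse-problem}. The remaining hypotheses are assumed in the statement: monotonicity \eqref{eq:error-strong-monotonicity-assumption}, Lipschitz continuity \eqref{eq:error-lipschitz-continuity-assumption}, the Lipschitz tangent bound of Lemma \ref{lem:chord-curvature-kickback}, and the admissibility of the segment and of the states. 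The smallness condition \(\gamma_{\mathrm{ch}}^\ell\|u_h-u_{\mathrm{ms}}^\ell\|_V\le\tfrac12 m_A\) is exactly the kickback condition of that proposition. Proposition \ref{prop:exact-localized-coarse-solve} therefore gives
\[
\|u_h-u_{\mathrm{ms}}^\ell\|_V\le \frac{2L_A}{m_A}\|e_{\mathrm{loc}}^\ell\|_V ,
\qquad e_{\mathrm{loc}}^\ell=M_h(\Pi_Hu_h)-M_h^\ell(\Pi_Hu_h).
\]

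Second, I bound \(\|e_{\mathrm{loc}}^\ell\|_V\) by \(\eta_{\mathrm{loc}}(H,\ell)\). By the definition \eqref{eq:deterministic-localization-state-defect}, \(\eta_{\mathrm{loc}}\) is a supremum over \(\mathcal U_H\), so I need \(\Pi_Hu_h\in\mathcal U_H\). This follows from the standing assumption that the whole segment \(v_H(s)\) lies in \(\mathcal U_H\), since \(v_H(1)=\widetilde u_H^\ell+\delta u_H^\ell=\Pi_Hu_h\). With that, \eqref{eq:localized-state-defect-bound} applies, and chaining the two inequalities gives \eqref{eq:patch-solved-localization-bound}.

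The argument involves no real obstacle. The only point requiring care is confirming that the evaluation point \(\Pi_Hu_h\) is admissible, so that the supremum defining \(\eta_{\mathrm{loc}}\) actually controls \(e_{\mathrm{loc}}^\ell\). As noted above, this follows from the segment assumption at \(s=1\).
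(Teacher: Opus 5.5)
Your proposal is correct and matches the paper's argument: the paper obtains the bound by combining Proposition~\ref{prop:exact-localized-coarse-solve}, with \(\widetilde M_h^\ell=M_h^\ell\), with the localization estimate \eqref{eq:localized-state-defect-bound}. Your extra check that \(\Pi_Hu_h=v_H(1)\in\mathcal U_H\), which ensures that the supremum defining \(\eta_{\mathrm{loc}}(H,\ell)\) actually controls \(e_{\mathrm{loc}}^\ell\), is correct; the paper leaves this step implicit.
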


\begin{rem}[Localization estimate]
\label{rem:localization-estimate}
The present paper uses \(\eta_{\mathrm{loc}}(H,\ell)\) as the deterministic localization quantity. A proof of decay in terms of \(H\), the oversampling parameter \(\ell\), coefficient contrast, and nonlinear stability constants is deferred to a forthcoming localization analysis. Thus Proposition \ref{prop:patch-solved-localization-bound} should be read as a stability transfer result: any estimate for \(\eta_{\mathrm{loc}}(H,\ell)\) immediately yields a corresponding multiscale error estimate.
\end{rem}


\subsection{Residual Defects and Inexact Coarse Solves}
\label{subsec:residual-defects-and-inexact-coarse-solves}

We now allow the tangent-space coarse problem to be solved inexactly. The third term in \eqref{eq:error-split-compact} is then controlled by the reduced residual of the coarse problem.

\paragraph{Reduced Residual.}
\label{par:reduced-residual}
For a reconstruction \(\widetilde M_h^\ell\), define the reduced residual by
\begin{equation}
\mathcal R_H^\ell(w_H)(\xi_H)
\coloneqq
\operatorname{Res}\bigl(\widetilde M_h^\ell(w_H)\bigr)
\bigl(
D\widetilde M_h^\ell(w_H)[\xi_H]
\bigr)
\quad \forall \xi_H\in V_H \label{eq:error-reduced-residual}
\end{equation}
Its dual norm is
\begin{equation}
\|\mathcal R_H^\ell(w_H)\|_{V_H'}
\coloneqq
\sup_{\xi_H\in V_H\setminus\{0\}}
\frac{
|\mathcal R_H^\ell(w_H)(\xi_H)|
}{
\|\xi_H\|_V
} \label{eq:error-reduced-residual-dual-norm}
\end{equation}
By the definition of \(z_{\mathrm{tan}}^\ell\), the tangent residual contribution is
\begin{equation}
\operatorname{Res}(\widetilde u_{\mathrm{ms}}^\ell)(z_{\mathrm{tan}}^\ell)
=
\mathcal R_H^\ell(\widetilde u_H^\ell)(\delta u_H^\ell) \label{eq:error-algebraic-term-as-reduced-residual}
\end{equation}
Consequently, \eqref{eq:error-coarse-error-projection-bound} gives
\begin{align}
\bigl|
\operatorname{Res}(\widetilde u_{\mathrm{ms}}^\ell)(z_{\mathrm{tan}}^\ell)
\bigr|
&\le
\|\mathcal R_H^\ell(\widetilde u_H^\ell)\|_{V_H'}
\|\delta u_H^\ell\|_V
\label{eq:error-reduced-residual-bound-coarse}\\
&\le
C_\Pi
\|\mathcal R_H^\ell(\widetilde u_H^\ell)\|_{V_H'}
\|e_h\|_V
\label{eq:error-reduced-residual-bound-total}
\end{align}

\begin{prop}[Inexact coarse solve]
\label{prop:inexact-coarse-solve}
Let \(\widetilde u_{\mathrm{ms}}^\ell=\widetilde M_h^\ell(\widetilde u_H^\ell)\). Assume \eqref{eq:error-strong-monotonicity-assumption}, \eqref{eq:error-lipschitz-continuity-assumption}, and the hypotheses of Lemma \ref{lem:chord-curvature-kickback}. If the error is sufficiently small that \(\gamma_{\mathrm{ch}}^\ell\|e_h\|_V\le\tfrac12 m_A\), then
\begin{equation}
\|u_h-\widetilde u_{\mathrm{ms}}^\ell\|_V
\le
\frac{2}{m_A}
\bigl(
L_A\|e_{\mathrm{loc}}^\ell\|_V
+
C_\Pi\|\mathcal R_H^\ell(\widetilde u_H^\ell)\|_{V_H'}
\bigr) \label{eq:error-inexact-coarse-solve-bound}
\end{equation}
\end{prop}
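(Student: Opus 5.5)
The plan is to repeat the argument of Proposition \ref{prop:exact-localized-coarse-solve} essentially verbatim. The only change is that the tangent contribution no longer vanishes; instead it is bounded through the reduced residual, using \eqref{eq:error-algebraic-term-as-reduced-residual}--\eqref{eq:error-reduced-residual-bound-total}.

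First, set \(e_h=u_h-\widetilde u_{\mathrm{ms}}^\ell\). Strong monotonicity \eqref{eq:error-strong-monotonicity-assumption}, together with \(\operatorname{Res}(u_h)=0\), gives
\begin{equation*}
m_A\|e_h\|_V^2\le\operatorname{Res}(\widetilde u_{\mathrm{ms}}^\ell)(e_h).
\end{equation*}
Both \(u_h\) and \(\widetilde u_{\mathrm{ms}}^\ell\) lie in \(\mathcal U_h\), so this step is legitimate. Next, insert the geometric split \eqref{eq:error-split-compact}, \(e_h=e_{\mathrm{loc}}^\ell+e_{\mathrm{ch}}^\ell+z_{\mathrm{tan}}^\ell\). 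Linearity of the residual in the test function and the triangle inequality then bound the right-hand side by the sum of three absolute residual terms.

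The three terms are treated as follows.
\begin{itemize}
\item The localization term is handled exactly as in \eqref{eq:error-exact-proof-localization-identity}--\eqref{eq:error-exact-proof-localization-bound}: rewrite it as \(A(u_h;e_{\mathrm{loc}}^\ell)-A(\widetilde u_{\mathrm{ms}}^\ell;e_{\mathrm{loc}}^\ell)\) and apply \eqref{eq:error-lipschitz-continuity-assumption}. This gives at most \(L_A\|e_h\|_V\|e_{\mathrm{loc}}^\ell\|_V\).
\item The chord term is bounded by Lemma \ref{lem:chord-curvature-kickback}, giving \(\gamma_{\mathrm{ch}}^\ell\|e_h\|_V^3\).
\item The tangent term equals \(\mathcal R_H^\ell(\widetilde u_H^\ell)(\delta u_H^\ell)\) by \eqref{eq:error-algebraic-term-as-reduced-residual}. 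Using the coarse-projection identity \(\delta u_H^\ell=\Pi_He_h\) and \eqref{eq:error-coarse-error-projection-bound}, it is at most \(C_\Pi\|\mathcal R_H^\ell(\widetilde u_H^\ell)\|_{V_H'}\|e_h\|_V\), as recorded in \eqref{eq:error-reduced-residual-bound-total}.
\end{itemize}

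Collecting the three bounds gives
\begin{equation*}
m_A\|e_h\|_V^2\le\bigl(L_A\|e_{\mathrm{loc}}^\ell\|_V+C_\Pi\|\mathcal R_H^\ell(\widetilde u_H^\ell)\|_{V_H'}\bigr)\|e_h\|_V+\gamma_{\mathrm{ch}}^\ell\|e_h\|_V^3.
\end{equation*}
The smallness hypothesis \(\gamma_{\mathrm{ch}}^\ell\|e_h\|_V\le\tfrac12m_A\) lets us absorb the cubic term into the left-hand side, leaving \(\tfrac12 m_A\|e_h\|_V^2\) there. Dividing by \(\|e_h\|_V\) when it is nonzero (the case \(e_h=0\) is trivial) yields \eqref{eq:error-inexact-coarse-solve-bound}.

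I do not expect a genuine obstacle, since every ingredient is already stated earlier in the paper. The one point needing care is the tangent term. It must be expressed through the reduced residual at the \emph{computed} coarse state \(\widetilde u_H^\ell\), in the direction \(\delta u_H^\ell\). Its bound is linear in \(\|e_h\|_V\) and therefore stays on the right-hand side, whereas the chord term is cubic and is the only one kicked back. This works because \(z_{\mathrm{tan}}^\ell\) is defined with \(D\widetilde M_h^\ell(\widetilde u_H^\ell)\), the same tangent map that appears in \eqref{eq:error-reduced-residual}.
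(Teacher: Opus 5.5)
Your proposal is correct and follows the paper's proof essentially step for step. The paper uses the same chain: monotonicity to reach \(\operatorname{Res}(\widetilde u_{\mathrm{ms}}^\ell)(e_h)\), the three-term split, the Lipschitz bound for the localization term, Lemma \ref{lem:chord-curvature-kickback} for the chord term, \eqref{eq:error-reduced-residual-bound-total} for the tangent term, and kickback of the cubic term under the smallness condition.
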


\begin{proof}
The proof is identical to the proof of Proposition \ref{prop:exact-localized-coarse-solve}, except that the tangent residual term is not zero. Using \eqref{eq:error-split-compact}, Lemma \ref{lem:chord-curvature-kickback}, \eqref{eq:error-exact-proof-localization-bound}, and \eqref{eq:error-reduced-residual-bound-total}, we obtain
\begin{align}
m_A\|e_h\|_V^2
&\le
L_A\|e_{\mathrm{loc}}^\ell\|_V\|e_h\|_V
+
\gamma_{\mathrm{ch}}^\ell\|e_h\|_V^3
+
C_\Pi\|\mathcal R_H^\ell(\widetilde u_H^\ell)\|_{V_H'}\|e_h\|_V
\label{eq:error-inexact-proof-before-kickback}
\end{align}
Under the smallness assumption \(\gamma_{\mathrm{ch}}^\ell\|e_h\|_V\le\tfrac12 m_A\), the chord term is at most \(\tfrac12 m_A\|e_h\|_V^2\). Moving it to the left-hand side gives
\begin{equation}
\tfrac12 m_A\|e_h\|_V^2
\le
\bigl(
L_A\|e_{\mathrm{loc}}^\ell\|_V
+
C_\Pi\|\mathcal R_H^\ell(\widetilde u_H^\ell)\|_{V_H'}
\bigr)
\|e_h\|_V \label{eq:error-inexact-proof-after-kickback}
\end{equation}
If \(e_h\neq0\), division by \(\|e_h\|_V\) gives \eqref{eq:error-inexact-coarse-solve-bound}. If \(e_h=0\), the estimate is immediate.
\end{proof}

\paragraph{Newton Stopping Criteria.}
\label{par:newton-stopping-criteria}
For Newton-type reduced solves, the dual norm of the reduced residual is the natural stopping quantity. If the iteration is stopped when
\begin{equation}
\|\mathcal R_H^\ell(\widetilde u_H^\ell)\|_{V_H'}
\le
\tau_{\mathrm{alg}} \label{eq:error-newton-stopping-criterion}
\end{equation}
then \eqref{eq:error-inexact-coarse-solve-bound} gives
\begin{equation}
\|u_h-\widetilde u_{\mathrm{ms}}^\ell\|_V
\le
\frac{2}{m_A}
\bigl(
L_A\|e_{\mathrm{loc}}^\ell\|_V
+
C_\Pi\tau_{\mathrm{alg}}
\bigr) \label{eq:error-newton-stopping-error-bound}
\end{equation}
Thus algebraic error enters additively through the reduced residual norm, while the chord curvature is absorbed by monotonicity under the smallness assumption.


\begin{rem}[Regularity of the local patch maps]
\label{rem:patch-map-regularity}
After choosing local bases, the correction problem on a patch \(\omega_i^\ell\) is a finite-dimensional system \(F_i(c_H,c_f)=0\), whose solution defines the implicit map \(c_f=\Psi_i(c_H)\). If \(F_i\) is \(C^r\) and the fine-scale Jacobian \(\partial_{c_f}F_i\) is uniformly invertible along an admissible parameter set---precisely the tangent coercivity that makes the tangent patch solves well posed---then the implicit function theorem yields \(\Psi_i\in C^r\) with
\begin{equation}
D\Psi_i(c_H)
=-\partial_{c_f}F_i(c_H,\Psi_i(c_H))^{-1}\,\partial_{c_H}F_i(c_H,\Psi_i(c_H)) \label{eq:tangent-patch-map-coefficients}
\end{equation}
whose variational form is the tangent patch problem of Sections \ref{sec:localized-reconstruction} and \ref{sec:heterogeneous-nonlinear-diffusion}. The restricted output map \(\mathcal G_{f,i}^\ell\) of Section \ref{sec:network-interpolation} is the composition of \(\Psi_i\) with a linear restriction and therefore inherits the same regularity. This regularity shows that the restricted patch outputs and their tangents are well-defined smooth targets for the network interpolation and for the chord-curvature bound.
\end{rem}


\subsection{Network and Learning Residuals}
\label{subsec:network-and-learning-residuals}

The network-interpolated method is treated as a perturbation of the deterministic localized reconstruction. The present paper only uses realized approximation quantities and residual norms. Explicit a priori estimates for these quantities, including possible Barron-type rates for the restricted patch maps, are deferred to the forthcoming localization and learning analysis.

\paragraph{Realized Local Errors.}
\label{par:realized-local-errors}
For admissible \(v_H\), assume that the restricted learned patch outputs satisfy
\begin{equation}
\|R_i v_{f,i}^\ell(P_i v_H)-g_{f,i}^{\ell,\theta_i}(P_i v_H)\|_{H^1(\omega_i^0)}
\le \varepsilon_i^0 \label{eq:local-network-state-error}
\end{equation}
The partition-of-unity stability estimate \eqref{eq:partition-of-unity-stability} gives the corresponding global state perturbation,
\begin{align}
\|M_h^\ell(v_H)-M_{h,\theta}^\ell(v_H)\|_V
&\le C_{\operatorname{pu}}
\biggl(\sum_{i\in\mathcal I}(\varepsilon_i^0)^2\biggr)^{1/2}
\label{eq:global-network-state-error}\\
\eta_{\mathrm{net}}^0
&\coloneqq
C_{\operatorname{pu}}
\biggl(\sum_{i\in\mathcal I}(\varepsilon_i^0)^2\biggr)^{1/2}
\label{eq:network-state-error-definition}
\end{align}
If derivative information is also used, the analogous active-region tangent error can be measured by
\begin{equation}
\eta_{\mathrm{net}}^1
\coloneqq
C_{\operatorname{pu}}
\biggl(\sum_{i\in\mathcal I}(C_{P,i}\varepsilon_i^1)^2\biggr)^{1/2} \label{eq:network-tangent-error-definition}
\end{equation}
where \(\varepsilon_i^1\) denotes the local derivative error in the norm of \eqref{eq:local-coarse-restriction-stability}. This tangent quantity is not needed as a separate additive term when the coarse problem is defined by \(M_{h,\theta}^\ell\) and the tangent action is computed consistently as \(DM_{h,\theta}^\ell\). It becomes relevant when a learned tangent action is used as an independent surrogate or when one compares learned and deterministic reduced residuals.

\paragraph{Learned Reconstruction Bound.}
\label{par:learned-reconstruction-bound}
When the learned reconstruction \(M_{h,\theta}^\ell\) defines the actual coarse model, the localization vector becomes
\begin{equation}
e_{\mathrm{loc},\theta}^\ell
\coloneqq
M_h(\Pi_Hu_h)-M_{h,\theta}^\ell(\Pi_Hu_h) \label{eq:network-localization-vector}
\end{equation}
By the triangle inequality and \eqref{eq:global-network-state-error},
\begin{align}
\|e_{\mathrm{loc},\theta}^\ell\|_V
&\le
\|M_h(\Pi_Hu_h)-M_h^\ell(\Pi_Hu_h)\|_V
+
\|M_h^\ell(\Pi_Hu_h)-M_{h,\theta}^\ell(\Pi_Hu_h)\|_V
\label{eq:network-localization-vector-triangle}\\
&\le
\eta_{\mathrm{loc}}(H,\ell)+\eta_{\mathrm{net}}^0
\label{eq:network-localization-vector-bound}
\end{align}
Thus Propositions \ref{prop:exact-localized-coarse-solve} and \ref{prop:inexact-coarse-solve} apply with \(\widetilde M_h^\ell=M_{h,\theta}^\ell\). In particular, if the learned coarse problem is solved inexactly and the learned error is sufficiently small that \(\gamma_{\mathrm{ch},\theta}^\ell\|u_h-\widetilde u_{\mathrm{ms},\theta}^\ell\|_V\le\tfrac12 m_A\), then
\begin{equation}
\|u_h-\widetilde u_{\mathrm{ms},\theta}^\ell\|_V
\le
\frac{2}{m_A}
\bigl(
L_A(\eta_{\mathrm{loc}}(H,\ell)+\eta_{\mathrm{net}}^0)
+
C_\Pi\|\mathcal R_{H,\theta}^\ell(\widetilde u_{H,\theta}^\ell)\|_{V_H'}
\bigr) \label{eq:network-inexact-coarse-solve-bound}
\end{equation}
Here \(\mathcal R_{H,\theta}^\ell\) is the reduced residual from \eqref{eq:error-reduced-residual} with \(\widetilde M_h^\ell=M_{h,\theta}^\ell\). There are two complementary ways to account for the learning defect in this estimate. In the bound \eqref{eq:network-inexact-coarse-solve-bound} the learned reconstruction \emph{is} the coarse model: one sets \(\widetilde M_h^\ell=M_{h,\theta}^\ell\), and learning enters \emph{a priori} through the localization term \(\eta_{\mathrm{net}}^0\) of \eqref{eq:network-localization-vector-bound}, through the \emph{learned} reduced residual \(\mathcal R_{H,\theta}^\ell\), and through the \emph{learned} curvature constant \(\gamma_{\mathrm{ch},\theta}^\ell\). Alternatively, a learned state may be viewed as a \emph{candidate} for the deterministic localized problem: one applies Proposition \ref{prop:inexact-coarse-solve} with the deterministic map \(\widetilde M_h^\ell=M_h^\ell\) and its deterministic constants, evaluated at the learned coarse state \(\widetilde u_{H,\theta}^\ell\). Then there is no separate \(\eta_{\mathrm{net}}^0\) term; the entire learning defect is folded into the single \emph{a posteriori}, computable deterministic reduced residual \(\|\mathcal R_H^\ell(\widetilde u_{H,\theta}^\ell)\|_{V_H'}\), which plays exactly the same role as the algebraic residual of an early-stopped outer Newton iteration. The two viewpoints mirror the a priori/a posteriori alternative discussed in Section \ref{subsec:interpretation-for-numerical-experiments}.


\subsection{Interpretation for Numerical Experiments}
\label{subsec:interpretation-for-numerical-experiments}

The estimates identify the quantities that should be measured independently. Oversampling studies probe the deterministic localization quantity \(\eta_{\mathrm{loc}}(H,\ell)\). Nonlinear-solver studies probe the reduced residual norm \(\|\mathcal R_H^\ell(\widetilde u_H^\ell)\|_{V_H'}\). Network studies can either report the same reduced residual as an a posteriori learning indicator, or report active-region state and tangent errors such as \(\eta_{\mathrm{net}}^0\) and \(\eta_{\mathrm{net}}^1\). The chord term is not a separate convergence defect in the final estimate; it enters through the smallness condition \(\gamma_{\mathrm{ch}}^\ell\|e_h\|_V\le\tfrac12 m_A\), which expresses that once the multiscale error is small enough, the local curvature of the reconstructed manifold is absorbed by monotonicity.

%% file: sections/numerical_experiments_v0.tex

\section{Numerical Experiments}
\label{sec:numerical-experiments}

In this section we numerically test the proposed tangent-space multiscale manifold method by applying it to the monotone model problem in Section~\ref{sec:heterogeneous-nonlinear-diffusion} with a heterogeneous coefficient. We first verify the patch-solved method, where the per-patch fine-scale corrections are computed by an inner Newton iteration. We then replace the restricted patch outputs by neural-network interpolants and solve the full learned coarse problem. Finally, we test the same stationary reconstruction in a backward-Euler discretization of a parabolic problem. We consider two strongly heterogeneous coefficients: a smooth oscillatory medium and a non-periodic discontinuous random checkerboard.

\subsection{Model Problem and Discretization}
\label{subsec:numerics-model-problem}

We use the heterogeneous monotone nonlinear diffusion model of Section \ref{sec:heterogeneous-nonlinear-diffusion} on \(\Omega=(0,1)^2\) with homogeneous Dirichlet boundary conditions: the energy \(\mathcal E_h\) \eqref{eq:model-energy}, the nonlinear form \(A=A_h\) \eqref{eq:model-nonlinear-form}, and the load \(L=F_h\) \eqref{eq:model-load-functional}, with \(a_\varepsilon\in L^\infty(\Omega)\), \(0<a_0\le a_\varepsilon\le a_1\), and nonlinearity strength \(\alpha\ge0\). The error is measured in the energy (\(H^1\)) seminorm \(\|\nabla v\|_{\Omega}\).

\paragraph{Discretization.}
\label{par:numerics-discretization}
The domain is triangulated on two nested structured meshes of \(P_1\) Lagrange elements. We fix a single fine reference mesh with \(h=1/64\), that is, \(64\) elements per side, throughout. Most of the experiments use a coarse mesh of \(N=8\) elements per side, so \(H=1/8\). We report the relative energy-norm error of the localized multiscale solution against the fine reference,
\begin{equation}
	\frac{\|\nabla(u_h-u_{\mathrm{ms}}^\ell)\|_{\Omega}}{\|\nabla u_h\|_{\Omega}}
	\label{eq:numerics-relative-error-l-to-ideal}
\end{equation}

The coarse space uses homogeneous Dirichlet data and \(\Pi_H\) is the \(L^2\)-projection realized through the cross-mesh mass constraint. All reported results use vertex patches, one per coarse vertex, with \(\varphi_{H,z}=\Lambda_z\) the coarse hat function. 

\paragraph{Coefficients.}
\label{par:numerics-coefficients}
We report two heterogeneous media, shown in Figure \ref{fig:numerics-coefficients}. The smooth oscillatory coefficient is
\begin{equation}
a_\varepsilon(x,y)=\frac{1}{2}\bigl(2+\beta\sin(2\pi x/\varepsilon)\sin(2\pi y/\varepsilon)\bigr)\quad \varepsilon=\frac{1}{8},\ \beta=1.8
\label{eq:numerics-sine-coefficient}
\end{equation}
Thus \(a_\varepsilon\in[0.1,1.9]\), a contrast ratio of \(19\). The second medium is a non-periodic random checkerboard: the unit square is partitioned into cells of size \(1/32\), and on each cell \(a_\varepsilon\) is drawn independently and uniformly from \([1,20]\) with a fixed seed. This gives a piecewise-constant field with contrast ratio up to \(20\) and no periodic structure. The fine mesh \(h=1/64\) resolves each checkerboard cell with \(2\times2\) elements. Throughout, \(f\equiv1\) and \(\alpha=1\). The non-periodicity of the checkerboard is a deliberate stress on the method, which makes no periodicity assumption.

\begin{figure}[htbp]
\centering
\includegraphics[width=0.95\textwidth]{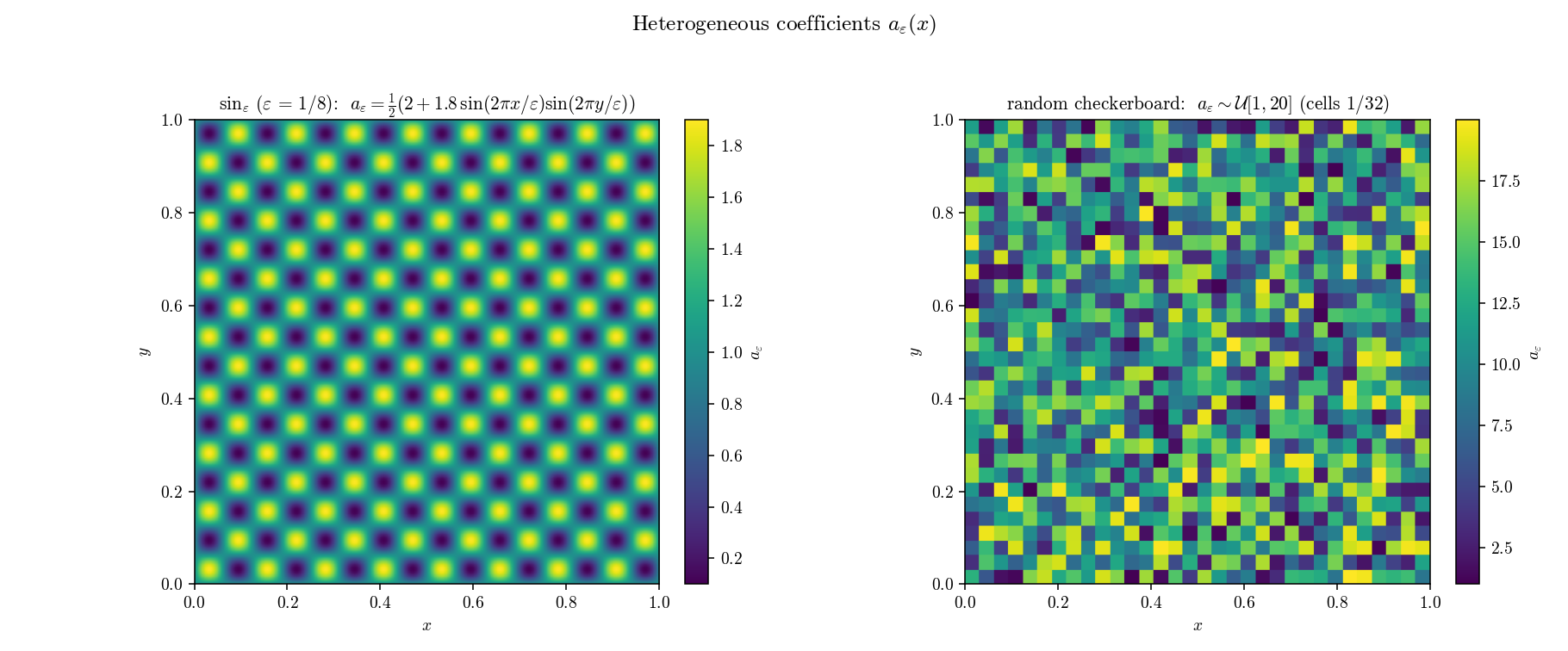}
\caption{The two heterogeneous coefficients \(a_\varepsilon(x)\). Left: the smooth \(\sin_\varepsilon\) medium with \(\varepsilon=1/8\) and \(a_\varepsilon\in[0.1,1.9]\). Right: the non-periodic random checkerboard with cells of size \(1/32\) and \(a_\varepsilon\sim\mathcal U[1,20]\).}
\label{fig:numerics-coefficients}
\end{figure}

\paragraph{The Inexact Galerkin Outer Jacobian.}
\label{par:numerics-inexact-galerkin-outer-jacobian}
Both the patch-solved and the network-interpolated coarse problems are solved by the same outer Newton iteration, with the coarse manifold residual \(r_H(u_H)[\delta v_H]=A(\widetilde M_h^\ell(u_H);D \widetilde M_h^\ell(u_H)[\delta v_H])-L(D \widetilde M_h^\ell(u_H)[\delta v_H])\). As noted in Section \ref{sec:computable-coarse-problems}, its exact derivative in a direction \(w_H\) involves the second derivative of the reconstruction; it splits into two contributions,
\begin{align}
\mathfrak J_H(u_H)[w_H,\delta v_H]
&=\mathfrak J_{H,\operatorname{gal}}^\ell(u_H)[w_H,\delta v_H]
  +\mathfrak J_{H,\operatorname{curv}}^\ell(u_H)[w_H,\delta v_H]
\label{eq:numerics-exact-outer-jacobian-split}\\
\mathfrak J_{H,\operatorname{gal}}^\ell(u_H)[w_H,\delta v_H]
&=A'\bigl(\widetilde M_h^\ell(u_H)\bigr)\bigl[D \widetilde M_h^\ell(u_H)[w_H],D \widetilde M_h^\ell(u_H)[\delta v_H]\bigr]
\label{eq:numerics-exact-outer-jacobian-galerkin}\\
\mathfrak J_{H,\operatorname{curv}}^\ell(u_H)[w_H,\delta v_H]
&=\bigl\langle\mathcal R(\widetilde M_h^\ell(u_H)),D^2\widetilde M_h^\ell(u_H)[w_H,\delta v_H]\bigr\rangle
\label{eq:numerics-exact-outer-jacobian-curvature}
\end{align}
where \(\mathcal R(u_h)=A(u_h;\cdot)-L(\cdot)\) is the fine-scale residual. The Galerkin Jacobian keeps only \eqref{eq:numerics-exact-outer-jacobian-galerkin}, so the outer iteration is an inexact Newton method of Gauss--Newton type. The motivation is threefold. First, \(J_H\) is symmetric positive definite whenever \(A'\) is. Second, the dropped term is controlled by duality,
\begin{equation}
\bigl|\mathfrak J_{H,\operatorname{curv}}^\ell(u_H)[w_H,\delta v_H]\bigr|
\le\bigl\|\mathcal R(\widetilde M_h^\ell(u_H))\bigr\|_{V_h'}\,
\bigl\|D^2\widetilde M_h^\ell(u_H)[w_H,\delta v_H]\bigr\|_V
\label{eq:numerics-curvature-duality-bound}
\end{equation}
where the second factor is bounded through the tangent regularity constant \(L_{\widetilde M,1}^\ell\) of Section \ref{sec:error-analysis}. For the ideal manifold the residual factor vanishes at the solution. For a localized or learned manifold, only the tangential part of the residual is driven to zero by the outer iteration, and the full fine residual at the computed solution remains of the size of the localization and learning defects, \(\|\mathcal R(\widetilde u_{\mathrm{ms}}^\ell)\|_{V_h'}\le L_A\|e_h\|_V\) by \eqref{eq:error-residual-at-computed-state}. The neglected curvature contribution is therefore small of the order of the reconstruction defect. Third, assembling \(D^2\widetilde M_h^\ell\) would require differentiating the per-patch tangent solves a second time and, in the learned variant of Section \ref{subsec:numerics-learned-correctors}, using second derivatives of the networks. In all experiments below the outer iteration with the Galerkin Jacobian converges in \(2\)--\(4\) steps.

\subsection{Localization and Convergence of the Patch-Solved Method}
\label{subsec:numerics-experiments-without-learning}

Here the per-patch fine-scale corrections are solved by the inner Newton iteration; no surrogate is involved. The fine-scale solution \(u_h\) in \eqref{eq:model-fine-scale-problem}, obtained by full \(P_1\) Newton on the fine mesh, is used as a reference solution. Recall that, by the exactness of the global map, the ideal (no localization) multiscale solution equals \(u_h\).

\paragraph{\boldmath Convergence of the Localized Multiscale Solution in \(\ell\).}
\label{par:numerics-convergence-in-ell}
We solve \eqref{eq:computable-coarse-problem} on a fixed coarse mesh of size \(H=1/8\) with the localization parameter ranging from \(\ell=1,\ldots,6\). The relative energy-norm error is reported in Figure \ref{fig:numerics-l-to-ideal} and Table \ref{tab:numerics-l-to-ideal}, which show exponential decay in \(\ell\). The decay rate is approximately \(1.0\) per layer for both coefficients. The random checkerboard is only marginally harder than the smooth medium, by a constant factor of about \(1.5\) and a slightly smaller rate, despite its piecewise-constant non-periodic structure. These results provide numerical evidence that the localization defect $\eta_{\mathrm{loc}}(H,\ell)$ appearing in Proposition~\ref{prop:patch-solved-localization-bound} decays exponentially in $\ell$. A rigorous a priori proof of this decay is not attempted here; it is deferred to the companion analysis manuscript.

\begin{figure}[htbp]
\centering
\includegraphics[width=0.78\textwidth]{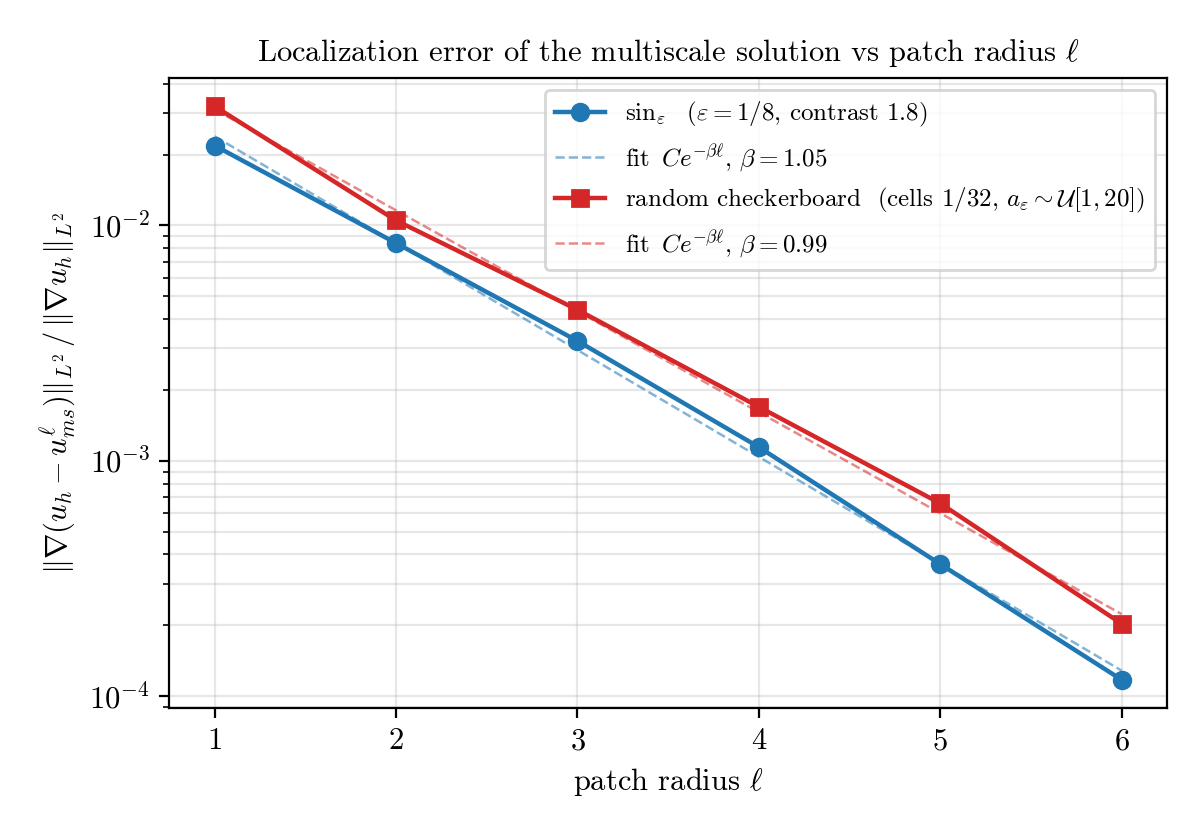}
\caption{Relative \(H^1\) error \(\|\nabla(u_h-u_{\mathrm{ms}}^\ell)\|_{\Omega}/\|\nabla u_h\|_{\Omega}\) versus patch radius \(\ell\), on a semilog scale, for the \(\sin_\varepsilon\) and random-checkerboard coefficients, \(H=1/8\), and \(h=1/64\).}
\label{fig:numerics-l-to-ideal}
\end{figure}

\begin{table}[htbp]
\centering
\begin{tabular}{|r|c|c|}
\hline
\(\ell\) & \(\sin_\varepsilon\) & Random checkerboard \\
\hline
\(1\) & \(2.18\times10^{-2}\) & \(3.21\times10^{-2}\) \\
\(2\) & \(8.41\times10^{-3}\) & \(1.05\times10^{-2}\) \\
\(3\) & \(3.23\times10^{-3}\) & \(4.38\times10^{-3}\) \\
\(4\) & \(1.14\times10^{-3}\) & \(1.69\times10^{-3}\) \\
\(5\) & \(3.64\times10^{-4}\) & \(6.59\times10^{-4}\) \\
\(6\) & \(1.17\times10^{-4}\) & \(2.03\times10^{-4}\) \\
\hline
Fit \(\beta\) & \(\approx1.05\) & \(\approx0.99\) \\
\hline
\end{tabular}
\caption{Relative \(H^1\) error \(\|\nabla(u_h-u_{\mathrm{ms}}^\ell)\|_{\Omega}/\|\nabla u_h\|_{\Omega}\).}
\label{tab:numerics-l-to-ideal}
\end{table}

\paragraph{\boldmath Convergence in the Coarse Mesh Size \(H\).}
\label{par:numerics-convergence-in-h}
We next fix the localization radius and refine the coarse mesh. Keeping the fine reference at \(h=1/64\), we sweep \(H=1/N\in\{1/4,1/8,1/16,1/32\}\), for \(\ell=1,\ldots,4\), with patch-solved fine-scale corrections. The relative energy-norm error against \(u_h\) is reported in Figure \ref{fig:numerics-convergence-h} and Table \ref{tab:numerics-convergence-h}.
We note that as $H$ decreases, the patch radius $\ell$ needs to grow to reduce the error. The dashed envelope marks the graph for $\ell \sim \log(1/H)$. Following this envelope, the error decreases at the linear rate $\mathcal{O}(H)$ in the energy ($H^1$) norm generally expected for such methods; this rate is observed numerically here, while its rigorous justification is deferred to the companion analysis manuscript. The same behaviour holds for both coefficients.


Furthermore, the black curve is plain coarse \(P_1\) FEM for comparison. It sits far above every localized curve and does not converge while \(H\) fails to resolve the fine scale. For the \(\sin_\varepsilon\) medium, the relative FEM error exceeds \(100\%\) for \(H\ge\varepsilon=1/8\) and only begins to fall once \(H<\varepsilon\). By contrast, the localized method reaches about \(1\%\) already at \(H=1/8\) with a few fine-scale correction layers.

\begin{figure}[htbp]
\centering
\includegraphics[width=0.98\textwidth]{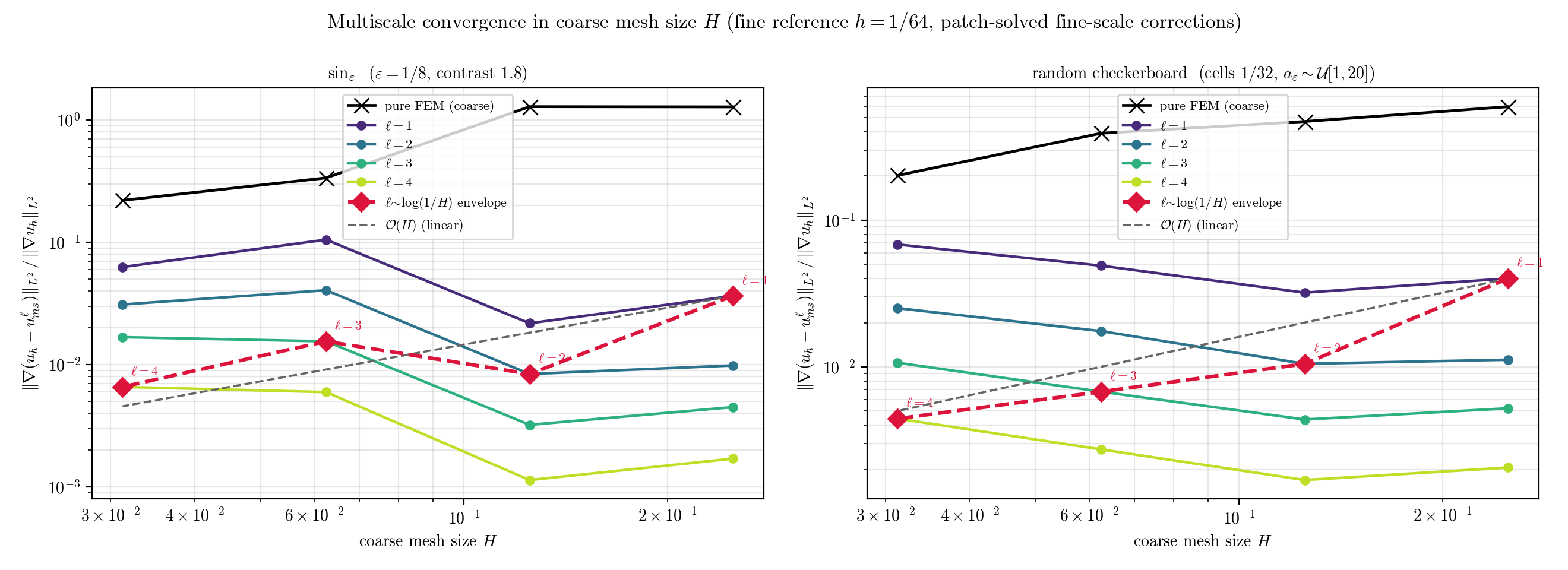}
\caption{Relative \(H^1\) error \(\|\nabla(u_h-u_{\mathrm{ms}}^\ell)\|_{\Omega}/\|\nabla u_h\|_{\Omega}\) versus coarse mesh size \(H\), on a log--log scale, with one curve per localization radius \(\ell\). Left: \(\sin_\varepsilon\). Right: random checkerboard. The black curve is uncorrected coarse \(P_1\) FEM. The dashed envelope grows \(\ell\) by one layer per halving of \(H\), and the gray dashed line is the \(\mathcal O(H)\) reference.}
\label{fig:numerics-convergence-h}
\end{figure}

\begin{table}[htbp]
\centering
\small
\begin{tabular}{|l|c|c|c|c|}
\hline
 & \(H=1/4\) & \(H=1/8\) & \(H=1/16\) & \(H=1/32\) \\
\hline
\multicolumn{5}{|l|}{\(\sin_\varepsilon\)} \\
\hline
Pure FEM & \(1.28\) & \(1.29\) & \(3.37\times10^{-1}\) & \(2.21\times10^{-1}\) \\
\(\ell=1\) & \(3.66\times10^{-2}\) & \(2.18\times10^{-2}\) & \(1.05\times10^{-1}\) & \(6.30\times10^{-2}\) \\
\(\ell=2\) & \(9.88\times10^{-3}\) & \(8.41\times10^{-3}\) & \(4.07\times10^{-2}\) & \(3.11\times10^{-2}\) \\
\(\ell=3\) & \(4.51\times10^{-3}\) & \(3.23\times10^{-3}\) & \(1.56\times10^{-2}\) & \(1.68\times10^{-2}\) \\
\(\ell=4\) & \(1.71\times10^{-3}\) & \(1.14\times10^{-3}\) & \(5.98\times10^{-3}\) & \(6.59\times10^{-3}\) \\
\hline
\multicolumn{5}{|l|}{Random checkerboard} \\
\hline
Pure FEM & \(5.94\times10^{-1}\) & \(4.71\times10^{-1}\) & \(3.91\times10^{-1}\) & \(2.02\times10^{-1}\) \\
\(\ell=1\) & \(4.01\times10^{-2}\) & \(3.21\times10^{-2}\) & \(4.90\times10^{-2}\) & \(6.83\times10^{-2}\) \\
\(\ell=2\) & \(1.12\times10^{-2}\) & \(1.05\times10^{-2}\) & \(1.75\times10^{-2}\) & \(2.52\times10^{-2}\) \\
\(\ell=3\) & \(5.23\times10^{-3}\) & \(4.38\times10^{-3}\) & \(6.78\times10^{-3}\) & \(1.07\times10^{-2}\) \\
\(\ell=4\) & \(2.06\times10^{-3}\) & \(1.69\times10^{-3}\) & \(2.74\times10^{-3}\) & \(4.44\times10^{-3}\) \\
\hline
\end{tabular}
\caption{Relative \(H^1\) error \(\|\nabla(u_h-u_{\mathrm{ms}}^\ell)\|_{\Omega}/\|\nabla u_h\|_{\Omega}\) versus \(H\) and \(\ell\), using vertex patches, patch-solved fine-scale corrections, and the fixed fine reference \(h=1/64\).}
\label{tab:numerics-convergence-h}
\end{table}

\paragraph{Fine-Scale Correction Localization Error on a Patch.}
\label{par:numerics-corrector-localization-error-on-a-patch}
To inspect localization at the level of a single fine-scale correction, fix \(u_H=\Pi_Hu_h\) and consider, on one patch \(i\), the difference between the exact global fine-scale correction \(v_f(u_H)=u_h-u_H\) restricted to the patch and the patch-solved local fine-scale correction \(v_{f,i}^\ell(P_i u_H)\),
\begin{equation}
d_i=v_f(\Pi_Hu_h)\big|_{\omega_i^\ell}-v_{f,i}^\ell(P_i\Pi_Hu_h)
\label{eq:numerics-single-patch-corrector-error}
\end{equation}
The local fine-scale correction is pinned to zero on the artificial patch boundary \(\partial\omega_i^\ell\), while the global one is not. Therefore, \(d_i\) forms a boundary layer at \(\partial\omega_i^\ell\) and decays toward the patch seed, as shown in Figure \ref{fig:numerics-patch-localization}. As \(\ell\) grows, the patch spreads over more coarse elements and the error layer moves outward.

The physically meaningful measure is the error over the active support \(\omega_i^0\), the star of the vertex \(z\), since the partition-of-unity weight \(\varphi_{H,i}\) annihilates the fine-scale correction outside \(\omega_i^0\) before it enters \eqref{eq:localized-reconstruction-map}. The support-restricted seminorm \(\|\nabla d_i\|_{L^2(\omega_i^0)}\) decays exponentially in \(\ell\), 
with rate \(\beta\approx0.6\). By contrast, the seminorm over the full growing patch \(\|\nabla d_i\|_{L^2(\omega_i^\ell)}\) increases with \(\ell\), 
because the integration domain grows. These results motivate the support-restricted contributions in the assembled reconstruction map $M^\ell_h$. 

\begin{figure}[htbp]
\centering
\includegraphics[width=0.95\textwidth]{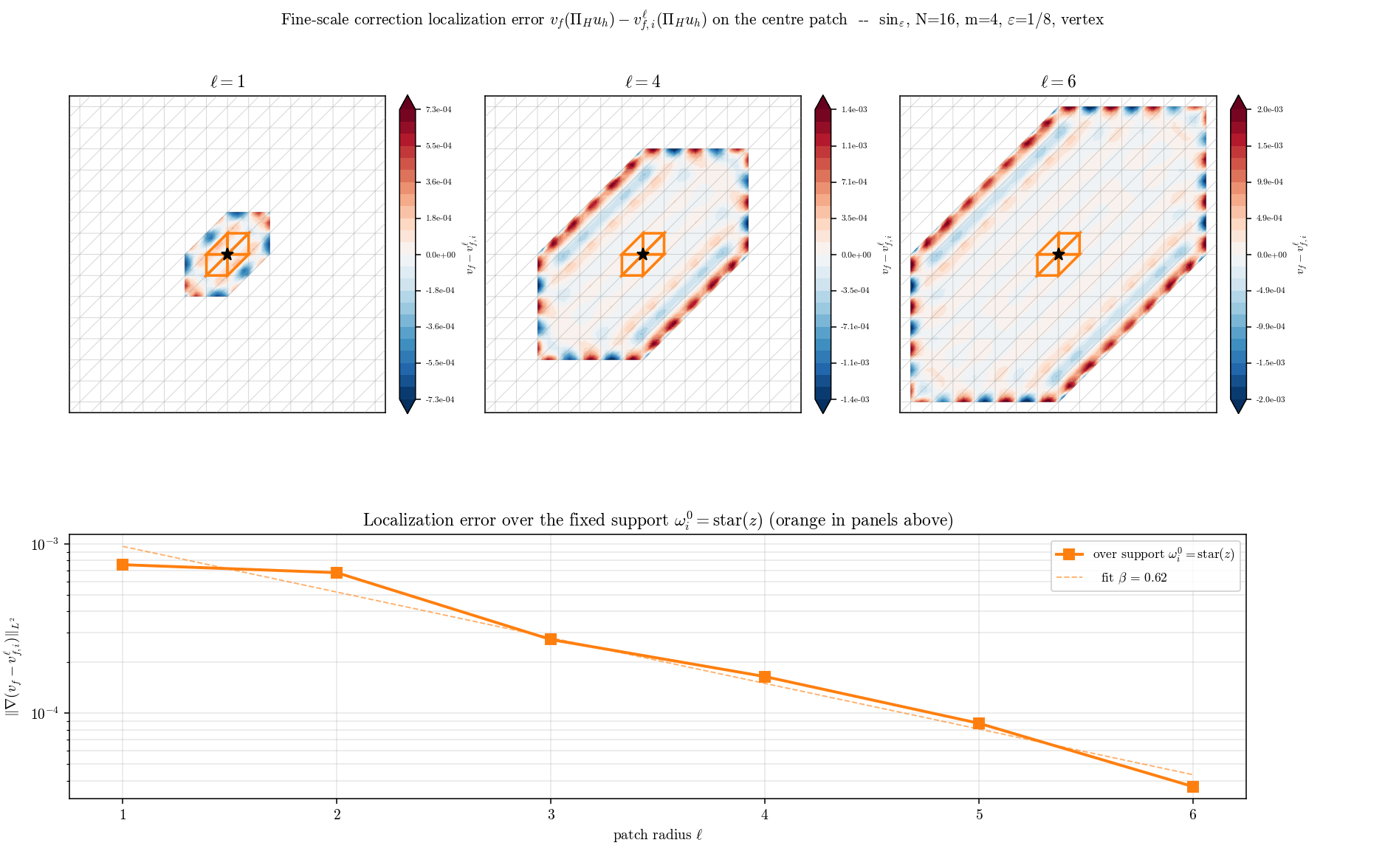}
\caption{Fine-scale correction localization error \(d_i=v_f-v_{f,i}^\ell\) on the center patch for \(\ell=1,4,6\). Top: common spatial frame with the coarse mesh, active support \(\omega_i^0\), and patch seed, each panel colour-scaled to its own range so the boundary layer is visible. Bottom: the support-restricted error \(\|\nabla d_i\|_{L^2(\omega_i^0)}\) versus \(\ell\) with its exponential fit. Setup: \(\sin_\varepsilon\), \(\varepsilon=1/8\), \(N=16\), \(m=4\), \(h=1/64\), and vertex patches.}
\label{fig:numerics-patch-localization}
\end{figure}

\subsection{Network-Interpolated Fine-Scale Corrections}
\label{subsec:numerics-learned-correctors}

We replace each restricted patch-output map by a small neural network \(\mathcal N_{f,i}^{\ell,\theta_i}\), giving the network-interpolated method.

\paragraph{Network and Training Setup.}
\label{par:numerics-network-and-training-setup}
Each patch uses a shallow multilayer perceptron with a single hidden layer and \(\tanh\) activation,
\begin{equation}
\mathcal N_{f,i}^{\ell,\theta_i}(x)=W_2\tanh(W_1\widehat x+b_1)+b_2
\label{eq:numerics-one-hidden-layer-network}
\end{equation}
where \(\widehat x\) is the affinely normalized input. The input \(x=P_i u_H\) is the coarse function restricted to the patch's coarse degrees of freedom. Coarse degrees of freedom on the global Dirichlet boundary $\partial\Omega$ are pinned to zero and carry zero sensitivity. The output is the fine-scale correction on the free (non-Dirichlet) fine nodes of the patch. The hidden width is denoted by \(n_{\mathrm{hid}}\), with \(n_{\mathrm{hid}}=128\) in the production runs. All weights \(\{W_1,b_1,W_2,b_2\}\) are trained. The tangent action required by the outer Newton solve is the analytic one-layer Jacobian,
\begin{equation}
D\mathcal N_{f,i}^{\ell,\theta_i}=\operatorname{diag}(\sigma_y)W_2\operatorname{diag}(1-\tanh^2)W_1\operatorname{diag}(1/\sigma_x)
\label{eq:numerics-network-analytic-jacobian}
\end{equation}
so that, thanks to the shallow single-hidden-layer architecture, no automatic differentiation is needed at solve time.

For each patch, the inputs \(\{u_H^m\}\) are Sobol space-filling points in a box around a centre \(u_H^{\mathrm c}\), perturbing only the active local coarse degrees of freedom. The box half-width is \(0.5|u_H^{\mathrm c}|\), that is, relative radius \(0.5\), per active degree of freedom. Each target \(v_{f,i}^\ell(P_i u_H^m)\) is produced by the inner-Newton patch solve, so generating the dataset is the dominant offline cost. The production runs use \(N_{\mathrm{samp}}=512\) Sobol samples per patch. 

Choosing the box centre \(u_H^{\mathrm c}\) is essential since each network is accurate only near the coarse states it was sampled at, so \(u_H^{\mathrm c}\) must lie where the outer Newton iteration \eqref{eq:computable-coarse-problem} actually evaluates the networks at solve time. We choose \(u_H^{\mathrm c}=u_H^\ell\), the coarse state of the deterministic patch-solved multiscale method, which is precisely the state the learned outer solve converges to. Obtaining \(u_H^\ell\) requires one offline solve of the untrained patch-solved problem, which may seem an unnatural prerequisite for a single solve. Its cost, however, is marginal: the offline stage is dominated by generating the training data, which already runs the deterministic patch solver \(N_{\mathrm{samp}}\) times per patch, so the one extra coarse solve that fixes the centre is negligible. The real justification is amortization: once trained, the networks are reused across many online solves with no further patch solves. This is exploited in Section \ref{subsec:numerics-parabolic-problem}, where the networks are re-used at every step of a time-dependent problem.

The primary loss is the relative \(H^1(\omega_i^\ell)\) energy-seminorm of the output residual in the patch stiffness metric. We add Jacobian, or Sobolev, supervision that matches the network's analytic input Jacobian to the patch-solved fine-scale correction tangent \(D v_{f,i}^\ell\), available from the patch tangent block. This term is measured in the same \(H^1\) patch-energy metric \(A\) as the value loss: writing the Cholesky factorization \(A=LL^\top\), we penalize \(\|L^\top(D\mathcal N_{f,i}^{\ell,\theta_i}-D v_{f,i}^\ell)\|^2\), so that a plain least-squares loss equals the energy-norm error of the tangent mismatch. It uses weight \(\lambda=1\). This directly trains the tangent \(D M_{h,\theta}^\ell\) used by the outer Newton solve. Optimization uses Adam with learning rate \(10^{-3}\), full-batch training for \(2000\) epochs, an \(80/20\) train-validation split, and best-validation-error checkpointing every \(50\) epochs. Training uses JAX/optax; the deployed network, including the forward pass and analytic tangent, uses only \texttt{numpy}/\texttt{scipy}.

\paragraph{Jacobian Supervision and Learnability.}
\label{par:numerics-exploratory-ablations}
The hidden width, sample count, and sampling-box radius were fixed by preliminary single-patch tuning: the out-of-sample error---measured on a fresh Sobol set drawn with an independent seed and disjoint from the training samples---plateaus beyond a moderate width and training-set size, and a tighter box lowers the in-distribution error at the cost of poor out-of-box extrapolation, which motivates the generous radius \(0.5\). On out-of-sample coarse states the learned fine-scale correction reproduces the patch-solved one well, and the \(L^2\)-orthogonality constraint is approximately inherited even though it is not enforced.

We compare value-only (\(\lambda=0\)) and value-plus-Jacobian (\(\lambda=1\)) supervision in the production setting: support-restricted networks at \(h=1/64\), for both coefficients, with the out-of-sample error measured over the support and centred at the multiscale coarse state \(u_H^\ell\) (Table \ref{tab:numerics-learnability}, median over the interior patches). Jacobian supervision lowers both the value error and, more importantly, the tangent error that the outer Newton consumes: by about \(3.4\times\) (value) and \(3.9\times\) (tangent) for the smooth \(\sin_\varepsilon\) medium, and \(5.5\times\) and \(5.9\times\) for the checkerboard. The piecewise-constant fine-scale corrections remain markedly easier to learn than the smooth oscillatory ones at both settings.

\begin{table}[htbp]
\centering
\begin{tabular}{|l|c|c|c|c|}
\hline
 & \multicolumn{2}{c|}{Value Relative \(H^1\)} & \multicolumn{2}{c|}{Tangent Relative Error} \\
\hline
Coefficient & \(\lambda=0\) & \(\lambda=1\) & \(\lambda=0\) & \(\lambda=1\) \\
\hline
\(\sin_\varepsilon\), smooth oscillatory & \(\sim9.1\%\) & \(\sim2.7\%\) & \(\sim24\%\) & \(\sim6.0\%\) \\
Checkerboard, piecewise constant & \(\sim6.2\%\) & \(\sim1.1\%\) & \(\sim16\%\) & \(\sim2.8\%\) \\
\hline
\end{tabular}
\caption{Jacobian supervision in the production setting: support-restricted networks at \(H=1/8\), \(h=1/64\), \(\ell=2\), hidden width \(n_{\mathrm{hid}}=128\), \(N_{\mathrm{samp}}=512\), and box radius \(0.5\), centred at the multiscale coarse state \(u_H^\ell\). Out-of-sample relative errors (median over the interior patches, measured over the support) for value-only training, \(\lambda=0\), and value-plus-Jacobian training, \(\lambda=1\).}
\label{tab:numerics-learnability}
\end{table}

\paragraph{Full Learned Solve.}
\label{par:numerics-full-network-interpolated-solve}
We run the complete network-interpolated method: one network is trained per active patch, and the outer Newton problem \eqref{eq:computable-coarse-problem} is solved with the learned reconstruction \(M_{h,\theta}^\ell\) and its analytic tangent \(D M_{h,\theta}^\ell\). The resulting solution is
\begin{equation}
u_{\mathrm{ms},\theta}^\ell=M_{h,\theta}^\ell(u_{H,\theta}^\ell)
\label{eq:numerics-learned-solution}
\end{equation}
We report the deviation from the patch-solved solution and, for context, the error of both methods against the fine reference. All \(81\) vertex patches are trained with \(\ell=2\), hidden width \(n_{\mathrm{hid}}=128\), \(N_{\mathrm{samp}}=512\), and \(\lambda=1\).

Only the restriction of each fine-scale correction to \(\operatorname{supp}\varphi_{H,i}=\omega_i^0\) enters \eqref{eq:localized-reconstruction-map}, since \(\varphi_{H,i}\) annihilates every node outside \(\omega_i^0\). We therefore shrink each network output layer to the support nodes. At \(h=1/64\), this means about \(136\) active output nodes out of about \(1030\) patch nodes on average. Besides cutting the parameter count of the dominant \(W_2\) block to \(1.70\times10^6\) over all patches, this concentrates capacity on the nodes that survive multiplication by \(\varphi_{H,i}\) and improves accuracy. All results below use these support-restricted networks.


With \(u_H^\ell\)-centred, support-restricted networks, the learned method reproduces the patch-solved multiscale solution to \(0.55\%\) for the \(\sin_\varepsilon\) medium and \(0.12\%\) for the checkerboard. The outer Newton solve with the learned reconstruction converges in \(3\)--\(4\) iterations, and the recovered coarse state is within \(0.02\)--\(0.03\%\) of the patch-solved one. End-to-end against the fine reference, the learned solve barely degrades the patch-solved accuracy, from \(0.84\%\) to \(1.03\%\) for the \(\sin_\varepsilon\) medium and from \(1.05\%\) to \(1.06\%\) for the checkerboard; see Table \ref{tab:numerics-full-solve} and Figure \ref{fig:numerics-full-solve}.

\begin{table}[htbp]
\centering
\small
\begin{tabular}{|l|c|c|c|c|c|}
\hline
Coefficient & L/MS & MS/F & L/F & Coarse & Its \\
\hline
\(\sin_\varepsilon\) & \(5.49\times10^{-3}\) & \(0.84\times10^{-2}\) & \(1.03\times10^{-2}\) & \(2.4\times10^{-4}\) & \(4\) \\
Checkerboard & \(1.23\times10^{-3}\) & \(1.05\times10^{-2}\) & \(1.06\times10^{-2}\) & \(2.8\times10^{-4}\) & \(3\) \\
\hline
\end{tabular}
\caption{Full network-interpolated solve compared with the patch-solved method. The setup uses $H=1/8$, \(\ell=2\), vertex patches, hidden width \(n_{\mathrm{hid}}=128\), \(N_{\mathrm{samp}}=512\), \(\lambda=1\), support-restricted output, \(u_H^\ell\)-centred training, and \(h=1/64\). Errors are relative \(H^1\) errors. The columns L/MS, MS/F, and L/F denote learned versus patch-solved multiscale, multiscale versus fine reference, and learned multiscale versus fine reference.}
\label{tab:numerics-full-solve}
\end{table}

\begin{figure}[htbp]
\centering
\includegraphics[width=0.9\textwidth]{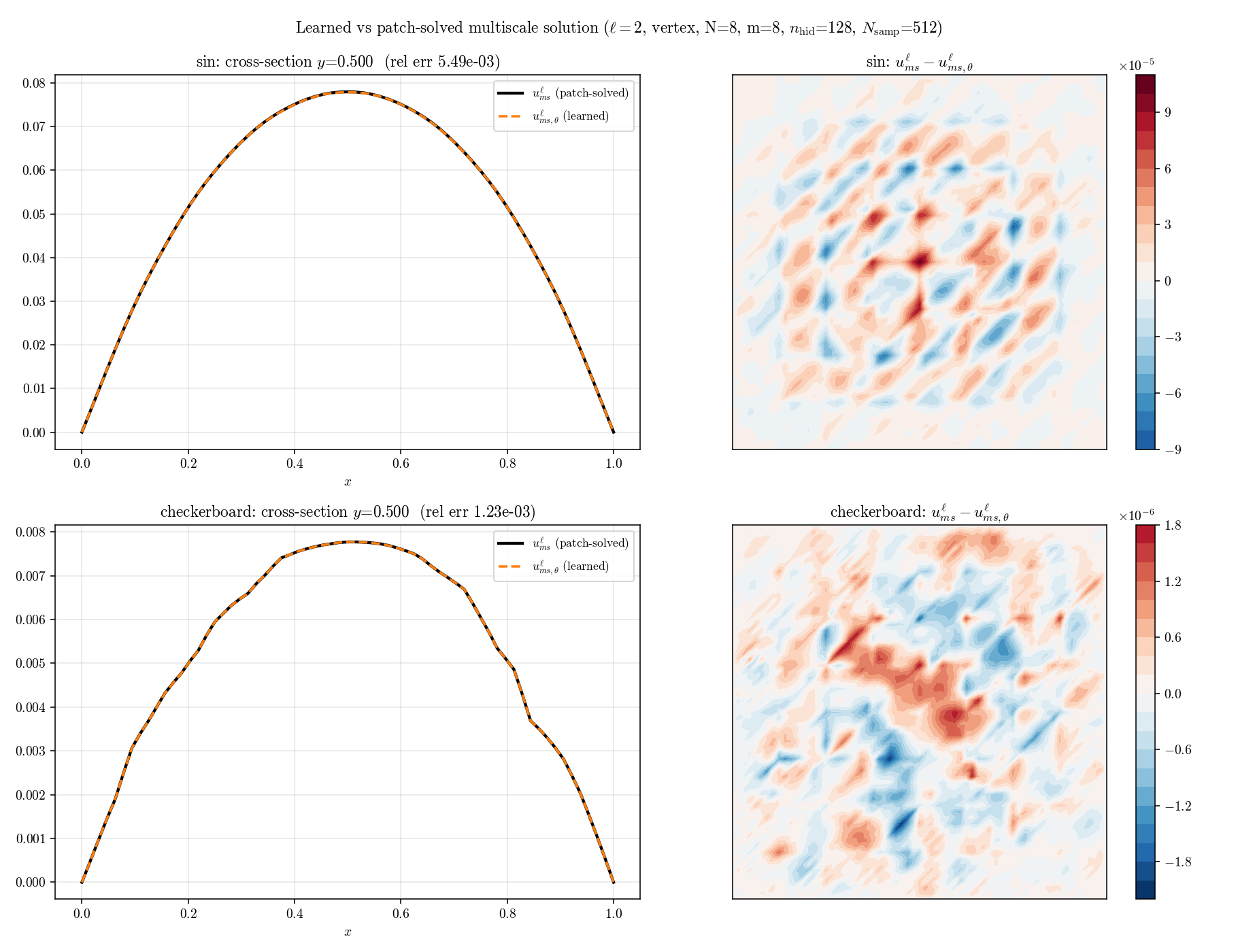}
\caption{Learned versus patch-solved multiscale solution with support-restricted networks. Left: cross-section at \(y=0.5\) of \(u_{\mathrm{ms}}^\ell\) and \(u_{\mathrm{ms},\theta}^\ell\). Right: difference field. Top: \(\sin_\varepsilon\). Bottom: random checkerboard.}
\label{fig:numerics-full-solve}
\end{figure}

\paragraph{Per-Patch Validation of the Restricted Networks.}
\label{par:numerics-patch-validation}
Beyond the assembled solve, we validate the trained restricted networks at the level of individual patches. For each trained patch we compare the learned fine-scale correction to the patch-solved Newton one on a fresh out-of-sample Sobol set centred at the multiscale coarse state \(u_H^\ell\), measuring the relative \(H^1\) error over the support \(\omega_i^0\) --- the only part of each correction that survives the partition-of-unity blend in \eqref{eq:localized-reconstruction-map}. The per-patch out-of-sample median error reaches about \(5\%\) for the \(\sin_\varepsilon\) medium (overall median \(\approx2\%\)) and stays below \(1.6\%\) for the checkerboard (overall median \(\approx0.7\%\)). The heatmaps and seed cross-sections confirm that the network reproduces the exact correction on the support, and the per-patch error bars (Figures \ref{fig:numerics-validate-patch-network} and \ref{fig:numerics-validate-patch-network-checkerboard}) show the piecewise-constant corrections are easier to learn, in agreement with the aggregate accuracy above.

\begin{figure}[htbp]
\centering
\includegraphics[width=0.9\textwidth]{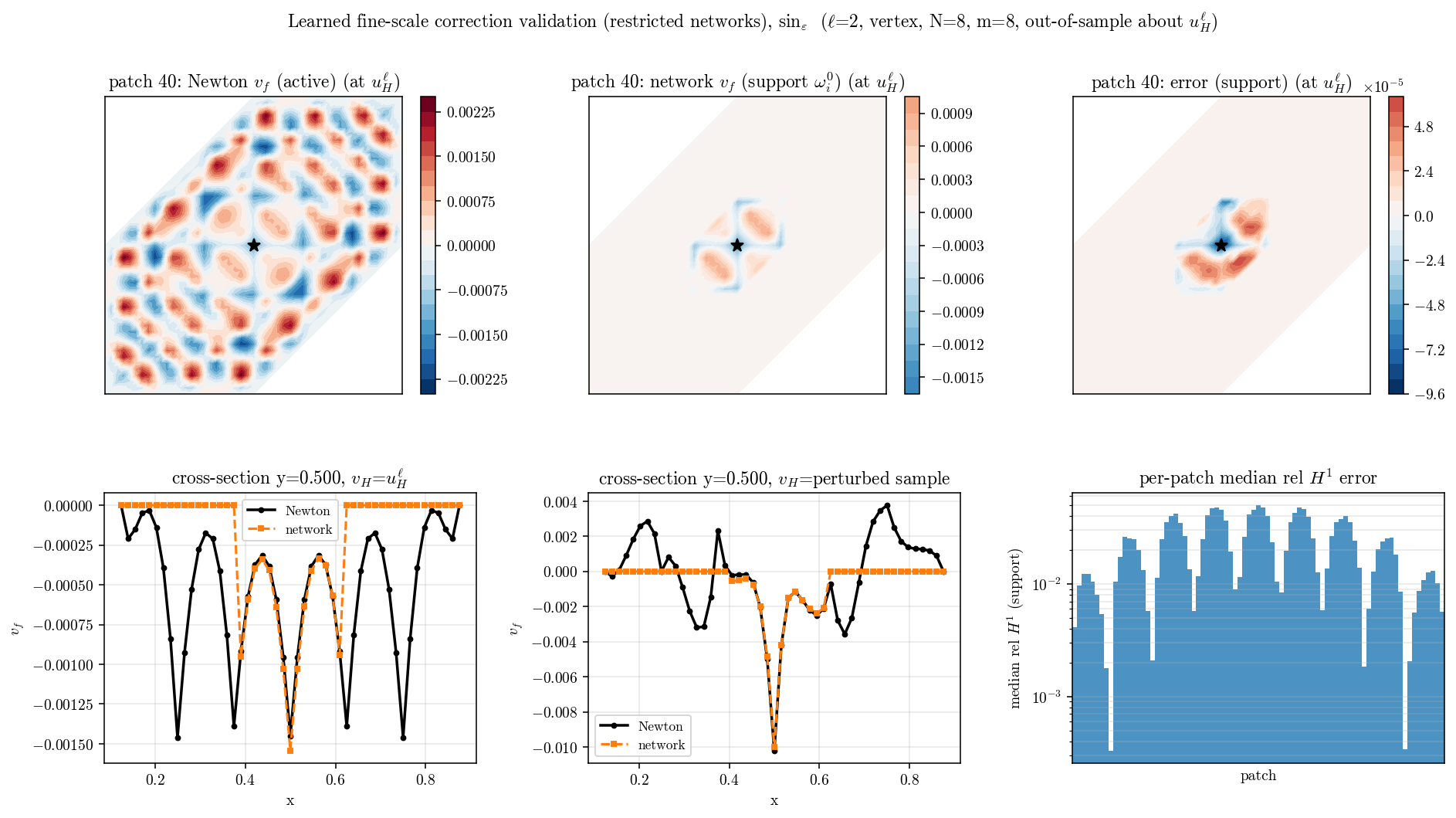}
\caption{Per-patch validation of the support-restricted networks for the \(\sin_\varepsilon\) medium, on an out-of-sample Sobol set centred at the multiscale coarse state \(u_H^\ell\). Top: the patch-solved Newton fine-scale correction on the active patch, the network output on the support \(\omega_i^0\), and their difference. Bottom: cross-sections through the patch seed (at \(u_H^\ell\) and at a perturbed sample) and the per-patch out-of-sample median relative \(H^1\) error over the support.}
\label{fig:numerics-validate-patch-network}
\end{figure}

\begin{figure}[htbp]
\centering
\includegraphics[width=0.9\textwidth]{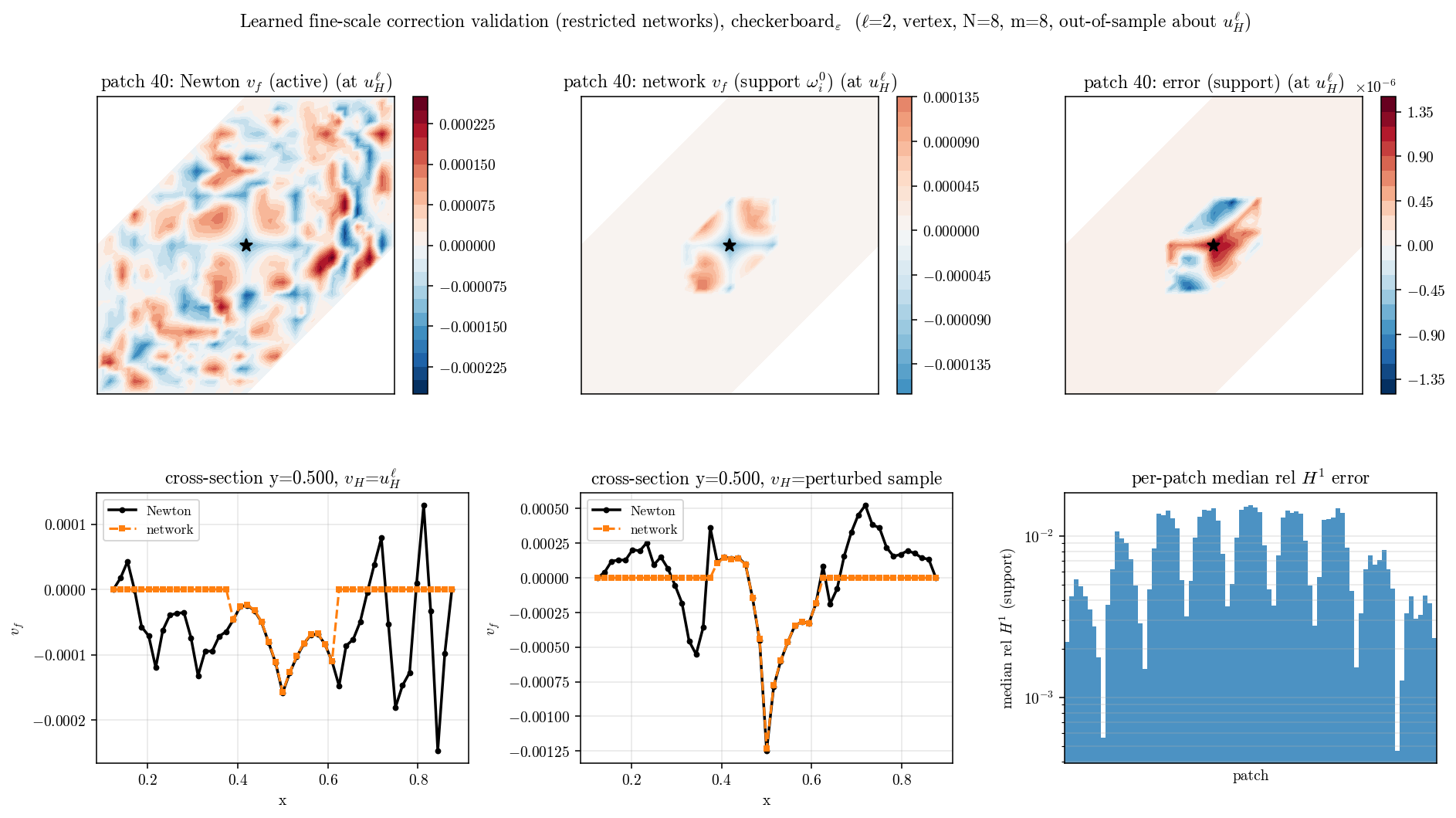}
\caption{Per-patch validation of the support-restricted networks for the random checkerboard, in the same format as Figure \ref{fig:numerics-validate-patch-network}. The per-patch out-of-sample median relative \(H^1\) error over the support stays below \(1.6\%\) (overall median \(\approx0.7\%\)), smaller than for the smooth medium, consistent with the piecewise-constant corrections being easier to learn.}
\label{fig:numerics-validate-patch-network-checkerboard}
\end{figure}

\subsection{Parabolic Problem by Backward Euler}
\label{subsec:numerics-parabolic-problem}

\paragraph{Model Problem.}
\label{par:numerics-parabolic-model-problem}
As a more demanding test, we add a time derivative. On \(\Omega=(0,1)^2\times(0,T]\), we seek \(u(\cdot,t)\) with
\begin{equation}
\partial_t u-\nabla\cdot\bigl(a_\varepsilon(1+\alpha|\nabla u|^2)\nabla u\bigr)=f\quad \text{in }\Omega,\quad u=0\quad \text{on }\partial\Omega,\quad u(\cdot,0)=u_0
\label{eq:numerics-parabolic-problem}
\end{equation}
using the same monotone nonlinear diffusion as in Section \ref{subsec:numerics-model-problem}, with \(f\equiv1\) and \(\alpha=1\). We use either a smooth bump \(u_0=x(x-1)y(y-1)\) or \(u_0\equiv0\).

\paragraph{Fine Reference Scheme.}
\label{par:numerics-fine-reference-scheme}
Backward Euler with step \(\tau\) on the fine space reads: given \(u_h^{n-1}\in V_h\), find \(u_h^n\in V_h\) with
\begin{equation}
\frac{1}{\tau}(u_h^n-u_h^{n-1},v_h)_\Omega + A(u_h^n;v_h) = (f,v_h)_\Omega\quad \forall v_h\in V_h
\label{eq:numerics-backward-euler-fine}
\end{equation}
This problem is solved using a Newton iteration at each step, and the sequence \(\{u_h^n\}\) is the reference solution.

\paragraph{Multiscale Approximation.}
\label{par:numerics-parabolic-multiscale-approximation}
For the multiscale approach, the fine-scale corrections---and hence \(M_h^\ell\) and \(D M_h^\ell\)---are the time-independent stationary ones. The mass term and the history in the parabolic problem enter only the coarse manifold equation. This means that the learned corrections are computed once in an offline stage and can then be reused in every time step, giving the fine-scale corrections by a simple forward pass.

For the multiscale solution each step solves the problem: find \(u_H^n\in V_H\) such that, for all \(\delta v_H\in V_H\),
\begin{align}
&\frac{1}{\tau}\bigl(M_h^\ell(u_H^n)-u_{\mathrm{ms}}^{n-1},D M_h^\ell(u_H^n)[\delta v_H]\bigr)_\Omega \notag\\
&\qquad +A\bigl(M_h^\ell(u_H^n);D M_h^\ell(u_H^n)[\delta v_H]\bigr)
=\bigl(f,D M_h^\ell(u_H^n)[\delta v_H]\bigr)_\Omega
\quad \forall \delta v_H\in V_H
\label{eq:numerics-backward-euler-coarse}
\end{align}
and the reconstructed solution is $u^n_{\mathrm{ms}} = M^\ell_h(u^n_H)$. In matrix form, with \(\mathbf M_h\) the fine mass matrix, \(A_h(\cdot)\) the nonlinear operator action, \(b_h\) the load vector, and history \(u_{\mathrm{ms}}^{n-1}=M_h^\ell(u_H^{n-1})\), the equation is
\begin{equation}
D M_h^\ell(u_H^n)^\top\biggl[\frac{1}{\tau}\mathbf M_h\bigl(M_h^\ell(u_H^n)-u_{\mathrm{ms}}^{n-1}\bigr)+A_h(M_h^\ell(u_H^n))-b_h\biggr]=0
\label{eq:numerics-backward-euler-coarse-matrix-form}
\end{equation}
It is solved by Newton with the Galerkin Jacobian \(J_H(v_H)=(D M_h^\ell(v_H))^\top(\tau^{-1}\mathbf M_h+A'(M_h^\ell(v_H)))D M_h^\ell(v_H)\). Relative to the stationary solve \eqref{eq:computable-coarse-problem}, the only additions are the mass-history term in the residual and \(\tau^{-1}\mathbf M_h\) in the Jacobian. As in the stationary case, the omitted curvature term pairs the full backward-Euler fine residual, now including the mass-history term, with \(D^2M_h^\ell\); it is of the size of the reconstruction defect at convergence, and dropping it does not affect the converged time steps. At each step, the network-multiscale solution \(u_{\mathrm{ms},\theta}^n\) is compared to the patch-solved multiscale solution \(u_{\mathrm{ms}}^n\) and to the fine reference \(u_h^n\).

\paragraph{Results and Distribution Shift.}
\label{par:numerics-parabolic-results-and-distribution-shift}
With \(\tau=0.02\) over \(10\) steps, Figures \ref{fig:numerics-parabolic-sin} and \ref{fig:numerics-parabolic-checkerboard} show that the network-multiscale solution reproduces the patch-solved multiscale solution to its stationary accuracy once the trajectory lies in the networks' training neighborhood. The error is approximately \(0.5\%\) for the \(\sin_\varepsilon\) medium and \(0.12\%\) for the checkerboard. The solution tracks the fine reference to approximately \(1.0\%\) for the \(\sin_\varepsilon\) medium and \(1.06\%\) for the checkerboard, close to the patch-solved multiscale discretization error (\(0.84\%\) and \(1.05\%\), respectively), so the learned interpolation adds little on top of the discretization floor. The outer Newton solve converges to tolerance \(10^{-8}\) at every step, requiring \(1\)--\(3\) iterations after relaxation.

The first-step accuracy is governed by how far the initial coarse state lies from the steady state, which is the networks' training centre. This distance is measured by \(\|u_H^n-u_H^{\mathrm{stat}}\|/\|u_H^{\mathrm{stat}}\|\) in the right panels of Figures \ref{fig:numerics-parabolic-sin} and \ref{fig:numerics-parabolic-checkerboard}. Which initial condition is benign is coefficient-dependent. For the \(\sin_\varepsilon\) medium, the bump starts close to the steady state, at distance \(0.16\), and the network error is uniform at about \(0.5\%\). The zero initial condition starts farther away, at distance \(0.72\), and the network error spikes to about \(6.7\%\) at the first step before recovering. For the checkerboard, the situation reverses: the zero initial state starts close, at distance \(0.21\), while the bump starts far, at distance \(1.38\), and spikes to about \(4.4\%\) before recovering. This is an accuracy effect, not a solver effect; the mass term \(\tau^{-1}\mathbf M_h\) keeps the coarse Jacobian well conditioned.

\begin{figure}[htbp]
\centering
\includegraphics[width=0.95\textwidth]{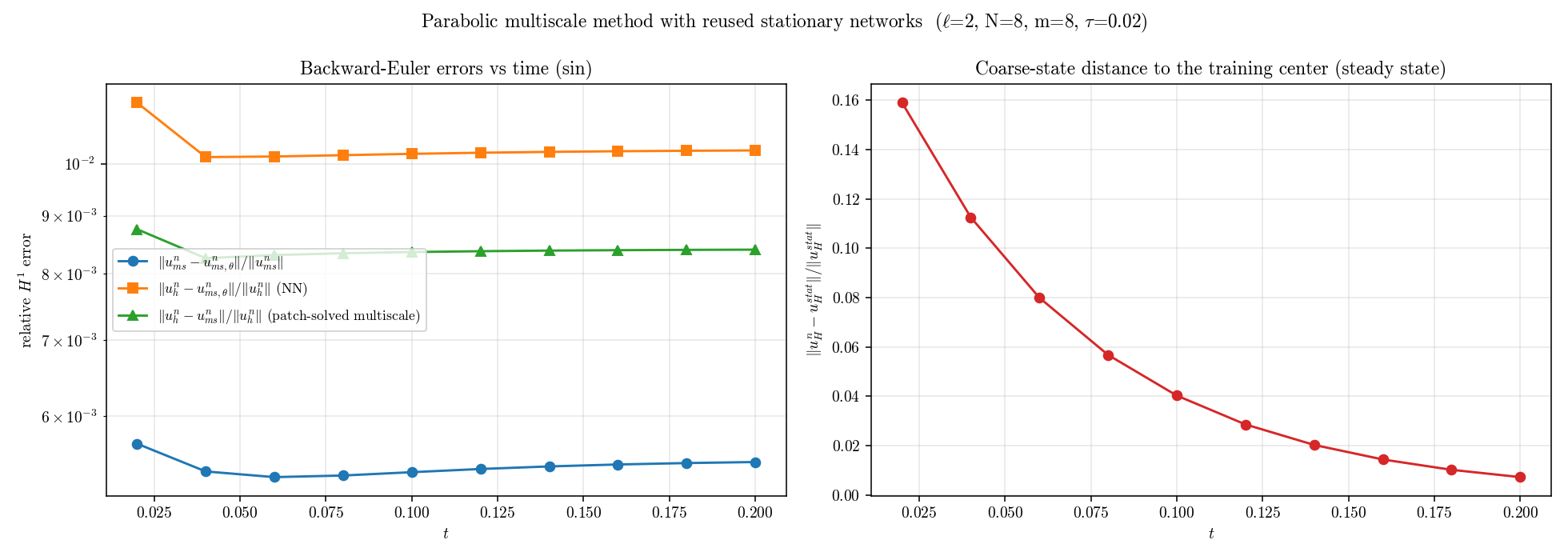}
\includegraphics[width=0.95\textwidth]{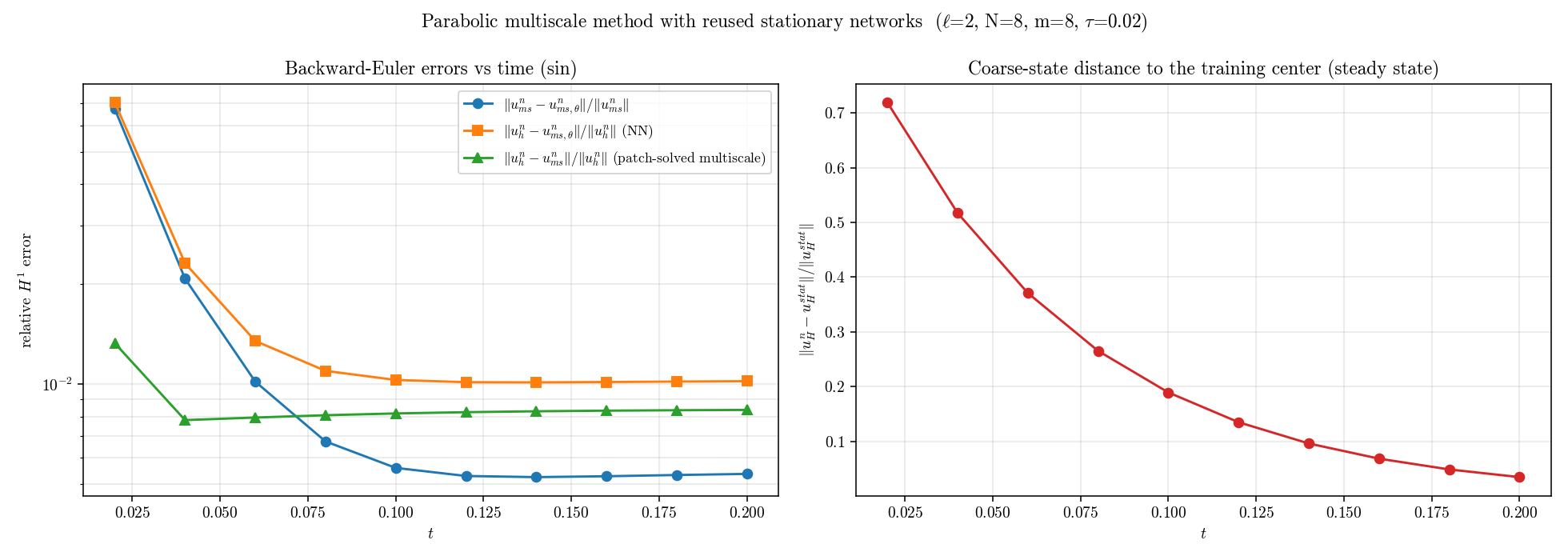}
\caption{\(\sin_\varepsilon\), backward Euler with \(\tau=0.02\) and restricted networks. Top: bump initial data, where the coarse state starts at distance \(0.16\) from steady state and the network error is uniform at about \(0.5\%\). Bottom: zero initial data, where the first step starts at distance \(0.72\) and the network error spikes to about \(6.7\%\) before recovering.}
\label{fig:numerics-parabolic-sin}
\end{figure}

\begin{figure}[htbp]
\centering
\includegraphics[width=0.95\textwidth]{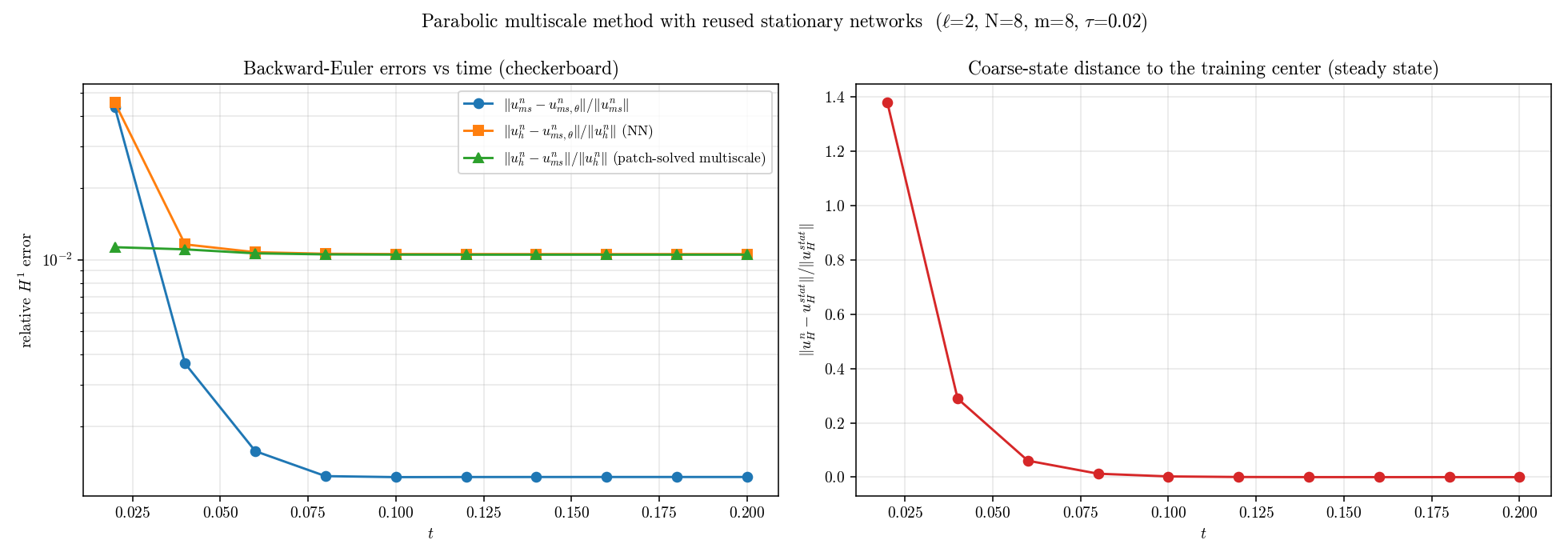}
\includegraphics[width=0.95\textwidth]{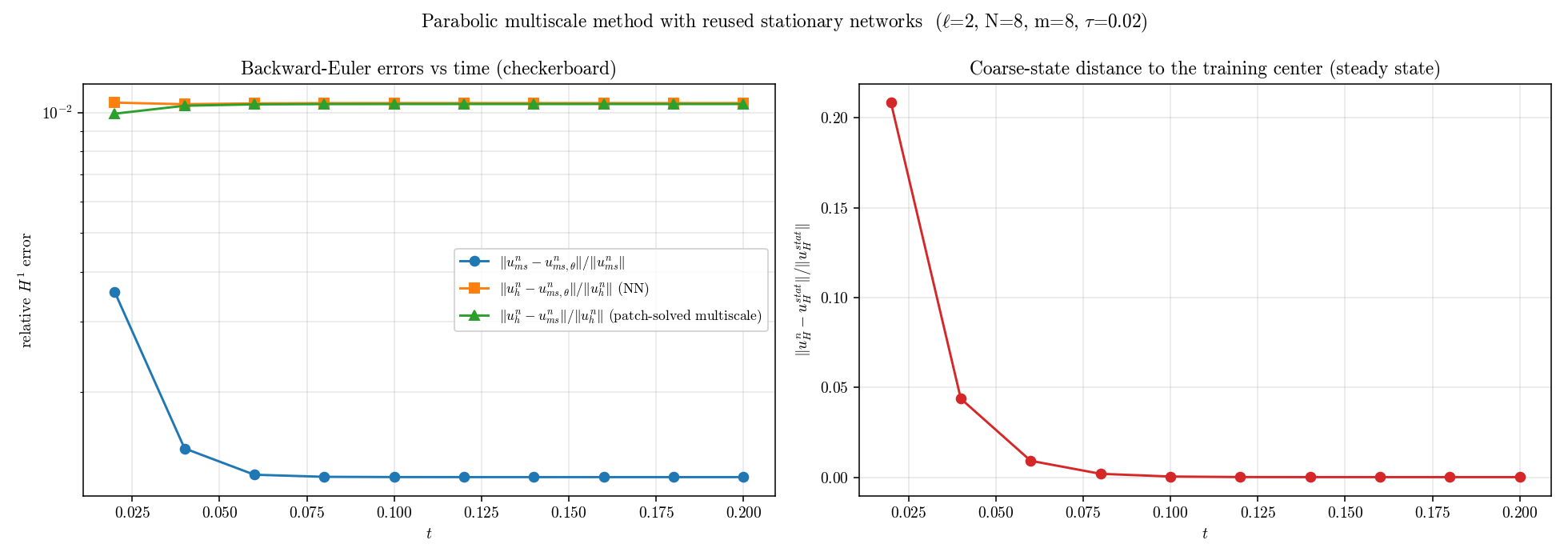}
\caption{Random checkerboard, backward Euler with \(\tau=0.02\) and restricted networks. Top: bump initial data, where the trajectory starts at distance \(1.38\) from steady state and the network error spikes to about \(4.4\%\) before recovering. Bottom: zero initial data, where the trajectory starts at distance \(0.21\) and the network error stays between about \(0.12\%\) and \(0.36\%\).}
\label{fig:numerics-parabolic-checkerboard}
\end{figure}

\paragraph{Potential of Learned Corrections.}
\label{par:numerics-parabolic-potential}
This experiment is small and we do not claim a wall-clock speed-up here. Its purpose is instead to demonstrate the potential of learning the corrections, and to indicate why one would consider learning in the first place. Because the reconstruction is stationary, the same networks are reused unchanged at every time step, so once trained they are amortized over the whole trajectory; the more time steps, the more favourable the offline--online trade-off becomes. Moreover, the offline stage can be made considerably more efficient than the present per-patch training: one may train a single network that is reused on every patch, or on classes of similar patches, rather than one network per patch. One may also replace the data-driven training by an unsupervised approach that minimizes the patch energy or residual directly, thereby requiring no offline data generation. These directions are left for future work.

\paragraph{Implementation Notes.}
\label{par:numerics-implementation-notes}
All experiments share the fixed fine reference mesh \(h=1/64\). The code uses \texttt{numpy}/\texttt{scipy} except for network training, which uses JAX/optax. The runtime network, including the forward pass and analytic tangent, is JAX-free.

%% file: sections/appendix_extended_algorithm_v0.tex

\section{Extended Patch-Solved Algorithm}
\label{sec:extended-patch-solved-algorithm}

This appendix gives the implementation-level patch-solved algorithm used for the stationary numerical experiments. It expands Algorithm \ref{alg:patch-solved-online-solve} by including the cross-mesh constraint, patch caches, and reuse of patch factorizations for tangent assembly. The listing is kept non-floating so that it can break across pages.

\paragraph{Patch-Solved Manifold Newton Method.}
\label{par:appendix-patch-solved-manifold-newton-method}
\noindent\textbf{Algorithm.} Tangent-space multiscale manifold method with outer manifold Newton and inner per-patch Newton.
\begin{algorithmic}[1]
\Statex \textbf{Precompute mesh, operators, and patches}
\State Build nested coarse and fine meshes, and the inclusion \(\iota_H\colon V_H\hookrightarrow V_h\)
\State Assemble the cross-mesh mass matrix \(M_{HF}\) representing the \(V_f=\ker\Pi_H\) constraint
\State Assemble the fine load vector \(b_h\)
\ForAll{patches \(i\in\mathcal I\)}
\State Build the \(\ell\)-ring patch \(\omega_i^\ell\) and the free (non-Dirichlet) node set \(\operatorname{free}_i\)
\State Restrict the constraint matrix to obtain \(B_i\), using the rows of all coarse vertices of \(\overline{\omega_i^\ell}\) interior to \(\Omega\) and the columns of the free fine nodes
\State Store the restricted inclusion, load, and partition-of-unity weights on \(\omega_i^\ell\)
\EndFor
\Statex
\Statex \textbf{Outer Newton iteration on the coarse manifold equation}
\State Set \(u_H\gets0\) on the interior coarse degrees of freedom, with boundary values pinned to zero
\Repeat
\State \(u_h\gets\Call{Reconstruct}{u_H}\)
\Comment{evaluate \(M_h^\ell(u_H)\) by inner patch Newton solves}
\State \(DM\gets\Call{AssembleTangent}{u_H}\)
\Comment{evaluate \(D M_h^\ell(u_H)\), reusing patch factorizations}
\State \(R_h\gets\operatorname{residual}(u_h)-b_h\)
\Comment{fine residual vector for \(A(u_h;\cdot)-L(\cdot)\)}
\State \(r_H\gets(DM^\top R_h)|_{\operatorname{int}}\)
\If{\(\|r_H\|_\infty<\texttt{tol}\)}
\State \Return \(u_H,u_h\)
\EndIf
\State \(A_h'\gets\operatorname{tangent\_matrix}(u_h)\)
\Comment{matrix representation of \(A'(u_h)\)}
\State \(J_H\gets(DM^\top A_h'DM)|_{\operatorname{int}}\)
\Comment{Galerkin Jacobian; the \(D^2M_h^\ell\) term is dropped}
\State Solve \(J_H\delta u_H=-r_H\) on the interior coarse degrees of freedom
\State Update \(u_H\gets u_H+\delta u_H\)
\Until{the outer iteration has converged}
\Statex
\Function{Reconstruct}{\(u_H\)}
\Comment{forward map \(M_h^\ell(u_H)\)}
\State \(u_h\gets\iota_Hu_H\)
\ForAll{patches \(i\in\mathcal I\)}
\State \(v_{f,i}\gets\Call{PatchEvaluate}{i,u_H}\)
\Comment{inner Newton solve for the local problem \eqref{eq:patch-correction-problem}}
\State Scatter-add \(\varphi_{H,i}v_{f,i}\) to the fine vector on \(\omega_i^\ell\)
\EndFor
\State Project the blended correction with \(I-\Pi_H\), as in \eqref{eq:localized-reconstruction-map}
\State \Return \(u_h\)
\EndFunction
\Statex
\Function{PatchEvaluate}{\(i,u_H\)}
\Comment{inner Newton solve on \(\omega_i^\ell\)}
\State \(g_i\gets\iota_Hu_H|_{\omega_i^\ell}\)
\State Initialize \(v\) from the patch cache or set \(v\gets0\)
\Repeat
\State \(u\gets g_i+v\)
\State \(r\gets(\operatorname{residual}_{\omega_i^\ell}(u)-b_i)|_{\operatorname{free}_i}\)
\If{\(\|r\|/\|r_0\|<\texttt{tol}\)}
\State \textbf{break}
\EndIf
\State \(K\gets\operatorname{tangent\_matrix}_{\omega_i^\ell}(u)\)
\State Solve the constrained patch system with block matrix \(\bigl[\begin{smallmatrix}K_{aa}&B_i^\top\\ B_i&0\end{smallmatrix}\bigr]\)
\State Update the free correction degrees of freedom \(v[\operatorname{free}_i]\gets v[\operatorname{free}_i]+dq\)
\Until{the local Newton iteration has converged}
\State \(K^\ast\gets\operatorname{tangent\_matrix}_{\omega_i^\ell}(g_i+v)\)
\State Factorize the final constrained patch matrix \(S_i\gets\operatorname{Factor}(K^\ast,B_i)\)
\State Cache \((v,K^\ast,S_i)\) for the current coarse state \(u_H\)
\State \Return \(v\)
\EndFunction
\Statex
\Function{AssembleTangent}{\(u_H\)}
\Comment{assemble \(D M_h^\ell(u_H)\)}
\State \(DM\gets\iota_H\)
\ForAll{patches \(i\in\mathcal I\)}
\State Retrieve \((v,K^\ast,S_i)\) from \textsc{PatchEvaluate}
\State Let \(Z_i\) be the coarse basis functions whose support intersects \(\omega_i^\ell\)
\State Solve the linearized patch problem \eqref{eq:localized-tangent-patch-problem} for all directions in \(Z_i\) using the cached factorization \(S_i\)
\State Scatter-add the partition-of-unity weighted tangent blocks to \(DM\)
\EndFor
\State Project the blended tangent correction with \(I-\Pi_H\), as in \eqref{eq:localized-tangent-reconstruction}
\State \Return \(DM\)
\EndFunction
\end{algorithmic}

%% file: sections/acknowledgements_v1.tex

\paragraph{Acknowledgements.}
\label{par:acknowledgements}
Mats G. Larson was supported in part by the Swedish Research Council, Grants Nos. 2021-04925 and 2025-05562, the Knut and Alice Wallenberg Foundation, Grant No. KAW 2025.0277, and the Swedish Research Programme Essence. Anna Persson acknowledges support from the Swedish Research Council Grant no. 2022-03543. 

\paragraph{Use of generative AI.}
Generative AI tools, including OpenAI's ChatGPT and Anthropic's Claude, were used to assist with language editing, \LaTeX{} formatting, figure-caption drafting, reference suggestions, and code generation. All AI-assisted material was critically reviewed, verified, and edited by the authors. The authors take full responsibility for the mathematical content, numerical results, and conclusions of the manuscript.

%% file: sections/references_v4.tex

%% file: sections/author_addresses_v1.tex

\bigskip
\bigskip
\noindent
\footnotesize \textbf{Authors' Addresses:}

\smallskip
\noindent
Mats G. Larson, \quad \hfill Department of Mathematics and Mathematical Statistics, Ume{\aa} University, Sweden\\
\texttt{mats.larson@umu.se}

\smallskip
\noindent
Anna Persson, \quad \hfill Department of Information Technology, Uppsala University, Sweden\\
\texttt{apersson@it.uu.se}